\documentclass{amsart}

\usepackage{amsthm,amsfonts,amsmath,amssymb,latexsym,epsfig,mathrsfs,yfonts,marvosym}
\usepackage[usenames]{color}

\DeclareMathAlphabet\oldmathcal{OMS}        {cmsy}{b}{n}
\SetMathAlphabet    \oldmathcal{normal}{OMS}{cmsy}{m}{n}
\DeclareMathAlphabet\oldmathbcal{OMS}       {cmsy}{b}{n}
 
\usepackage{eucal}

\usepackage{graphicx}
\usepackage[all]{xy}
\usepackage{epsfig}
\usepackage{layout}

\newtheorem{theorem}{Theorem}[section]
\newtheorem{lemma}[theorem]{Lemma}
\newtheorem{proposition}[theorem]{Proposition}
\newtheorem{corollary}[theorem]{Corollary}
\newtheorem{definition}[theorem]{Definition}
\newtheorem{definition/proposition}[theorem]{Definition/Proposition}

\newenvironment{example}{\medskip \refstepcounter{theorem}
\noindent  {\bf Example \thetheorem}.\rm}{\,}
\newenvironment{remark}{\medskip \refstepcounter{theorem}
\noindent  {\bf Remark \thetheorem}.\rm}{\,}
\newtheorem{ack}{Acknowledgments} 


\def\BOne{{\mathchoice {\rm 1\mskip-4mu l} {\rm 1\mskip-4mu l}
                          {\rm 1\mskip-4.5mu l} {\rm 1\mskip-5mu l}}}

\def\fract#1#2{\raise4pt\hbox{$ #1 \atop #2 $}}

\def\lcm{{\rm lcm}}

\def\bbc{{\mathbb C}}

\def\bbp{{\mathbb P}}
\def\bbq{{\mathbb Q}}
\def\bbr{{\mathbb R}}

\def\bbz{{\mathbb Z}}

\def\gra{\alpha}

\def\gre{\epsilon}

\def\grg{\gamma}
\def\gri{\iota}
\def\grk{\kappa}
\def\grl{\lambda}

\def\gro{\omega}

\def\grr{\rho}

\def\grt{\tau}

\def\grz{\zeta}
\def\grD{\Delta}

\def\grF{\Phi}
\def\grG{\Gamma}

\def\bfl{{\bf l}}

\def\bfp{{\bf p}}

\def\bfu{{\bf u}}

\def\bfw{{\bf w}}
\def\bfx{{\bf x}}

\def\bfR{{\boldsymbol R}}
\def\bfS{{\boldsymbol S}}

\def\cala{{\mathcal A}}
\def\calb{{\mathcal B}}
\def\calc{{\mathcal C}}
\def\calo{{\mathcal O}}
\def\calu{{\mathcal U}}
\def\cald{{\mathcal D}}

\def\calf{{\mathcal F}}

\def\cali{{\mathcal I}}
\def\calj{{\mathcal J}}
\def\calk{{\mathcal K}}

\def\calm{{\mathcal M}}
\def\caln{{\mathcal N}}
\def\calo{{\mathcal O}}

\def\calr{{\mathcal R}}
\def\cals{{\oldmathcal S}}

\def\calu{{\mathcal U}}
\def\calv{{\mathcal V}}
\def\calw{{\mathcal W}}
\def\caly{{\mathcal Y}}
\def\calz{{\oldmathcal Z}}
\def\calx{{\mathcal X}}

\def\la#1{\hbox to #1pc{\leftarrowfill}}
\def\ra#1{\hbox to #1pc{\rightarrowfill}}

\def\ga{{\mathfrak a}}

\def\gc{{\mathfrak c}}

\def\gf{{\mathfrak f}}
\def\gg{{\mathfrak g}}
\def\gh{{\mathfrak h}}
\def\gi{{\mathfrak i}}

\def\gl{{\mathfrak l}}
\def\gm{{\mathfrak m}}
\def\gn{{\mathfrak n}}
\def\go{{\mathfrak o}}

\def\gr{{\mathfrak r}}
\def\gs{{\mathfrak s}}
\def\gt{{\mathfrak t}}
\def\gu{{\mathfrak u}}

\def\gy{{\mathfrak y}}
\def\gz{{\mathfrak z}}

\def\gA{{\mathfrak A}}
\def\gB{{\mathfrak B}}
\def\gC{{\mathfrak C}}
\def\gD{{\mathfrak D}}

\def\gF{{\mathfrak F}}
\def\gG{{\mathfrak G}}
\def\gH{{\mathfrak H}}

\def\gQ{{\mathfrak Q}}
\def\gR{{\mathfrak R}}
\def\gS{{\mathfrak S}}
\def\gT{{\mathfrak T}}

\def\X{\frak{X}}

\def\tU{\tilde{{U}}}
\def\tV{\tilde{{V}}}

\def\tX{\tilde{{X}}}
\def\tf{\tilde{{f}}}

\def\tx{\tilde{{x}}}

\def\tx{\tilde{x}}

\def\bz{\bar{z}}

\def\cJ{\hat{J}}

\def\tcald{\tilde{\cald}}

\def\tR{\tilde{R}}
\def\tV{\tilde{V}}

\def\hook{\mathbin{\hbox to 6pt{%
                 \vrule height0.4pt width5pt depth0pt
                 \kern-.4pt
                 \vrule height6pt width0.4pt depth0pt\hss}}}

\begin{document}
\bibliographystyle{amsalpha}

\title{Maximal Tori in Contactomorphism Groups}\thanks{During the beginning of this work the author was partially supported by NSF grant DMS-0504367.}

\author{Charles P. Boyer}
\address{Department of Mathematics and Statistics,
University of New Mexico, Albuquerque, NM 87131.}

\email{cboyer@math.unm.edu} 

\keywords{Contactomorphism group, tori, Reeb type, Sasakian and K-contact structures}

\subjclass[2000]{Primary: 53D35; Secondary:  53C25}

\begin{abstract}
I describe a general scheme which associates conjugacy classes of tori in the contactomorphism group to transverse almost complex structures on a compact contact manifold. Moreover, to tori of Reeb type whose Lie algebra contains a Reeb vector field one can associate a Sasaki cone. Thus, for contact structures $\cald$ of K-contact type one obtains a configuration of Sasaki cones called a {\it bouquet} such that each Sasaki cone is associated with a conjugacy class of tori of Reeb type. 
\end{abstract}

\maketitle

\tableofcontents

\section{Introduction}

The main purpose of this paper is to study the relationship between compatible almost complex structures on a contact manifold and conjugacy classes of maximal tori in the contactomorphism group. 
It draws its inspiration from the work of Karshon \cite{Kar03} and Lerman \cite{Ler03b}. The former gives a count of the conjugacy classes of maximal tori in the symplectomorphism group of $S^2\times S^2$ with symplectic structures $\gro_{k_1,k_2}$ depending on a pair of positive integers, and illustrates the close relation with the well-known even Hirzebruch surfaces. A similar result is given in \cite{Kar03} for symplectic structures on the non-trivial $S^2$-bundle over $S^2$ which relate to the odd Hirzebruch surfaces. This work in turn was motivated by Gromov's seminal paper \cite{Gro85} where it is observed that the topology of the  symplectomorphism groups on $S^2\times S^2$ changes as the pair of integers change.

In \cite{Ler03b} Lerman uses Karshon's results to give a lower bound on the number of conjugacy classes of maximal tori in certain contact structures on $S^2\times S^3$. While these contact structures are actually quite special, they are Sasakian, Lerman had previously shown in \cite{Ler01} that a count can be made in other, non-Sasakian cases. In particular, he showed that the overtwisted contact structure on $S^3$ admits a countable infinity of conjugacy classes of maximal tori. This has led the present author to try to construct a general theory relating compatible transverse almost complex structures on contact manifolds to conjugacy classes of maximal tori. This has become possible due to a recent result of Frances \cite{Fra07} that generalizes to the `almost' category a well-known result of Lee and Schoen \cite{Lee96,Sch95} that on a compact manifold $M$ with a strictly pseudoconvex CR structure, the group of CR transformations is compact except when $M$ is an odd dimensional sphere $S^{2n+1}$ with its standard CR structure (see Theorem \ref{Schthm} below). The latter is well understood, its group of CR transformations being $SU(n+1,1)$ \cite{Web77}.

In this paper our contact manifolds are always compact. At this stage our general framework does not work in the non-compact case where little seems to be known. One exception seems to be the Heisenberg group \cite{Boy09}. Most of the results and examples in this paper concern contact structures of K-contact or Sasaki type, since these are generally by far the most tractible. Such contact structures have contact 1-forms that fibre in the orbifold sense over a compact symplectic orbifold with cyclic local uniformizing groups. Thus, it is necessary to generalize certain results that are known for symplectic manifolds and their symplectomorphism groups to the case of symplectic orbifolds. In particular, I generalize to any quasi-regular contact structure a result of Banyaga \cite{Ban78} that says that for a regular contact structure the subgroup of the contactomorphism group that leaves invariant a regular contact form is a central $S^1$ extension of the group of Hamiltonian isotopies of the base symplectic manifold, and subsequently generalize a result of Lerman \cite{Ler02b} saying that a maximal torus of the group of Hamiltonian isotopies induces a unique maximal torus in the corresponding contactomorphism group.

The paper is organized as follows: Section \ref{basdef} gives a review of the essential structures that we are dealing with and their interrelation. Section \ref{congroupsect} discusses the contactomorphism group and some important subgroups. In particular, it is shown that the subgroup leaving a contact 1-form invariant is a Fr\'echet Lie subgroup of the full contactomorphism group. Section \ref{secalmostcomp} defines the fundamental map between compatible almost complex structures and conjugacy classes of maximal tori and shows that it is surjective. It also defines the Sasaki bouquet which gives a natural configuration of Sasakian structures associated with certain contact structures. In Section \ref{symgroup} I generalize known results about the symplectomorphism group in the category of manifolds to that of cyclic orbifolds, and in Section \ref{consymgr}  I give three main theorems relating the Hamiltonian symplectomorphism 
group to the contactomorphism group in the quasi-regular case. Section \ref{torconsect} begins with a review of toric contact structures, and then proves that the fundamental map relating transverse almost complex structures to conjugacy classes of maximal tori is bijective in the toric case when restricting to toric contact structures of Reeb type and maximal tori of Reeb type. Section \ref{torconsect} ends by describing many examples in dimensions three and five, some of which give non-trivial Sasaki bouquets; while Section \ref{moreex} mainly presents results in higher dimensions and with non-toric contact structures. More specific examples of Sasaki bouquets have now been given in dimension five \cite{BoTo11,BoTo12}.

\begin{ack}
{\rm At various times during this work I have benefited from conversations and/or emails from V. Apostolov, A. Banyaga, A. Buium, P. Gauduchon, P. Michor, J. Pati, Y.-S. Poon, C. T{\o}nnesen-Friedman, and D. Vassilev.}
\end{ack}

\section{Basic Structures}\label{basdef}

\subsection{Contact Structures}
Recall that a {\it contact structure} on a $2n+1$ dimensional smooth manifold $M$ is a maximally non-integrable codimension one subbundle $\cald$ of the tangent bundle $TM.$ We shall assume that $M$ is orientable with a fixed orientation unless stated to the contrary. We can think of the subbundle $\cald$ as being defined by an equivalence class of (contact) 1-forms $\eta$ satisfying $\eta\wedge (d\eta)^n\neq 0$ where $\eta'\sim \eta$ if there is a nowhere vanishing function $f\in C^\infty(M)$ such that $\eta'=f\eta$ and $\cald=\ker\eta.$ We shall also choose a co-orientation by choosing a 1-form $\eta$ and then restricting the functions $f$ to strictly positive functions $C^\infty(M)^+.$ With the co-orientation fixed we denote by $\gC^+(\cald)$ the set of contact 1-forms representing $\cald$ with its orientation fixed. Choosing a 1-form $\eta\in \gC^+(\cald)$ gives an identification of 
$\gC^+(\cald)$ with $C^\infty(M)^+.$ Note that when $n$ is even changing $\eta$ to $-\eta$ just reverses the orientation of $M$; however, when $n$ is odd it does not effect the orientation of $M$.

Given a contact 1-form $\eta$ there is a unique vector field $\xi$ called the {\it Reeb vector field} that satisfies $\eta(\xi)=1$ and $\xi\hook d\eta=0.$ Thus, there is a bijection between the set $\gC^+(\cald)$ and a subset $\calr^+(\cald)$ of the Lie algebra of infinitesimal contact transformations consisting of the Reeb vector fields of any contact form representing the oriented contact structure $\cald$. It is easy to see \cite{Boy08} that when $M$ is compact $\calr^+(\cald)$ is an open convex cone in $\X(M)$ in the subspace topology, that I call the {\it Reeb cone}. Now $\xi$ generates a trivial real line bundle $L_\xi$ and the {\it characteristic foliation} $\calf_\xi.$ This provides a splitting of the tangent bundle $TM=\cald \oplus L_{\xi}$ and fixes a symplectic structure $d\eta|_\cald$ in the vector bundle $\cald.$ Changing the contact form $\eta\mapsto f\eta$ changes the symplectic structure by a conformal factor, $d\eta|_\cald\mapsto d\eta'|_\cald= fd\eta|_\cald,$ and it also changes the splitting $\cald\oplus L_\xi\mapsto \cald\oplus L_{\xi'}$.

Recall that a complex structure $J$ on a vector bundle $\cald$ is a smooth endomorphism such that $J^2=-\BOne$.
We are interested in the set of all complex structures $J$ on $\cald$ that satisfy a certain compatibility condition. 
 
\begin{definition}\label{compatibleJ}
Choose a smooth contact form $\eta\in\gC^+(\cald)$ representing $\cald$. We say that a complex structure $J$ is {\bf compatible} with $\cald$ if for any sections $X,Y$ of $\cald$ we have\footnote{I use the convention used in \cite{BG05} which is opposite to the convention most frequently used in K\"ahler geometry.}
$$d\eta(JX,JY)=d\eta(X,Y), \qquad d\eta(JX,Y)>0.$$
We denote by $\calj(\cald)$ the set of all complex structures compatible with $\cald.$
\end{definition}

This definition is well-defined since

\begin{proposition}\label{compwd}
The definition of compatible in Definition \ref{compatibleJ} is independent of the choice of 1-form $\eta$ representing $\cald$ as an oriented contact structure.
\end{proposition}

\begin{proof}
Let $\eta'$ be any other contact form in $\gC^+(\cald)$. Then $\eta'=f\eta$ for some $f\in C^\infty(M)^+.$ So
$$d\eta'(JX,JY)=fd\eta(JX,JY)=fd\eta(X,Y)=d\eta'(X,Y)$$
since $X,Y$ are sections of $\cald.$
\end{proof}

\subsection{Almost CR Structures and CR Structures}\label{almostCR}
Let $\cald$ be a codimension one subbundle of the tangent $TM$ of a $2n+1$ dimensional smooth manifold $M$. A (codimension one) almost CR structure is given by a splitting of the complexified bundle $\cald\otimes \bbc=\cald^{1,0} +\cald^{0,1}$ such that $\cald^{1,0}\cap\cald^{0,1}=\{0\}$ and $\cald^{0,1}=\overline{\cald^{1,0}},$ where the bar denotes complex conjugation \cite{DrTo06}. The almost CR structure is said to be {\it (formally) integrable}\footnote{The word ``formally'' appears in parenthesis since for general almost CR structures this definition of integrablility is not equivalent to the existence of a transverse complex coordinate system. However, it is for those related to contact structures, so I will not use the terminology ``formally integrable'', but just ``integrable''.} if $[\grG(\cald^{1,0}),\grG(\cald^{1,0})]\subset \grG(\cald^{1,0}).$

An alternative definition of an almost CR structure, and the point of view adopted here, can be given by the existence of a codimension one vector bundle $\cald$ of $TM$ together with a smooth endomorphism $J$ of $\cald$ such that $J^2=-\BOne$. Then the almost CR structure is said to be {\it integrable} if it satisfies two conditions:
\begin{enumerate}
\item $[JX,Y]+[X,JY]\in \grG(\cald)$ whenever $X,Y\in \grG(\cald).$
\item $N(X,Y)=[JX,JY] -[X,Y]-J[JX,Y]-J[X,JY]=0.$
\end{enumerate}
A less well-known definition that is important for us is: an almost CR structure $(\cald,J)$ is said to be {\it partially integrable} if condition (1) holds. We can rephrase condition (1) as follows: let $\cald$ be given as the kernel of a 1-form, i.e. $\cald=\ker\eta,$ and define $g_\cald(X,Y)=d\eta(JX,Y)$. The 2-form $d\eta$ is called the {\it Levi form} of the CR structure. One easily sees that condition (1) is equivalent to $g_\cald(X,Y)=g_\cald(Y,X).$ Moreover, since $J^2=-\BOne$ we see that (1) implies $g_\cald(JX,JY)=g_\cald(X,Y),$ that is, a partially integrable CR structure gives rise to a symmetric 2-tensor field $g_\cald$ that is invariant under $J.$  We are interested in the case that $g_\cald$ (or $-g_\cald$) defines an Hermitian metric on the vector bundle $\cald.$ When this happens the CR structure is called {\it strictly pseudoconvex}. Summarizing we have

\begin{proposition}\label{CRcon}
Let $\cald$ be a contact structure on a smooth orientable manifold $M$, and choose a compatible almost complex structure $J\in\calj(\cald).$ Then the pair $(\cald,J)$ defines a partially integrable strictly pseudoconvex almost CR structure.
\end{proposition}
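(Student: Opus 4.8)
The plan is to verify directly that the pair $(\cald,J)$ satisfies the definition of a partially integrable strictly pseudoconvex almost CR structure as set out in the preceding subsection. Since $J\in\calj(\cald)$ is by hypothesis a complex structure on $\cald$ (so $J^2=-\BOne$), the data $(\cald,J)$ already constitute an almost CR structure in the endomorphism formulation. What remains is to check the partial integrability condition (1) and the strict pseudoconvexity condition, and these will follow almost immediately from the two compatibility inequalities in Definition \ref{compatibleJ}, namely $d\eta(JX,JY)=d\eta(X,Y)$ and $d\eta(JX,Y)>0$.

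First I would fix a contact form $\eta\in\gC^+(\cald)$ representing $\cald$, which is legitimate since Proposition \ref{compwd} guarantees that compatibility is independent of this choice. I would then set $g_\cald(X,Y)=d\eta(JX,Y)$ as in the definition of the Levi form, and recall that partial integrability (condition (1)) was shown there to be equivalent to the symmetry $g_\cald(X,Y)=g_\cald(Y,X)$. So the task reduces to establishing this symmetry. Here I would use the $J$-invariance of $d\eta$: writing $g_\cald(Y,X)=d\eta(JY,X)$ and applying $d\eta(JZ,JW)=d\eta(Z,W)$ with $Z=Y$, $W=X$ after inserting $J$ appropriately, together with the antisymmetry of the $2$-form $d\eta$ and $J^2=-\BOne$, one computes that $d\eta(JX,Y)=d\eta(JY,X)$. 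Concretely, $d\eta(JX,Y)=d\eta(JX, -J^2 Y)=d\eta(J(JX),J(-JY))\cdot(\text{sign bookkeeping})$; I would carry out this short substitution to land on $g_\cald(X,Y)=g_\cald(Y,X)$, which is condition (1).

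Next I would address strict pseudoconvexity. The second compatibility condition $d\eta(JX,Y)>0$ (for $X=Y$ nonzero, read as positivity of $g_\cald(X,X)$) says precisely that $g_\cald$ is positive definite on $\cald$. Combined with the symmetry just proved and the $J$-invariance $g_\cald(JX,JY)=g_\cald(X,Y)$ (which the preceding subsection notes follows from (1) and $J^2=-\BOne$), this means $g_\cald$ is a $J$-invariant, symmetric, positive-definite bilinear form, i.e. a Hermitian metric on the complex vector bundle $(\cald,J)$. By the definition recalled just before the proposition, the existence of such a metric is exactly the assertion that the CR structure is strictly pseudoconvex.

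I do not expect a genuine obstacle here: the proposition is essentially a translation, repackaging Definition \ref{compatibleJ} into the CR-theoretic language of the subsection on almost CR structures. The only point requiring a small amount of care is the sign and index bookkeeping in deducing the symmetry of $g_\cald$ from the $J$-invariance and antisymmetry of $d\eta$ — one must track where the two minus signs (from $J^2=-\BOne$ and from swapping the two slots of $d\eta$) cancel. Beyond that, strict pseudoconvexity is immediate from the positivity clause, so the proof will be a few lines of direct computation.
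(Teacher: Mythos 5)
Your proposal is correct and follows exactly the route the paper intends: the paper gives no separate proof, since Proposition \ref{CRcon} merely ``summarizes'' the preceding discussion, where partial integrability is identified with symmetry of the Levi form $g_\cald(X,Y)=d\eta(JX,Y)$ and strict pseudoconvexity with its positive definiteness. Your computation $d\eta(JX,Y)=d\eta(J(JX),JY)=-d\eta(X,JY)=d\eta(JY,X)$ is precisely the ``one easily sees'' step the paper leaves implicit, so your write-up fills in the same argument rather than offering a different one.
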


Of particular interest is the case when the almost CR structure is integrable, that is both conditions (1) and (2) hold. We denote by $\calj_I(\cald)$ the subset of $\calj(\cald)$ consisting of integrable almost CR structures.

\subsection{Contact Metric Structures}
For contact structures we can think of the choice of $\eta\in \gC^+(\cald)$ as {\it polarizing} the almost CR structure $(\cald,J)$. Accordingly we refer to the pair $(\eta,J)$ as a {\it polarized almost CR structure}. 
It is convenient to extend the almost complex structure $J$ to an endomorphism $\Phi$ by setting $J=\Phi|_\cald,~\Phi\xi=0.$ This extension depends on the choice of $\eta\in\gC^+(\cald),$ and we can think of the pair $(\eta,\Phi)$ as a polarized almost CR structure. We say that a Riemannian metric $g$ is {\it compatible} with $(\eta,\Phi)$, or equivalently with $(\eta,J)$, if 
$$g(\Phi X,\Phi Y)=g(X,Y) -\eta(X)\eta(Y)$$
for any vector fields $X,Y.$ Any such metric satisfies $g(\xi,\xi)=1$ and $g(X,\xi)=0$ for $X$ a section of $\cald.$ That is, for any metric $g$ that is compatible with $(\eta,\Phi)$ the Reeb vector field has norm one and is orthogonal to $\cald.$ So there is a 1-1 correspondence between Riemannian metrics that are compatible with $(\eta,\Phi)$ and Hermitian metrics on the vector bundle $\cald$ that are compatible with $J.$ Note that $g_\cald(X,Y)=d\eta(JX,Y)$ defines a natural metric in the vector bunde $\cald$ where here $X,Y$ are sections of $\cald$. So there is a canonical Riemannian metric compatible with the pair $(\eta,\Phi),$ namely  
\begin{equation}\label{conmet}
g=g_\cald\oplus \eta\otimes \eta=d\eta\circ(\Phi \otimes \BOne)+\eta\otimes \eta.
\end{equation}
We call the quadruple $(\eta,\xi,\Phi,g)$ where $g$ has the form (\ref{conmet}) a {\it contact metric structure}, and denote by $\calc\calm(\cald)$ the set of all such contact metric structures whose underlying contact structure is $\cald.$ From this point of view we have
\begin{proposition}\label{canconmet}
Fixing $\eta$ there is a 1-1 correspondence between compatible almost complex structures on $\cald$ and canonical contact metrics of the form {\rm (\ref{conmet})}.
\end{proposition}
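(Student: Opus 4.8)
The plan is to show that the assignment $J \mapsto g$, where $g$ is built from the extension $\Phi$ of $J$ via formula (\ref{conmet}), is a well-defined bijection from $\calj(\cald)$ onto the set of canonical contact metrics with underlying form $\eta$, the inverse being extracted from the nondegeneracy of $d\eta$ on $\cald$.

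First I would check that, for a fixed $J \in \calj(\cald)$ with extension $\Phi$ (so $\Phi|_\cald = J$ and $\Phi\xi = 0$), the tensor $g$ of (\ref{conmet}) is a genuine Riemannian metric. Using the splitting $TM = \cald \oplus L_\xi$, this reduces to showing that $g_\cald(X,Y) = d\eta(JX,Y)$ is a symmetric positive-definite form on $\cald$, since the summand $\eta \otimes \eta$ contributes a positive-definite form on $L_\xi$ with $g(\xi,\xi) = 1$ and $g(X,\xi) = 0$ for $X$ a section of $\cald$. Symmetry is the decisive point: substituting $JY$ for $Y$ in the first compatibility identity $d\eta(JX,JY) = d\eta(X,Y)$ and using $J^2 = -\BOne$ gives $d\eta(JX,Y) = -d\eta(X,JY) = d\eta(JY,X)$, that is $g_\cald(X,Y) = g_\cald(Y,X)$; this is precisely the partial integrability already recorded in Proposition \ref{CRcon}. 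Positive-definiteness is then exactly the second compatibility condition $d\eta(JX,X) > 0$ for $X \neq 0$. That the two expressions in (\ref{conmet}) agree follows because $\Phi\xi = 0$ and $\xi \hook d\eta = 0$ annihilate everything outside $\cald$.

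Next I would verify the compatibility relation $g(\Phi X, \Phi Y) = g(X,Y) - \eta(X)\eta(Y)$, which identifies $g$ as a canonical contact metric: expanding $X, Y$ into $\cald$- and $L_\xi$-components and invoking the $J$-invariance $g_\cald(JX,JY) = g_\cald(X,Y)$ (itself an immediate consequence of the first compatibility identity) reduces both sides to $g_\cald(X^\cald, Y^\cald)$. For bijectivity, injectivity follows since $g_\cald$ determines $J$: if $J_1, J_2$ yield the same $g$ then $d\eta(J_1 X, Y) = d\eta(J_2 X, Y)$ for all sections $X, Y$ of $\cald$, and the nondegeneracy of the symplectic form $d\eta|_\cald$ forces $J_1 = J_2$. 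Surjectivity is essentially the definition of the target set, but I would make the inverse explicit: from a canonical contact metric $g$ one recovers $J$ as the unique endomorphism of $\cald$ with $d\eta(JX,Y) = g(X,Y)$ for sections $X,Y$ of $\cald$, again by nondegeneracy of $d\eta|_\cald$, and then $\Phi$ by $\Phi|_\cald = J$, $\Phi\xi = 0$.

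The main obstacle is conceptual rather than computational: the entire content is to recognize that the two compatibility conditions of Definition \ref{compatibleJ} are exactly what guarantee, respectively, the symmetry and the positive-definiteness of $g_\cald$, so that (\ref{conmet}) lands among Riemannian metrics and not merely among symmetric or nondegenerate $2$-tensors. Every other step is pointwise linear algebra on the fibres of $\cald$, and no dependence-on-$\eta$ subtleties arise since $\eta$ is held fixed (cf. Proposition \ref{compwd}).
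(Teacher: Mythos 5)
Your proof is correct and follows essentially the same route as the paper: the paper states Proposition \ref{canconmet} with no separate proof, relying on the immediately preceding discussion that formula (\ref{conmet}) assigns to each compatible $J$ a canonical Riemannian metric and that $J$ is recovered from $g|_\cald=d\eta\circ(J\otimes\BOne)$ by nondegeneracy of $d\eta$ on $\cald$. Your write-up simply makes explicit the pointwise linear algebra the paper leaves implicit, correctly identifying that the two compatibility conditions of Definition \ref{compatibleJ} yield, respectively, the symmetry and the positive-definiteness of $g_\cald$.
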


\subsection{K-contact and Sasakian Structures}
The Reeb vector field $\xi$ leaves invariant the contact 1-form $\eta;$ hence, it leaves the contact structure $\cald$ invariant. However, generally it does not leave the almost CR structure $J$ invariant; when it does, the contact structure is called {\it K-contact}. In this case the metric $g$ given by Equation (\ref{conmet}) is called {\it bundle-like}. In fact, $g$ in Equation (\ref{conmet}) is bundle-like and gives $\calf_\xi$ the structure of a Riemannian foliation if and only if the almost CR structure is invariant under the flow of the Reeb vector field, or equivalently the metric is invariant under the Reeb flow. In equations this is equivalent to $\pounds_\xi\Phi=0=\pounds_\xi g.$ So in the K-contact case $\xi$ is a Killing vector field giving rise to the name K-contact. 

\begin{definition}\label{kcontype}
We say that a contact structure $\cald$ on $M$ is of {\bf K-contact type} if there is a 1-form $\eta$ in the contact structure and a choice of almost complex structure $J$ such that $\cals=(\xi,\eta,\Phi,g)$ is K-contact. In addition, if the underlying almost CR structure $(\cald,J)$ is integrable we say that $\cald$ is of {\bf Sasaki type}. 
\end{definition}

K-contact structures are closely related to the quasi-regularity of the characteristic foliation $\calf_\xi$ of a strict contact structure. Recall that a strict contact structure $(M,\eta)$ is a contact structure $(M,\cald)$ with a fixed contact form $\eta$ satisfying $\ker\eta=\cald$, and it is said to be {\it quasi-regular}\footnote{Quasi-regular is sometimes called {\it almost regular} in the literature} if there is a positive integer $k$ such that each point has a
foliated coordinate chart $(U,x)$ such that each leaf of
$\calf_\xi$ passes through $U$ at most $k$ times. If $k=1$ then
the foliation is called {\it regular}. A strict contact structure that is not quasi-regular is called {\it irregular}. I often apply the definitions of K-contact and quasi-regular, etc.  to the Reeb field $\xi$ or the contact form $\eta.$  We have \cite{BG05}

\begin{proposition}\label{qkcon}
Let $(M,\eta)$ be a quasi-regular strict contact manifold such that the characteristic foliation $\calf_\xi$ has all compact leaves. Then then there exists a choice of Riemannian metric $g$ (and thus $\Phi$) such that $(\xi,\eta,\Phi,g)$ is K-contact. In particular, a quasi-regular contact structure on a compact manifold has a compatible K-contact structure.
\end{proposition}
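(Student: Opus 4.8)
The plan is to reduce the construction of a K-contact metric to an averaging argument over the circle action generated by the Reeb flow, followed by the canonical polar-decomposition construction of a compatible almost complex structure. The key reduction is that, by Proposition \ref{canconmet}, it suffices to produce a single $J\in\calj(\cald)$ that is invariant under the flow of $\xi$: the associated canonical metric $g$ of the form (\ref{conmet}) is built from $d\eta$, $\eta$ and $J$ alone, so if $\pounds_\xi J=0$ and $\pounds_\xi\eta=0$ then $\pounds_\xi\Phi=0=\pounds_\xi g$, and $(\xi,\eta,\Phi,g)$ is K-contact.

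First I would observe that, since every leaf of $\calf_\xi$ is compact and the foliation is quasi-regular, every orbit of the Reeb flow $\{\phi_t\}$ is a closed circle and the periods are locally bounded (each leaf meets a foliated chart at most $k$ times). This is exactly the situation in which the flow of $\xi$ integrates to a locally free action of a circle $S^1=\bbr/T\bbr$ on $M$, with generic orbits of period $T$ and shorter orbits over the singular locus wrapping around an integer number of times (the orbifold Boothby--Wang picture). Because $\pounds_\xi\eta=0$, this $S^1$-action preserves $\eta$, hence also $d\eta$ and $\cald=\ker\eta$. Next I would choose an arbitrary $J_0\in\calj(\cald)$ — one exists because the bundle of $d\eta|_\cald$-compatible complex structures on the symplectic vector bundle $\cald$ has contractible, hence nonempty, fibres — and let $g_0$ be its canonical contact metric (\ref{conmet}). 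Averaging over the circle yields
$$g_1=\frac{1}{T}\int_0^T\phi_t^*g_0\,dt,$$
a smooth Riemannian metric with $\pounds_\xi g_1=0$. Since $\phi_{t*}\xi=\xi$ and $\phi_t$ preserves $\cald$, one checks $g_1(\xi,\xi)=1$ and $g_1(\xi,X)=0$ for $X\in\grG(\cald)$, so $g_1=g_1|_\cald\oplus\eta\otimes\eta$ with $g_1|_\cald$ an $S^1$-invariant bundle metric, while $d\eta|_\cald$ remains $S^1$-invariant and nondegenerate.

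Finally I would apply the standard polar decomposition to the pair $(g_1|_\cald,d\eta|_\cald)$. Defining the bundle endomorphism $A$ of $\cald$ by $d\eta(X,Y)=g_1(X,AY)$, the operator $A$ is $g_1$-skew and invertible, so $-A^2$ is symmetric positive definite and $J:=A(\sqrt{-A^2})^{-1}$ satisfies $J^2=-\BOne$. A short computation shows $J$ is $g_1$-orthogonal, whence $d\eta(JX,Y)=g_1(\sqrt{-A^2}\,X,Y)$ is symmetric and positive definite, so $J\in\calj(\cald)$ with the correct sign for the fixed co-orientation; indeed, running the construction on $g_0$ returns $A=J_0$ and hence $J=J_0$, confirming the sign. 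Because $A$ is built functorially from the two $S^1$-invariant tensors $g_1$ and $d\eta$, it is $S^1$-invariant, and therefore so are $\sqrt{-A^2}$ and $J$; that is, $\pounds_\xi J=0$. Extending $J$ to $\Phi$ by $\Phi\xi=0$ and forming the canonical metric $g$ of (\ref{conmet}) then gives, by Proposition \ref{canconmet}, a contact metric structure, which is K-contact since $J$, $\eta$ and $\xi$ are all Reeb-invariant.

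The main obstacle is the first step: passing from ``all leaves compact and quasi-regular'' to an honest smooth circle action generated by $\xi$. It is this global $S^1$, with a single common period $T$, that makes the averaged tensor $g_1$ smooth across the shorter singular orbits; averaging leaf-by-leaf against the possibly discontinuous period function would not produce a smooth metric. Granting the circle action, the remaining steps are the routine equivariant averaging and the functorial polar decomposition. For the final assertion, on a compact $M$ quasi-regularity already forces every leaf of $\calf_\xi$ to be compact, so that case is an immediate consequence of the first.
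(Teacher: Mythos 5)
Your steps two through four are correct as written: the equivariant averaging of $g_0$, the polar decomposition $J=A(\sqrt{-A^2})^{-1}$ with its invariance and compatibility checks, and the final appeal to Proposition \ref{canconmet} all work, and this averaging strategy is exactly the one the paper itself uses in the nearby Proposition \ref{reebkcon} and Theorem \ref{ReebKcon} (the paper gives no proof of Proposition \ref{qkcon} itself, deferring to \cite{BG05}). The genuine gap is precisely the step you single out and then grant yourself: the existence of the circle action generated by $\xi$.

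The justification you offer for that step --- all orbits closed with locally bounded periods, hence the flow of $\xi$ integrates to an $S^1$-action --- is false for general flows. On $T^2=S^1\times S^1$ with coordinates $(\theta,r)$, the flow of $f(r)\partial_\theta$ with $f$ smooth, positive and non-constant has every orbit periodic with periods uniformly bounded above and below, yet $\phi_T={\rm id}$ would force $Tf(r)\in 2\pi\bbz$ for all $r$, which is impossible; note that here the orbit foliation is even a trivial circle bundle, so it is the flow, not the foliation, that fails to be periodic --- and your argument needs the flow of $\xi$ itself to be periodic, since with $\eta$ and $\xi$ fixed it is $\pounds_\xi g_1=0$ that must be produced. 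What rescues the Reeb flow is not boundedness of periods but Wadsley's theorem \cite{Wad}, which the paper invokes for exactly this purpose in its join construction: a flow all of whose orbits are circles factors through a smooth $S^1$-action if and only if there is a Riemannian metric for which the orbits are unit-speed geodesics. This hypothesis holds here for free: pick any $J_0\in\calj(\cald)$ and let $g_0$ be its canonical metric (\ref{conmet}); then $g_0(\xi,\xi)=1$, $g_0(\xi,\cdot)=\eta(\cdot)$, and $\nabla_\xi\xi=0$, since $g_0(\nabla_\xi\xi,\xi)=\tfrac{1}{2}\xi g_0(\xi,\xi)=0$ while for $X\in\grG(\cald)$ one has $g_0(\nabla_\xi\xi,X)=-g_0(\xi,\nabla_X\xi+[\xi,X])=-\tfrac{1}{2}Xg_0(\xi,\xi)-\eta([\xi,X])=0$, because $\pounds_\xi\eta=0$ gives $\eta([\xi,X])=\xi(\eta(X))=0$. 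So the Reeb orbits are closed unit-speed geodesics of $g_0$ and Wadsley yields $\phi_T={\rm id}$ for some $T>0$. Be aware also that you cannot shortcut this by citing Theorem \ref{bgthm}, since that theorem assumes K-contact --- the very thing being proved. With Wadsley inserted at the start, your averaging and polar-decomposition argument completes the proof along the same lines as \cite{BG05}.
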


Not all K-contact structures are quasi-regular; however, a result of Rukimbira \cite{Ruk95a} states that every K-contact structure can be approximated by quasi-regular ones whose Reeb vector fields lie in the Sasaki cone (see Section \ref{secSascone}) of the original K-contact structure making irregular K-contact structures quite tractible. In particular, for a K-contact structure the foliation $\calf_\xi$ is Riemannian. 
We are interested in 
\begin{equation}\label{allKcon}
\gR^+(\cald)=\{\xi\in \calr^+(\cald) ~|~\xi ~\text{is ~K-contact} \}.
\end{equation}

We shall make much use of what has been become known as the {\it orbifold Boothby-Wang construction} \cite{BoWa,BG00a,BG05}:

\begin{theorem}\label{bgthm}
Let $(M,\xi,\eta,\Phi,g)$ be a quasi-regular K-contact manifold
with compact leaves. Then
\begin{enumerate}
\item The space of leaves $M/\calf_\xi$ is an almost K\"ahler
orbifold $\calz$ such that the canonical projection $\pi:
M\ra{1.3} M/\calf_\xi$ is an orbifold Riemannian submersion. \item
$M$ is the total space of a principal $S^1$ orbibundle (V-bundle)
over $M/\calf_\xi$ with connection $1$-form $\eta$ whose curvature
$d\eta$ is the pullback by $\pi$ of a symplectic form $\gro$ on
$M/\calf_\xi.$ \item The symplectic form $\gro$ defines a
non-trivial integral orbifold cohomology class, that is,
$[p^*\gro]\in H^2_{orb}(M/\calf_\xi,\bbz)$ where $p$ is the
natural projection of the orbifold classifying space (see Section 4.3 of \cite{BG05}).
\end{enumerate}
\end{theorem}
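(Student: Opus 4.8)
The plan is to reduce the whole statement to one structural fact: in the K-contact case the characteristic foliation $\calf_\xi$ is the orbit foliation of a locally free circle action, after which every piece of claimed data descends from $M$ to the leaf space by invariance. First I would promote the flow of $\xi$ to an effective, locally free $S^1$-action. Because $\cals=(\xi,\eta,\Phi,g)$ is K-contact, $\xi$ is Killing (as remarked before Definition \ref{kcontype}), so its flow lies in the isometry group $\mathrm{Isom}(M,g)$, which is compact since $M$ is compact. By hypothesis each leaf of $\calf_\xi$, i.e. each orbit of $\xi$, is compact, hence an embedded circle, so the flow is periodic at every point. A Wadsley-type argument for periodic Killing flows then yields a uniform period, producing a genuine $S^1$-action with $\xi$ as its infinitesimal generator. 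Quasi-regularity, the condition that a leaf returns to a foliated chart at most $k$ times, translates directly into the statement that every isotropy group is cyclic of order at most $k$; in particular the action is locally free.

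Next I would identify the leaf space and descend the geometry. The slice theorem for the $S^1$-action models a neighborhood of an orbit with isotropy $\bbz/m$ on $(D^{2n}\times S^1)/(\bbz/m)$, so $\calz=M/\calf_\xi$ is covered by uniformizing charts $D^{2n}/(\bbz/m)$ with cyclic local groups; thus $\calz$ is an orbifold and $\pi$ is a Seifert principal $S^1$ orbibundle. The invariance relations $\pounds_\xi\Phi=0=\pounds_\xi g$ (K-contact), together with $\pounds_\xi\eta=0$ and $\xi\hook d\eta=0$, show that $g_\cald$, the transverse complex structure $J=\Phi|_\cald$, and the Levi form $d\eta|_\cald$ are all basic. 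Hence $J$ descends to an almost complex structure and $d\eta$ descends to a 2-form $\gro$ on $\calz$ with $d\eta=\pi^*\gro$; the compatibility of Definition \ref{compatibleJ} descends to give the almost K\"ahler structure, and the splitting $g=g_\cald\oplus\eta\otimes\eta$ exhibits $\pi$ as an orbifold Riemannian submersion, proving (1). For (2), $\eta$ is $S^1$-invariant with $\eta(\xi)=1$, so it is a connection 1-form for the orbibundle; its curvature is the basic form $d\eta=\pi^*\gro$, and $\gro$ is symplectic because $\eta\wedge(d\eta)^n\ne 0$ forces $\gro^n\ne 0$, while $d\gro=0$ follows from applying injectivity of $\pi^*$ to $\pi^*d\gro=dd\eta=0$.

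For (3) I would invoke orbifold Chern--Weil theory: the de Rham class $[\gro]$ represents, up to normalization, the image in real cohomology of the orbifold first Chern class of the principal $S^1$ orbibundle $M\to\calz$. Pulled back by the orbifold classifying-space projection $p$, it gives the integral class $[p^*\gro]\in H^2_{orb}(\calz,\bbz)$ as in Section 4.3 of \cite{BG05}, and non-triviality is forced by non-degeneracy of $\gro$ on the compact orbifold $\calz$.

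The main obstacle is the first step: rigorously passing from ``all leaves compact and quasi-regular'' to ``locally free $S^1$-action with cyclic isotropy,'' and then verifying via the slice theorem that the leaf space is an honest orbifold with cyclic uniformizing groups of bounded order. Once that Seifert structure is in hand, the descent of $J$, $g$, and $d\eta$ and the Chern--Weil identification are essentially formal consequences of the K-contact invariance, so the real content is concentrated entirely in establishing the circle action and the orbifold charts.
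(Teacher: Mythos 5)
The paper itself gives no proof of Theorem \ref{bgthm}: it is quoted as the known orbifold Boothby--Wang construction, with citations to \cite{BoWa,BG00a,BG05}. Your proposal correctly reconstructs the standard argument from those references---the Killing/Wadsley step promoting the Reeb flow to a locally free $S^1$-action (legitimate here because unit Killing fields have geodesic orbits, the very fact the paper later invokes from \cite{Wad} in the join construction), the slice theorem producing cyclic uniformizing charts on the leaf space, descent of the basic tensors $d\eta$, $J=\Phi|_\cald$, $g_\cald$ to give the almost K\"ahler structure and the Riemannian submersion, and Haefliger's classification of $S^1$ orbibundles together with Chern--Weil for the integrality and non-triviality in (3)---so it takes essentially the same route as the proof in the cited sources.
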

\begin{remark}\label{ratclass}
{\rm When $[p^*\gro]$ is an integral cohomology class in $H^2_{orb}(M/\calf_\xi,\bbz)$, the class $[\gro]$ is rational in the ordinary cohomology $H^2(M/\calf_\xi,\bbq).$ In fact, the orbifold K\"ahler classes are precisely the K\"ahler classes in $H^2(M/\calf_\xi,\bbq)$ that pullback under $p$ to integral classes in $H^2_{orb}(M/\calf_\xi,\bbz)$}
\end{remark}

Theorem \ref{bgthm} has an inversion (also known as orbifold Boothby-Wang) \cite{BG05}, namely

\begin{theorem}\label{kconinversionthm}
Let $(\calz,\gro,J)$ be an almost K\"ahler orbifold with
$[p^*\gro]\in H^2_{orb}(\calz,\bbz),$ and let $M$ denote the total
space of the circle V-bundle defined by the class $[\gro].$ Then
the orbifold $M$ admits a K-contact structure $(\xi,\eta,\Phi,g)$
such that $d\eta =\pi^*\gro$ where $\pi:M\ra{1.3} \calz$ is the
natural orbifold projection map. Furthermore, if all the local
uniformizing groups of $\calz$ inject into the structure group
$S^1$, then $M$ is a smooth K-contact manifold.
\end{theorem}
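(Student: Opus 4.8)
The plan is to realize $M$ as a principal circle V-bundle over $\calz$ carrying a connection whose curvature reproduces $\gro$, and then to lift the almost K\"ahler data $(\gro,J)$ on the base to a contact metric structure on the total space. This is exactly the inverse of the construction in Theorem \ref{bgthm}, so the real content is to carry each step out in the orbifold (V-bundle) category. For the first step, since $[p^*\gro]$ lies in $H^2_{orb}(\calz,\bbz)$ it is the first orbifold Chern class of a principal $S^1$ V-bundle, and I take $\pi\colon M\ra{1.3}\calz$ to be its total space. Applying orbifold Chern--Weil theory, I would choose a connection $1$-form $\eta$ on $M$ whose curvature is the pullback of $\gro$: concretely, $\eta(\xi)=1$ for the generator $\xi$ of the $S^1$-action, $d\eta$ is horizontal, and $d\eta=\pi^*\gro$. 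Such a normalization exists because the de Rham class of the curvature represents the Chern class $[\gro]$.

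Next I check that this $\eta$ is a contact form and that $\xi$ is its Reeb field. Because $\gro$ is symplectic, $(\pi^*\gro)^n$ is a nowhere-vanishing horizontal $2n$-form, so $\eta\wedge(d\eta)^n=\eta\wedge(\pi^*\gro)^n$ is a volume form and $\eta$ is contact with $\cald=\ker\eta$. The generator satisfies $\eta(\xi)=1$ and $\xi\hook d\eta=\xi\hook\pi^*\gro=0$ since $d\eta$ is horizontal, so $\xi$ is the Reeb field of $\eta$ and $\xi\in\calr^+(\cald)$. The connection then yields the splitting $TM=\cald\oplus L_\xi$ together with a fibrewise isomorphism $\pi_*\colon\cald\to\pi^*T\calz$; transporting $J$ through this isomorphism gives a complex structure on $\cald$, which I extend to an endomorphism $\Phi$ by setting $\Phi\xi=0$, and I define $g$ by Equation (\ref{conmet}). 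Compatibility of $\Phi$ with $d\eta|_\cald=\pi^*\gro$ is inherited from compatibility of $J$ with $\gro$ on $\calz$.

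To obtain the K-contact condition, note that the $S^1$-action generated by $\xi$ preserves the connection form $\eta$, while $\Phi$ and $g$ are assembled entirely from base data pulled back along $\pi$ and are therefore invariant under the Reeb flow. Hence $\pounds_\xi\Phi=0=\pounds_\xi g$, so $(\xi,\eta,\Phi,g)$ is a K-contact structure with $d\eta=\pi^*\gro$, which establishes the first assertion of the theorem.

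The remaining and most delicate point is the passage from orbifold to manifold. Over an orbifold chart $\tilde U/\Gamma$ the V-bundle has local model $(\tilde U\times S^1)/\Gamma$, with $\Gamma$ acting on the fibre through the homomorphism $\Gamma\to S^1$ that records the local structure group. When this homomorphism is injective, $\Gamma$ acts freely on the $S^1$-factor, so the local quotient is a smooth manifold, and these charts patch to a global smooth structure on $M$; this freeness criterion is precisely the hypothesis that all local uniformizing groups inject into $S^1$. The main obstacle is thus technological rather than conceptual: one needs the orbifold version of the Chern class/connection correspondence in the first step and the smoothness criterion for V-bundle total spaces in the last step, both developed in \cite{BG05}. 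Granting these, the verification that $(\xi,\eta,\Phi,g)$ is K-contact with the prescribed curvature is formal.
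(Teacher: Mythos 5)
Your proof is correct and is essentially the argument the paper relies on: the paper states this theorem without proof, deferring to \cite{BG05}, and the proof there is exactly your orbifold Boothby--Wang inversion --- choose a connection $1$-form $\eta$ on the circle V-bundle with curvature $d\eta=\pi^*\gro$, check it is contact with the generator of the $S^1$-action as Reeb field, lift $J$ horizontally to define $\Phi$ and take $g$ as in Equation (\ref{conmet}), observe all these tensors are $S^1$-invariant so the structure is K-contact, and use injectivity of the local uniformizing groups into $S^1$ to force the local $\Gamma$-actions on $\tU\times S^1$ to be free, so that $M$ is smooth. The only gloss is in your local model: the fibre action is in general by point-dependent maps $f_\gamma\colon\tU\ra{1.3} S^1$, and the injectivity hypothesis concerns the isotropy homomorphisms $\Gamma_x\ra{1.3} S^1$ at fixed points, which is precisely what your freeness argument uses, so no genuine gap results.
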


\subsection{Contact Isotopy}
Notice that in Theorem \ref{kconinversionthm} the choice of K-contact structure on $M$ is not uniquely determined by the symplectic structure $\gro$ on $\calz$. One chooses a connection 1-form $\eta$ such that $d\eta=\pi^*\gro,$ and this is determined only up to a gauge transformation $\eta\mapsto \eta +df$ for some $f\in C^\infty(M)$ that is invariant under $\xi$. Not even the contact structure $\cald$ is unique; what is unique, however, is the contact isotopy class. More generally, given contact metric structure $\cals=(\xi,\eta,\Phi,g)$ we can consider a deformation of the form $\eta\mapsto \eta_t=\eta+t\grz$ where $\grz$ is a basic 1-form with respect to the chararcteristic foliation $\calf_\xi$ that satisfies $\eta_t\wedge (d\eta_t)^n\neq 0$ for all $t\in[0,1]$. This clearly deforms the contact structure $\cald\mapsto \cald_t=\ker\eta_t$. However, Gray's Theorem \cite{Gra59,BG05} says that there are a family of diffeomorphisms (called a {\it contact isotopy}) $\varphi_t:M\ra{1.7} M$ and a family of positive smooth functions $f_t$ such that $\varphi_t^*\eta_t=f_t\eta$. So, although both the metric structures and almost CR structures have non-trivial deformations, the underlying contact structure does not, and there are no local invariants in contact geometry. If $\cals$ is K-contact (or Sasakian) with underlying contact structure $\cald$ then the entire family $$\cals^{\varphi_t}=((\varphi_t^{-1})_*\xi,f_t\eta,(\varphi_t^{-1})_*\circ \Phi_t\circ(\varphi_t)_*,\varphi_t^*g_t)$$
consists of K-contact (Sasakian) structures belonging to the contact structure $\cald$. So when we refer to a K-contact (or Sasakian) structure we often mean such a family defined up to contact isotopy.

\section{The Group of Contactomorphisms}\label{congroupsect}

\subsection{Basic Properties}
Let $\gD\gi\gf\gf(M)$ denote the group of diffeomorphisms of $M.$ We endow $\gD\gi\gf\gf(M)$ with the compact-open $C^\infty$ topology in which case it becomes a Fr\'echet Lie group. See \cite{Mil84,Ban97,Omo97,KrMi97} for the basics concerning infinite dimensional Lie groups and infinite dimensional manifolds. Since we deal almost exclusively with compact manifolds, the compact-open $C^\infty$ topology will suffice for our purposes. 

\begin{definition}\label{condiff}
Let $(M,\cald)$ be a closed connected contact manifold. Then we define the group $\gC\go\gn(M,\cald)$ of all {\bf contact diffeomorphisms} or {\bf contactomorphisms} by
$$\gC\go\gn(M,\cald)=\{\phi\in \gD\gi\gf\gf(M) ~|~ \phi_*\cald\subset \cald \}.$$
\end{definition}
The group $\gC\go\gn(M,\cald)$ is a regular infinite dimensional Fr\'echet Lie group \cite{Omo97} and an important invariant of a contact manifold. It is both locally contractible and locally path connected; hence, the identity component $\gC\go\gn_0(M,\cald)$ consists of contactomorphisms that are isotopic to the identity through contact isotopies. Recently it has been shown \cite{Ryb08} that the group $\gC\go\gn_0(M,\cald)$ is perfect\footnote{Recall that a group $G$ is perfect if $G$ equals its own commutator subgroup, that is, $G=[G,G]$. Since the commutator subgroup is normal, any non-Abelian simple group is perfect.}, in fact, it was shown independently to be simple \cite{Tsu08}. We denote by $\gC\go\gn(M,\cald)^+$ the subgroup of $\gC\go\gn(M,\cald)$ that preserves the orientation of $\cald,$ that is, it preserves the co-orientation of $(M,\cald).$ Clearly, $\gC\go\gn_0(M,\cald)\subset \gC\go\gn(M,\cald)^+.$

We can give the group $\gC\go\gn(M,\cald)$ an alternative formulation. If $\eta$ is a contact form, then it is easy to see that 
\begin{equation}
\gC\go\gn(M,\cald)=\{\phi\in \gD\gi\gf\gf(M)\, | \; \phi^*\eta=f^\eta_\phi\eta\, ,
 \;  f^\eta_\phi\in C^\infty(M)^*\}\, .
\end{equation}
Here, $C^\infty(M)^{*}$ denotes the subset of nowhere vanishing functions in 
$C^\infty(M)$, and $\gC\go\gn(M,\cald)^+$ is characterized by the subgroup such that $f_\phi>0.$  We also consider the subgroup $\gC\go\gn(M,\eta)$ of {\it strict contact transformations}, whose elements are those $\phi\in 
\gC\go\gn(M,\cald)$ such that $f_\phi=1$:
$$\gC\go\gn(M,\eta)=\{\phi\in \gD\gi\gf\gf(M)\, |\; \phi^*\eta=\eta \}\, .$$
This subgroup is also infinite dimensional; however, it is not a contact invariant.
It is well-known that the 1-parameter subgroup $\Xi$ generated by the flow of the Reeb vector field $\xi$ lies in the center of $\gC\go\gn(M,\eta).$ When $\cals=(\xi,\eta,\Phi,g)$ is a quasi-regular K-contact structure, the subgroup $\Xi$ is a closed normal subgroup of $\gC\go\gn(M,\eta)$ isomorphic to $S^1.$ The case that has been studied, mostly from the point of view of geometric quantization theory, is when $\eta$ is regular in which case the group $\gC\go\gn(M,\eta)$ has been called the {\it group of quantomorphisms} \cite{Sou70,RaSc81}.

\subsection{The Lie algebras}
The Lie algebras of these two groups are quite important. They are both Fr\'echet vector spaces which provide local model spaces for the corresponding Fr\'echet Lie groups. The first of these
is the Lie 
algebra of {\it infinitesimal contact transformations},
\begin{equation}\label{infcon}
\gc\go\gn(M,\cald)=\{X\in \X(M)\; | \; \pounds_X\eta=a(X)\eta\, , \; 
a(X)\in C^\infty(M)\}\, ,
\end{equation}
while the second is the subalgebra of {\it infinitesimal strict contact 
transformations}
\begin{equation}\label{strictcontacttrans}
\gc\go\gn(M,\eta)=\{X\in \X(M)\; | \; \pounds_X\eta =0\}\, .
\end{equation}

Let $\grG(\cald)$ denote the smooth sections of the contact bundle $\cald.$ Then
\begin{lemma}\label{Dcon}
$$\grG(\cald)\cap \gc\go\gn(M,\cald)=\{0\}.$$
\end{lemma}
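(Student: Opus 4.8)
The plan is to show that a vector field $X$ that is simultaneously a section of $\cald$ and an infinitesimal contact transformation must vanish identically. Fix a contact form $\eta\in\gC^+(\cald)$, so that $\cald=\ker\eta$, and recall the splitting $TM=\cald\oplus L_\xi$ furnished by the Reeb field $\xi$. The condition $X\in\grG(\cald)$ is exactly $\eta(X)=0$, while the condition $X\in\gc\go\gn(M,\cald)$ is $\pounds_X\eta=a(X)\eta$ for some function $a(X)\in C^\infty(M)$. The goal is to combine these two equations to force $X=0$.

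First I would apply the Cartan formula $\pounds_X\eta=X\hook d\eta+d(\eta(X))$ to the infinitesimal contact condition. Since $\eta(X)=0$ by hypothesis, the second term drops out, and the contact condition becomes
\begin{equation}\label{Dconkey}
X\hook d\eta=a(X)\,\eta.
\end{equation}
Next I would contract both sides of \eqref{Dconkey} with the Reeb field $\xi$. On the left-hand side, $(X\hook d\eta)(\xi)=d\eta(X,\xi)=-d\eta(\xi,X)=-(\xi\hook d\eta)(X)=0$, using the defining property $\xi\hook d\eta=0$ of the Reeb field. On the right-hand side, $a(X)\,\eta(\xi)=a(X)$, since $\eta(\xi)=1$. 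Hence $a(X)=0$, and \eqref{Dconkey} collapses to $X\hook d\eta=0$.

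Finally I would use the nondegeneracy of the Levi form. The equation $X\hook d\eta=0$ says that $X$ lies in the kernel of $d\eta$ as a $2$-form on $TM$. But $d\eta$ restricted to $\cald$ is a symplectic (hence nondegenerate) bilinear form, because $\eta\wedge(d\eta)^n\neq0$; combined with $\xi\hook d\eta=0$, the full kernel of $d\eta$ on $TM$ is precisely the line $L_\xi$ spanned by $\xi$. Thus $X\hook d\eta=0$ forces $X$ to be a multiple of $\xi$, say $X=h\xi$ for some $h\in C^\infty(M)$. Applying $\eta$ gives $0=\eta(X)=h\,\eta(\xi)=h$, so $X=0$, which is the claim.

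I do not anticipate a serious obstacle here; the statement is essentially a direct consequence of the Reeb field being the unique annihilator direction of $d\eta$ together with the section condition $\eta(X)=0$. The only point demanding minor care is the contraction step showing $a(X)=0$, since it is what lets us pass from the conformal contact condition to the sharper statement $X\hook d\eta=0$; everything after that is linear algebra fibrewise, using that $\cald\oplus L_\xi$ is a direct sum decomposition on which $d\eta$ is nondegenerate on $\cald$ and null on $L_\xi$.
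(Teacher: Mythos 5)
Your proposal is correct and follows essentially the same route as the paper's proof: apply the Cartan formula using $\eta(X)=0$, evaluate on the Reeb field to kill the conformal factor, and conclude from the nondegeneracy of $d\eta$. The only cosmetic difference is that the paper finishes directly by nondegeneracy of $d\eta$ restricted to $\cald$ (since $X$ is already a section of $\cald$), whereas you route through the full kernel $L_\xi$ of $d\eta$ on $TM$ and then invoke $\eta(X)=0$ a second time; both are valid.
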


\begin{proof}
Let $X\in \grG(\cald)\cap \gc\go\gn(M,\cald)$, then we have 
$$f_X\eta=\pounds_X\eta =d(\eta(X))+X\hook d\eta=X\hook d\eta$$
for some smooth function $f_X.$ Evaluating on $\xi$ gives $f_X=0$ since $\xi$ is the Reeb vector field of $\eta$ implying that $X\hook d\eta=0$ which in turn implies that $X=0$ by the non-degeneracy of $d\eta$ on $\cald.$
\end{proof}

Now a choice of contact form $\eta$ in $\gC^+(\cald)$ gives a well known Lie algebra isomorphism between $\gc\go\gn(M,\cald)$ and $C^\infty(M)$ given explicitly by
\begin{equation}\label{liealgiso}
 X\mapsto \eta(X)
\end{equation}
with the Lie algebra structure on $C^\infty(M)$ given by the Poisson-Jacobi bracket defined by $\{f,g\}=\eta([X_f,X_g]).$ 
Then one easily sees that the subalgebra $\gc\go\gn(M,\eta)$ is isomorphic to the subalgebra of $\Xi$-invariant functions $C^\infty(M)^\Xi.$ Moreover, we have
\begin{lemma}\label{closedalg}
$\gc\go\gn(M,\eta)$ is closed in $\gc\go\gn(M,\cald)$.
\end{lemma}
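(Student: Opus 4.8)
The plan is to exhibit $\gc\go\gn(M,\eta)$ as the kernel of a continuous linear operator restricted to $\gc\go\gn(M,\cald)$, for which closedness is then automatic. Define $L\colon \X(M)\to \Omega^1(M)$ by $L(X)=\pounds_X\eta$, where both spaces carry the compact-open $C^\infty$ topology. By the defining formulas (\ref{infcon}) and (\ref{strictcontacttrans}) one has $\gc\go\gn(M,\eta)=\gc\go\gn(M,\cald)\cap\ker L$, so it suffices to check that $L$ is continuous; then $\ker L$ is closed in $\X(M)$ and its trace on $\gc\go\gn(M,\cald)$ is closed in the subspace topology.

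The key step is the continuity of $L$. Using Cartan's formula $\pounds_X\eta=d(\eta(X))+X\hook d\eta$, the operator $L$ is visibly a first-order linear differential operator whose coefficients are built from $\eta$ and $d\eta$, hence smooth. In any local chart the components of $L(X)$ are therefore fixed smooth-coefficient combinations of the components of $X$ and their first partial derivatives. Since $M$ is compact, the $C^\infty$ topology is given by the seminorms measuring uniform convergence of all derivatives on $M$; consequently, if $X_n\to X$ in $\X(M)$ then every component of $X_n$ together with all its derivatives converges uniformly, and the same then holds for $L(X_n)\to L(X)$. Thus $L$ is continuous (and since these Fréchet spaces are metrizable, sequential continuity suffices).

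With continuity in hand, $\ker L=L^{-1}(\{0\})$ is closed because $\{0\}$ is closed in $\Omega^1(M)$, and intersecting with $\gc\go\gn(M,\cald)$ shows that $\gc\go\gn(M,\eta)$ is closed in $\gc\go\gn(M,\cald)$. Equivalently, one may argue through the Lie algebra isomorphism $\gc\go\gn(M,\cald)\cong C^\infty(M)$, $X\mapsto\eta(X)$, under which (as noted just above in the text) $\gc\go\gn(M,\eta)$ corresponds to $C^\infty(M)^\Xi=\{f\mid \pounds_\xi f=0\}$; the latter is the kernel of the continuous first-order operator $f\mapsto \pounds_\xi f$, hence closed.

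I expect the only point genuinely requiring care to be the continuity statement in the Fréchet topology: one must confirm that the compact-open $C^\infty$ topology renders first-order differential operators continuous, which reduces to the elementary observation that uniform convergence of all derivatives is preserved under differentiation and under multiplication by fixed smooth functions. The isomorphism route carries a mild extra burden, namely verifying that $X\mapsto\eta(X)$ is a homeomorphism (that its inverse is also continuous), so I would favor the direct kernel argument, which sidesteps that issue entirely.
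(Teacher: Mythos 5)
Your proposal is correct and is essentially the paper's own argument: the paper's proof is exactly your second route, using the isomorphism $X\mapsto\eta(X)$ to identify $\gc\go\gn(M,\eta)$ with $C^\infty(M)^\Xi=\ker\xi$ and then invoking the continuity of differential operators in the Fr\'echet topology to conclude that this kernel is closed. One small correction to your closing remark: the isomorphism route carries no extra burden, because $\gc\go\gn(M,\eta)$ is the preimage of the closed set $\ker\xi$ under the map $X\mapsto\eta(X)$, and only the (obvious) continuity of this forward map is needed to pull back a closed set --- continuity of the inverse plays no role.
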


\begin{proof}
Using the isomorphism (\ref{liealgiso}) we have $\gc\go\gn(M,\eta)\approx C^\infty(M)^\Xi=\ker\xi$ as a differential operator. Any differential operator is continuous in the Fr\'echet topology. So $\ker\xi$ is closed, and the result follows by the isomorphism (\ref{liealgiso}).
\end{proof}

The following result is well known,

\begin{proposition}\label{centraleta}
Let $(M,\cald)$ be a contact manifold. For each choice of contact form $\eta$, the centralizer $\gz(\xi)$ of the Reeb vector field $\xi$ in  $\gc\go\gn(M,\cald)$ is precisely $\gc\go\gn(M,\eta)$.
\end{proposition}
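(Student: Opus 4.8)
The plan is to show the two sets $\gz(\xi)$ and $\gc\go\gn(M,\eta)$ coincide via mutual inclusion, translating everything through the Lie-algebra isomorphism (\ref{liealgiso}) and Cartan's formula. The centralizer $\gz(\xi)$ consists of those $X\in\gc\go\gn(M,\cald)$ with $[\xi,X]=0$, equivalently $\pounds_\xi X=0$, while $\gc\go\gn(M,\eta)$ consists of those $X$ with $\pounds_X\eta=0$. So the task is to prove that, for $X\in\gc\go\gn(M,\cald)$, the conditions $\pounds_\xi X=0$ and $\pounds_X\eta=0$ are equivalent. First I would record the general identity: for any $X\in\gc\go\gn(M,\cald)$ we have $\pounds_X\eta=a(X)\eta$ by definition (\ref{infcon}), and Cartan's formula gives $\pounds_X\eta=d(\eta(X))+X\hook d\eta$. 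Writing $h=\eta(X)$ for the Hamiltonian function corresponding to $X$ under (\ref{liealgiso}), I would evaluate $a(X)\eta=dh+X\hook d\eta$ on the Reeb field $\xi$; since $\eta(\xi)=1$ and $\xi\hook d\eta=0$, this yields $a(X)=\xi(h)=dh(\xi)$.

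\textbf{The reverse direction.} For the inclusion $\gc\go\gn(M,\eta)\subseteq\gz(\xi)$, suppose $\pounds_X\eta=0$, so $a(X)=0$. I would use the standard identity $\pounds_\xi\circ(\,\cdot\hook\eta)$ relating the two Lie derivatives, or more directly argue that since $\xi$ preserves $\eta$ (indeed $\pounds_\xi\eta=0$ always, as $\xi$ is the Reeb field), the bracket $[\xi,X]$ satisfies $\eta([\xi,X])=\xi(\eta(X))-X(\eta(\xi))-d\eta(\xi,X)=\xi(h)-0-0=\xi(h)$. But $\xi(h)=a(X)=0$ by hypothesis, and since $[\xi,X]\in\gc\go\gn(M,\cald)$ as well, the vanishing of its image under the isomorphism (\ref{liealgiso}) forces $[\xi,X]=0$. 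Thus $X\in\gz(\xi)$.

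\textbf{The forward direction.} For $\gz(\xi)\subseteq\gc\go\gn(M,\eta)$, suppose $[\xi,X]=0$. Then $0=\eta([\xi,X])=\xi(h)=a(X)$ by the same computation, whence $\pounds_X\eta=a(X)\eta=0$, so $X\in\gc\go\gn(M,\eta)$. The key observation throughout is that the single scalar $a(X)=\xi(\eta(X))$ simultaneously controls both the conformal factor of $\pounds_X\eta$ and, via the isomorphism (\ref{liealgiso}), the image of the bracket $[\xi,X]$; this is what makes the two conditions equivalent.

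\textbf{The main obstacle} I anticipate is verifying cleanly that $\eta([\xi,X])=\xi(\eta(X))$, i.e. that the only surviving term in the expansion of $\eta([\xi,X])$ is $\xi(h)$. This follows from the formula $\eta([U,V])=U(\eta(V))-V(\eta(U))-d\eta(U,V)$ together with $\eta(\xi)=1$ (so $X(\eta(\xi))=0$) and $\xi\hook d\eta=0$ (so $d\eta(\xi,X)=0$), but one must be careful that $X\in\gc\go\gn(M,\cald)$ does \emph{not} a priori lie in $\grG(\cald)$, so $\eta(X)$ need not vanish — indeed it is precisely the Hamiltonian $h$ whose Reeb-derivative we track. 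Once this bracket identity is in hand, both inclusions are immediate, and invoking the injectivity of (\ref{liealgiso}) (equivalently Lemma \ref{Dcon}) to conclude $[\xi,X]=0$ from $\eta([\xi,X])=0$ completes the argument.
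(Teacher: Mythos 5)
Your proof is correct and follows essentially the same route as the paper: the paper cites Lemma \ref{R+comlem} (whose content is exactly your bracket computation $\eta([\xi,X])=\xi(\eta(X))$ combined with Lemma \ref{Dcon}) together with the isomorphism (\ref{liealgiso}), under which $\gc\go\gn(M,\eta)$ corresponds to the functions annihilated by $\xi$ --- precisely your identity $a(X)=\xi(\eta(X))$. You have simply unpacked these ingredients explicitly, so there is nothing to correct.
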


\begin{proof}
This follows easily from the following lemma and the isomorphism (\ref{liealgiso}).
\end{proof}

We also have 

\begin{lemma}\label{R+comlem}
Let $(M,\cald)$ be a contact manifold. Then
\begin{enumerate}
\item $\calr^+(\cald)\subset \gc\go\gn(M,\cald),$
\item Let $\xi\in \calr^+(\cald)$ and $X\in \gc\go\gn(M,\cald)$. Then $[\xi,X]=0$ if and only if $\xi(\eta(X))=0.$
\end{enumerate}
\end{lemma}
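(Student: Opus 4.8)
The plan is to treat the two parts separately, in both cases working with the specific contact form $\eta=\eta_\xi\in\gC^+(\cald)$ whose Reeb field is $\xi$ (this exists by the bijection between $\calr^+(\cald)$ and $\gC^+(\cald)$). The crucial feature of this choice is that $\pounds_\xi\eta=0$, which follows at once from Cartan's formula $\pounds_\xi\eta=d(\eta(\xi))+\xi\hook d\eta=d(1)+0=0$, using the two defining properties $\eta(\xi)=1$ and $\xi\hook d\eta=0$ of the Reeb vector field. For part (1), this identity already exhibits $\xi$ as an infinitesimal strict contact transformation, so $\xi\in\gc\go\gn(M,\eta)\subset\gc\go\gn(M,\cald)$. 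If one prefers to verify the defining condition (\ref{infcon}) relative to an arbitrary representative $\eta'=h\eta$ with $h\in C^\infty(M)^+$, one expands $\pounds_\xi\eta'=(\xi h)\eta+h\pounds_\xi\eta=(\xi\log h)\eta'$, which is of the required form $a(\xi)\eta'$; either way $\xi$ preserves $\cald$.

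For part (2), I would compute $\eta([\xi,X])$ directly. Since $[\xi,X]=\pounds_\xi X$ and the Lie derivative satisfies the Leibniz rule over the natural pairing, one has
$$\xi(\eta(X))=\pounds_\xi\big(\eta(X)\big)=(\pounds_\xi\eta)(X)+\eta(\pounds_\xi X)=\eta([\xi,X]),$$
where the final equality uses $\pounds_\xi\eta=0$. Thus $\eta([\xi,X])=\xi(\eta(X))$, which immediately gives the forward implication: $[\xi,X]=0$ forces $\xi(\eta(X))=\eta([\xi,X])=0$.

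For the reverse implication I would invoke Lemma \ref{Dcon}. Assuming $\xi(\eta(X))=0$, the displayed identity yields $\eta([\xi,X])=0$, i.e. $[\xi,X]\in\ker\eta=\grG(\cald)$. On the other hand $[\xi,X]\in\gc\go\gn(M,\cald)$, since $\gc\go\gn(M,\cald)$ is a Lie algebra containing both $\xi$ (by part (1)) and $X$. Hence $[\xi,X]\in\grG(\cald)\cap\gc\go\gn(M,\cald)=\{0\}$ by Lemma \ref{Dcon}, so $[\xi,X]=0$. The main point to get right is the normalization of the form: only for the adapted form $\eta_\xi$ does $\pounds_\xi\eta$ vanish and the clean identity $\eta([\xi,X])=\xi(\eta(X))$ hold, so the stated equivalence is naturally read with $\eta=\eta_\xi$; for a general representative the computation acquires an extra term proportional to $\eta(X)$. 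Once this choice is fixed, no serious obstacle remains, the argument being a short application of Cartan's formula together with the already-established Lemma \ref{Dcon}.
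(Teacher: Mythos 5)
Your proof is correct and takes essentially the same route as the paper: both establish the key identity $\eta([\xi,X])=\xi(\eta(X))$ (the paper via the invariant formula for $d\eta(\xi,X)$ together with the Reeb conditions, you via Cartan's formula $\pounds_\xi\eta=0$ and the Leibniz rule, which is the same computation rearranged) and then settle the converse by observing that $[\xi,X]\in\grG(\cald)\cap\gc\go\gn(M,\cald)$ and invoking Lemma \ref{Dcon}. Your explicit verification of part (1) and your remark that the equivalence in part (2) is to be read with the adapted form $\eta=\eta_\xi$ are points the paper leaves implicit, and you handle them correctly.
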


\begin{proof}
We have
$$\eta([\xi,X])=-2d\eta(\xi,X)+\xi(\eta(X))-X(\eta(\xi))=\xi(\eta(X))$$
from which the only if part follows immediately. Conversely, $\xi(\eta(X))=0$ implies that $[\xi,X]$ is a section of $\cald.$ But also $[\xi,X]\in \gc\go\gn(M,\cald)$, so the result follows by Lemma \ref{Dcon}. 
\end{proof}

The identification between the Lie algebra $\gc\go\gn(M,\cald)$ and $C^\infty(M)$ can be thought of formally in terms of moment maps. For any contact form $\eta$ in $\gC^+(\cald)$, we define a map $\mu_\eta:M\ra{1.5} \gc\go\gn(M,\cald)^*$ (the algebraic dual) by 
\begin{equation}\label{momentmap}
\langle \mu_\eta(x),X\rangle=\eta(X)(x).
\end{equation}
Moreover, if $\eta'=f\eta$ is any other contact form in $\gC^+(\cald)$, then $\mu_{\eta'}=f\mu_\eta,$ and it is equivariant in the sense the for any $\phi\in \gC\go\gn(M,\cald)$, we have ${\rm Ad}^*_\phi\mu_\eta=\mu_{\phi^*\eta}=f_\phi\mu_\eta$ where $f_\phi$ is defined by $\phi^*\eta=f_\phi\eta.$

We also have

\begin{lemma}\label{R+lem}
Let $(M,\cald)$ be a compact contact manifold. Then
\begin{enumerate}
\item $\calr^+(\cald)$ is an open convex cone in $\gc\go\gn(M,\cald)$.
\item $\calr^+(\cald)$ is an invariant subspace under the adjoint action of $\gC\go\gn(M,\cald)$ on its Lie algebra $\gc\go\gn(M,\cald)$.
\item Every choice of $\eta\in \gC^+(\cald)$ gives an isomorphism of $\calr^+(\cald)$ with the cone of positive functions $C^\infty(M)^+$ in $C^\infty(M).$
\end{enumerate}
\end{lemma}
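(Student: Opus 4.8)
The plan is to prove Lemma \ref{R+lem} by reducing all three assertions to the Lie-algebra isomorphism (\ref{liealgiso}), which translates statements about $\calr^+(\cald)$ inside $\gc\go\gn(M,\cald)$ into statements about the positive cone $C^\infty(M)^+$ inside $C^\infty(M)$. The key observation that makes everything work is that, under $X\mapsto \eta(X)$, the subset $\calr^+(\cald)$ is exactly the image of $C^\infty(M)^+$: indeed, by Lemma \ref{R+comlem}(1) we know $\calr^+(\cald)\subset\gc\go\gn(M,\cald)$, and the bijection between $\gC^+(\cald)$ and $\calr^+(\cald)$ (each positive contact form $f\eta$ having its own Reeb field) corresponds precisely to the identification of $\gC^+(\cald)$ with $C^\infty(M)^+$ described in Section \ref{basdef}. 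So I would first pin down this correspondence carefully and then prove (3), (1), (2) in that order, since (3) is really the engine driving the other two.

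For part (3), I would fix $\eta\in\gC^+(\cald)$ and use (\ref{liealgiso}) to identify $\gc\go\gn(M,\cald)\cong C^\infty(M)$. The claim is that this isomorphism restricts to a bijection $\calr^+(\cald)\to C^\infty(M)^+$. A Reeb field $\xi'$ of another positive contact form $\eta'=f\eta$ satisfies $\eta(\xi')=f>0$ under the chosen identification, so $\calr^+(\cald)$ maps into $C^\infty(M)^+$; conversely, any $f>0$ gives a positive contact form $f\eta$ with its own Reeb field lying in $\calr^+(\cald)$, giving surjectivity. Linearity of $X\mapsto\eta(X)$ then upgrades this set-theoretic bijection to a linear isomorphism of the ambient vector spaces carrying one cone onto the other.

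Granting (3), part (1) is immediate: $C^\infty(M)^+$ is manifestly an open convex cone in $C^\infty(M)$ in the Fr\'echet topology (positivity is preserved under addition and multiplication by positive scalars, and openness on a compact $M$ follows because a uniformly positive function stays positive under small $C^0$—hence small $C^\infty$—perturbations), and a linear isomorphism preserves openness, convexity, and the cone property. For part (2), I would argue directly at the level of vector fields rather than through the chosen $\eta$, since the adjoint action does not fix $\eta$. Given $\xi\in\calr^+(\cald)$ the Reeb field of some $\eta'\in\gC^+(\cald)$, and $\phi\in\gC\go\gn(M,\cald)$, I would check that ${\rm Ad}_\phi\xi=\phi_*\xi$ is the Reeb field of $(\phi^{-1})^*\eta'$, which is again a positive contact form representing $\cald$ since $\phi$ preserves the co-orientation on the relevant component; the equivariance of the moment map in (\ref{momentmap}) makes this bookkeeping transparent. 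Hence $\phi_*\xi\in\calr^+(\cald)$, establishing invariance.

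The main obstacle I anticipate is the openness claim hidden in (1)—specifically, verifying that $C^\infty(M)^+$ is genuinely open in the Fr\'echet topology of $C^\infty(M)$, which relies essentially on the compactness of $M$ (so that a positive continuous function has a positive lower bound and small Fr\'echet perturbations cannot drive it to zero). A secondary subtlety in (2) is ensuring the adjoint action preserves the positivity/co-orientation, i.e.\ that one works inside $\gC\go\gn(M,\cald)^+$ or is careful about which component of $\gC\go\gn(M,\cald)$ acts; the equivariance relation ${\rm Ad}^*_\phi\mu_\eta=f_\phi\mu_\eta$ with $f_\phi>0$ already stated above is exactly what guarantees positivity is retained, so I would lean on it rather than recomputing.
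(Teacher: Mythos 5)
Your proposal is correct and rests on the same core identification as the paper---under (\ref{liealgiso}) the Reeb cone $\calr^+(\cald)$ corresponds to the positive functions $C^\infty(M)^+$---but it is organized differently and is more self-contained. The paper disposes of (1) by citing \cite{Boy08}, declares (3) ``clear from the definition,'' and proves (2) by the one-line computation $\eta({\rm Ad}_\phi(\xi'))=f_{\phi^{-1}}\,\eta(\xi')\circ\phi^{-1}>0$. You instead prove (3) first in detail, derive (1) from it, and prove (2) geometrically by observing that ${\rm Ad}_\phi\xi'=\phi_*\xi'$ is the Reeb vector field of $(\phi^{-1})^*\eta'$; that observation is exactly the paper's computation in coordinate-free form, so the two proofs of (2) have identical content. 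What your route buys is an actual argument for (1) in place of an external citation; what it costs is that you must know the isomorphism (\ref{liealgiso}) is a homeomorphism before transferring openness---a bare linear bijection does not preserve open sets. This is fine here because the inverse $f\mapsto X_f$ is a first-order linear differential operator (alternatively, invoke the open mapping theorem for Fr\'echet spaces), but it deserves a sentence.

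Two smaller points. First, if $\eta'=f\eta$ then $1=\eta'(\xi')=f\,\eta(\xi')$ gives $\eta(\xi')=1/f$, not $f$; the slip is harmless since $1/f$ ranges over all of $C^\infty(M)^+$ as $f$ does, so your bijection argument survives. Second, your co-orientation worry in (2) is well founded, indeed more than you suggest: the paper's equivariance relation only asserts that $f_\phi$ is nowhere vanishing, not positive, and the paper's own inequality silently assumes $f_{\phi^{-1}}>0$. If $(M,\cald)$ admits a co-orientation-reversing contactomorphism---for instance $(x_1,x_2,\theta)\mapsto(-x_1,-x_2,\theta)$ on $T^3$ with $\eta_k$ from Example \ref{t3}, which pulls $\eta_k$ back to $-\eta_k$---then ${\rm Ad}_\phi$ carries $\calr^+(\cald)$ onto $-\calr^+(\cald)$, which is disjoint from $\calr^+(\cald)$. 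So statement (2) literally holds only for $\gC\go\gn(M,\cald)^+$ (or $\gC\go\gn_0(M,\cald)$), and your instinct to restrict to the co-orientation-preserving subgroup is the correct fix, for the paper's proof as well as yours.
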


\begin{proof}
(1) was proven in \cite{Boy08}. To prove (2) we recall that the adjoint representation of any Lie group on its Lie algebra is defined by ${\rm Ad}_\phi(X)=(L_\phi\circ R_{\phi^{-1}})_*X =(R_{\phi^{-1}})_*X$. Thus, for $\xi'\in \calr^+(\cald)$ we have
$$\eta({\rm Ad}_\phi(\xi'))=f_{\phi^{-1}}\eta(\xi')\circ\phi^{-1}>0.$$
(3) is clear from the definition.
\end{proof}

\subsection{Fr\'echet Lie Groups}
The next result shows that $\gC\go\gn(M,\eta)$ is a closed Fr\'echet Lie subgroup of $\gC\go\gn(M,\cald).$ In particular, it implies that $\gC\go\gn(M,\eta)$ is a Fr\'echet Lie group for any contact form $\eta$, a general proof of which I have been unable to find in the literature (when $\eta$ is regular it is Theorem 7.3 in \cite{Omo97}). 

\begin{proposition}\label{etasub}
Let $(M,\cald)$ be a compact contact manifold. Then for any contact form $\eta$ the subgroup $\gC\go\gn(M,\eta)$ is a closed submanifold of $\gC\go\gn(M,\cald).$ Hence, $\gC\go\gn(M,\eta)$ is a Fr\'echet Lie subgroup of $\gC\go\gn(M,\cald).$
\end{proposition}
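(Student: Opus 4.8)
The plan is to realize $\gC\go\gn(M,\eta)$ as the fibre over the constant function $1$ of the \emph{conformal factor map}
$$\Psi:\gC\go\gn(M,\cald)^+\ra{1.3} C^\infty(M)^+,\qquad \Psi(\phi)=f_\phi,\quad \phi^*\eta=f_\phi\eta,$$
and then to build an explicit submanifold chart. First I would record the cocycle identity $f_{\phi\circ\psi}=(f_\phi\circ\psi)\,f_\psi$, which gives $\Psi(\phi_0\circ\psi)=\Psi(\psi)$ for every $\phi_0\in\gC\go\gn(M,\eta)$. Hence left translation by a strict contactomorphism carries $\gC\go\gn(M,\eta)$ onto itself and a chart at the identity onto a chart at $\phi_0$, so it suffices to produce a single submanifold chart near $e$.

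Under the isomorphism (\ref{liealgiso}) the model space $\gc\go\gn(M,\cald)$ is identified with $C^\infty(M)$, and by Lemma \ref{closedalg} together with Proposition \ref{centraleta} the subalgebra $\gc\go\gn(M,\eta)$ corresponds to $\ker\xi$, the functions annihilated by the Reeb derivative. Since $\pounds_X\eta=\bigl(\xi(\eta(X))\bigr)\eta$ for $X\in\gc\go\gn(M,\cald)$, the differential of $\Psi$ at $e$ is the operator $h\mapsto\xi h$ on $C^\infty(M)$, with kernel $\ker\xi$, as expected. Note that $\xi$ is \emph{not} surjective: since $\xi$ preserves the volume form $\mu=\eta\wedge(d\eta)^n$ one has $\int_M\xi h\,\mu=0$, so $\Psi$ is not a submersion onto an open set and $\gC\go\gn(M,\eta)$ cannot be obtained as a naive regular level set; a direct construction of the chart is required. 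The same invariance of $\mu$ makes $\xi$ skew-symmetric on $L^2(M,\mu)$, which I would use to obtain the splitting $C^\infty(M)=\ker\xi\oplus\gm$ and a candidate closed complement $\gm$ to $\gc\go\gn(M,\eta)$.

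Next I would construct a transverse slice. Using that $\gC\go\gn(M,\cald)$ is a \emph{regular} Fr\'echet Lie group and that on the compact manifold $M$ contact vector fields are complete, the fields $X_h$ integrate to contact isotopies, giving a smooth map from a neighbourhood of $0$ in $\gm$ into $\gC\go\gn(M,\cald)$ by $h\mapsto(\text{time-one flow of }X_h)$; this is the slice $S$, with $T_eS=\gm$ and, near $e$, $S\cap\gC\go\gn(M,\eta)=\{e\}$ because $\xi|_\gm$ is injective. The core step is to show that the multiplication map $\gC\go\gn(M,\eta)\times S\ra{1.3}\gC\go\gn(M,\cald)$ is a diffeomorphism onto a neighbourhood of $e$; its differential at $(e,e)$ is the isomorphism $\ker\xi\oplus\gm\to C^\infty(M)$, so this yields the desired adapted chart. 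Closedness is then immediate, since $\Psi$ is continuous and $\gC\go\gn(M,\eta)=\Psi^{-1}(1)$, and the Fr\'echet Lie subgroup statement follows from the chart together with Lemma \ref{closedalg}.

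\textbf{Main obstacle.} The difficulty is precisely that the Fr\'echet category admits no inverse or implicit function theorem, so the assertion that the multiplication map is a local diffeomorphism cannot follow from its differential alone. This is where the regular Lie group structure of $\gC\go\gn(M,\cald)$ must be used in earnest, generalizing Omori's treatment of the regular case (Theorem 7.3 of \cite{Omo97}). Concretely, inverting the linearization amounts to solving $\xi h=g$ in the complementary directions: this is transparent when the Reeb flow is quasi-regular, where one may average over the resulting $S^1$- (or torus-) action to define the projection onto $\ker\xi$, but it is genuinely delicate for an irregular Reeb field. Controlling this inversion uniformly, using the compactness of $M$ and an inverse-limit (ILH) argument rather than a naive appeal to ellipticity, is the step I expect to be the hardest and the technical heart of the proof.
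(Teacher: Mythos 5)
Your plan has a genuine gap, and in fact you name it yourself: the ``core step,'' that the multiplication map $\gC\go\gn(M,\eta)\times S\to\gC\go\gn(M,\cald)$ is a diffeomorphism onto a neighbourhood of the identity, is exactly the kind of statement that cannot be deduced from invertibility of its differential in the Fr\'echet category, and \emph{regularity} of $\gC\go\gn(M,\cald)$ does not repair this: a regular Fr\'echet Lie group in the sense of Milnor/Omori/Kriegl--Michor only guarantees that smooth curves in the Lie algebra integrate to smooth curves in the group; it supplies no inverse or implicit function theorem. Moreover, the gap appears one step earlier than you acknowledge. The splitting $C^\infty(M)=\ker\xi\oplus\gm$ that you propose to extract from skew-symmetry of $\xi$ on $L^2(M,\mu)$ does not exist as a topological direct sum of Fr\'echet spaces for a general (irregular) Reeb field: the $L^2$-orthogonal (ergodic) projection onto $\ker\xi$ need not carry smooth functions to smooth functions, and the image of $\xi$ acting on $C^\infty(M)$ need not even be closed. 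This is a small-divisor phenomenon --- already for Reeb flows whose restriction to invariant tori is a linear flow with Liouville rotation number (such flows occur among the irregular contact forms on $T^3$ discussed in Example \ref{t3}) the equation $\xi h=g$ is unsolvable in $C^\infty$ for a dense set of admissible right-hand sides. So the analytic difficulty you flag at the end is not a uniform-estimate issue to be ``controlled''; the linearized problem is genuinely non-invertible transverse to $\ker\xi$, and an ILH/Nash--Moser scheme built on inverting $\xi$ cannot close.

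The paper's proof avoids all of this by never inverting $\xi$ and never choosing a complement. It uses Ly\v{c}hagin's charts (as presented in Banyaga's book): a contactomorphism $\phi$ near the identity is identified with its graph $\grG_\phi(x)=(x,\phi(x),f_\phi(x))$, a Legendrian submanifold of $\bigl(M\times M\times\bbr^+,\,r\pi_1^*\eta-\pi_2^*\eta\bigr)$, and a fixed contactomorphism $\Psi$ onto a neighbourhood of the zero section of $J^1(M)$ turns this graph into the holonomic section $j^1(F_\phi)$ of a unique function $F_\phi\in C^\infty(M)$. The decisive feature of these charts is that $\phi$ is a \emph{strict} contactomorphism precisely when $F_\phi$ lies in $C^\infty(M)^\Xi$, i.e.\ when the graph sits in the slice $r=1$; conversely any $\Xi$-invariant function in the chart produces a strict contactomorphism. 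Hence the chart of the big group is automatically adapted: it carries $U\cap\gC\go\gn(M,\eta)$ onto the intersection of the chart image with the closed subspace $\gc\go\gn(M,\eta)\approx C^\infty(M)^\Xi$ (closed by Lemma \ref{closedalg}), which is the definition of an embedded submanifold chart. In other words, where your approach must manufacture a transverse slice and then prove a local diffeomorphism statement that the Fr\'echet category does not support, the paper's chart construction makes the subgroup appear linearly from the start, and the only analytic input is the continuity of the differential operator $\xi$, not its invertibility.
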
 

\begin{proof}
We need to show that $\gC\go\gn(M,\eta)$ is a Fr\'echet submanifold of $\gC\go\gn(M,\cald).$ Since contact structures are first order and we are working in the $C^\infty$-Fr\'echet category, checking $C^1$-closeness is enough to imply $C^\infty$-closeness. The proof uses the construction of local charts due to Ly\v{c}hagin \cite{Lyc75,Lyc79} as presented in \cite{Ban97}. This construction is an adaptation of an argument of Weinstein in the symplectic category. Choosing a 1-form $\eta$, one identifies an element $\phi\in \gC\go\gn(M,\cald)^+$ with its graph $\grG_\phi$ viewed as a Legendrian submanifold of the contact manifold $M\times M\times \bbr^+$ with contact form $\gra=r\pi_1^*\eta-\pi_2^*\eta,$ where $\pi_i$ is the projection onto the first (second) factor for $i=1(2),$ respectively, and $r$ is the coordinate of $\bbr^+.$ Then $(M,\eta)$ is a Legendrian submanifold of the contact manifold $(M\times M\times \bbr,\gra)$ precisely when the graph of a diffeomorphism is the graph of a contactomorphism, i.e. given by $\grG_\phi(x)=(x,\phi(x),f_\phi(x)).$ Notice that the graph of the identity map is $\grG_{\rm id}=\grD\times \{1\}$, where $\grD$ is the diagonal in $M\times M.$ Locally one then models this in terms of Legendrian submanifolds of the 1-jet bundle $J^1(M).$ Here submanifolds are Legendrian with respect to the canonical 1-form $\theta$ on $J^1(M)$ precisely when the sections of $J^1(M)$ are {\it holonomic}, that is, the 1-jets of a function. So there is a contactomorphism $\Psi$ between a tubular neighborhood $\caln$ of $\grD\times \{1\}$ in $M\times M\times \bbr^+$ and a tubular neighborhood $\calv$ of the zero section in $J^1(M)$ which satisfies $\Psi^*\theta=\gra=r\pi_1^*\eta-\pi_2^*\eta.$ So if $U$ is a neighborhood of the identity in $\gC\go\gn(M,\cald),$ there is a diffeomorphism $\varTheta$ between $U$ and the subspace of $\caln$ consisting of Legendrian submanifolds in $\caln$  defined by $\varTheta(\phi)=\grG_\phi$. Composing this with $\Psi$ gives a diffeomorphism $\Psi\circ\varTheta$ from $U$ to the subspace of holonomic sections of $J^1(M)$ in $\calv$ given by 
\begin{equation}\label{Lycmap}
\Psi\circ\varTheta(\phi)=j^1(F_\phi)
\end{equation} 
where the function $F_\phi$ is uniquely determined by $\phi.$

As a Fr\'echet space $\gC\go\gn(M,\cald)$ is modelled on the Fr\'echet vector space $\gc\go\gn(M,\cald)$, and $\gC\go\gn(M,\eta)$ is modelled on the Fr\'echet vector space $\gc\go\gn(M,\eta)$ which is closed in $\gc\go\gn(M,\cald)$ by Lemma \ref{closedalg}. However, by the isomorphism (\ref{liealgiso}) it is convenient to
use the vector space $C^\infty(M)$ as the local model. The reason being that we can identify $C^\infty(M)$ with the holonomic sections of $J^1(M),$ that is, we view $C^\infty(M)$ as the subspace of holonomic sections $C^\infty(M)\subset \grG(M,J^1(M))$ with the identification $F\leftrightarrow j^1(F).$ This gives a diffeomorphism $\Psi\circ\varTheta$ between $U$ and a neighborhood of zero in $C^\infty(M)$ which provide the local charts for $\gC\go\gn(M,\cald)$.

Now let $\phi\in \gC\go\gn(M,\eta)$, then $F_\phi\in C^\infty(M)^\Xi.$ Consider $tF_\phi\in C^\infty(M)^\Xi$ for all $t\in [0,1],$ and let $\phi_t$ denote the contactomorphism corresponding to $j^1(tF_\phi)$ under the diffeomorphism $(\Psi\circ\Theta)^{-1}.$ Then $\phi_0={\rm id}$ and $\phi_1=\phi.$ Since $tF_\phi$ is $C^1$-close to $0$ for all $t\in [0,1]$ and $j^1(F_\phi)$ is $C^2$-close to $0$, we have 
$$||Dj^1(tF_\phi)||\leq ||Dj^1(F_\phi)||< \gre, \qquad \forall t\in [0,1]$$ 
whenever $\phi$ is $C^1$-close to the identity in $\gC\go\gn(M,\cald)$. So $tj^1(F_\phi)\in V,$ implying that $\phi_t=(\Psi\circ\Theta)^{-1}(tj^1(F_\phi))$ is in $U.$ Now since 
$$\Psi(\grG_{\phi_t})=j^1(tF_\phi)\in C^\infty(M)^\Xi,$$
$\grG_{\phi_t}$ must have the form $\grG_{\phi_t}(x)=(x,\phi_t(x),1)$ for all $t\in [0,1]$ which implies that $\phi_t\in  U\cap \gC\go\gn(M,\eta)$ for all $t\in [0,1].$ This shows that $\Psi\circ\Theta( U\cap \gC\go\gn(M,\eta))= \Psi\circ\Theta(U)\cap \gc\go\gn(M,\eta)$ which implies that $\gC\go\gn(M,\eta)$ is an embedded submanifold of $\gC\go\gn(M,\cald)$.
\end{proof}

\subsection{Tori in $\gC\go\gn(M,\cald)$}
Next we mention an important observation of Lerman \cite{Ler02a,BG05} concerning Lie group actions that preserve the contact structure. Here I state this only for the case at hand, namely, compact Lie groups. 

\begin{lemma}\label{lerlem}
Let $(M,\cald)$ be a compact contact manifold, and 
let $G$ be a compact Lie subgroup of $\gC\go\gn_0(M,\cald).$ Then there exists a $G$-invariant contact form $\eta\in \gC^+(\cald)$ such that $\cald ={\rm ker}{~\eta},$ or otherwise stated: there exists a contact 1-form $\eta$ in the contact structure such that $G\subset \gC\go\gn(M,\eta).$
\end{lemma}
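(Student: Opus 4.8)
The plan is to produce the invariant form by averaging an arbitrary reference form over $G$ against its normalized Haar measure; the essential point is that preservation of the co-orientation forces all the relevant conformal factors to be \emph{strictly positive}, so that the average cannot degenerate.

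First I would fix any contact form $\eta_0\in\gC^+(\cald)$. For each $g\in G$ we have $g\in\gC\go\gn(M,\cald)$, so $g^*\eta_0=f_g\eta_0$ for some nowhere vanishing $f_g\in C^\infty(M)^*$; and since $G\subset\gC\go\gn_0(M,\cald)\subset\gC\go\gn(M,\cald)^+$, each $g$ preserves the co-orientation, whence $f_g\in C^\infty(M)^+$ is strictly positive. Let $d\mu$ denote the bi-invariant Haar measure on the compact group $G$ normalized to total mass $1$, and set
\[
\eta=\int_G g^*\eta_0\,d\mu(g).
\]
Because the $G$-action on $M$ is smooth, the map $g\mapsto g^*\eta_0$ is a smooth family of $1$-forms parametrized by the compact manifold $G$, so this integral defines a smooth $1$-form on $M$.

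Next I would identify $\eta$ explicitly. Since $g^*\eta_0=f_g\eta_0$ for every $g$, linearity of the integral gives
\[
\eta=\Bigl(\int_G f_g\,d\mu(g)\Bigr)\eta_0=F\eta_0,\qquad F:=\int_G f_g\,d\mu(g).
\]
As each $f_g>0$, the function $F$ is a strictly positive smooth function, i.e.\ $F\in C^\infty(M)^+$. Hence $\eta=F\eta_0$ is again a contact form with $\ker\eta=\ker\eta_0=\cald$ and with the same co-orientation, so $\eta\in\gC^+(\cald)$; in particular I need not reverify the contact condition $\eta\wedge(d\eta)^n\neq0$, since it is automatic for a strictly positive multiple of $\eta_0$. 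Finally, $G$-invariance follows from invariance of the Haar measure: for $h\in G$, using $(gh)^*=h^*g^*$ together with the right-invariance of $d\mu$,
\[
h^*\eta=\int_G h^*g^*\eta_0\,d\mu(g)=\int_G (gh)^*\eta_0\,d\mu(g)=\int_G g^*\eta_0\,d\mu(g)=\eta,
\]
which exhibits the asserted $G$-invariant $\eta\in\gC^+(\cald)$, and therefore $G\subset\gC\go\gn(M,\eta)$.

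The only real subtlety — and the place where the hypothesis is used essentially — is the positivity of the $f_g$: if the conformal factors were merely nowhere vanishing one could not conclude that the averaged function $F$ is nonvanishing, and the averaged form might fail to be contact. This is exactly why one assumes $G$ lies in the identity component $\gC\go\gn_0(M,\cald)$ (or at least in the co-orientation-preserving subgroup $\gC\go\gn(M,\cald)^+$). The remaining points, namely the smoothness of the integrated form and the invariance of Haar measure, are routine for compact $G$.
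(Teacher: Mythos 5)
Your proof is correct. The paper does not actually prove Lemma \ref{lerlem} --- it is stated as an observation of Lerman with the proof deferred to \cite{Ler02a,BG05} --- and the argument in those references is exactly yours: since $G\subset \gC\go\gn_0(M,\cald)\subset \gC\go\gn(M,\cald)^+$ the conformal factors $f_g$ are strictly positive, so the Haar average $\eta=F\eta_0$ with $F=\int_G f_g\,d\mu(g)>0$ is again a contact form in $\gC^+(\cald)$, and bi-invariance of the Haar measure gives $G$-invariance.
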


In particular, we are interested in subtori in $\gC\go\gn_0(M,\cald).$ Such tori are partially ordered by inclusion, and we are particularly interested in maximal tori in $\gC\go\gn_0(M,\cald).$ Recall the following

\begin{definition}\label{maxtori}
A torus $T\subset G$ is said to be {\bf maximal} in a group $G$ if $T'\subset G$ is any other torus with $T\subset T'$, then $T'=T.$
\end{definition}

We are interested in maximal tori of the contactomorphism group $\gC\go\gn(M,\cald)$ of a contact manifold $M.$ One should not confuse the definition of maximal given here with that of a contact manifold being toric. For such a manifold a maximal torus has maximal dimension, namely $n+1$ when $\dim M=2n+1.$ In principal there can be many maximal tori, and even having different dimensions. For any torus $T$ we denote its dimension by $\gr(T)$. Then for any torus $T\subset \gC\go\gn(M,\cald)$, we have $1\leq \gr(T)\leq n+1.$ Generally, it may even be true that $\gC\go\gn(M,\cald)$ contains no torus. But this does not happen for contact structures of K-contact type on a compact manifold. We are mainly concerned with special types of torus actions introduced earlier \cite{BG00b,BG05}. 

\begin{definition}\label{Reebtype} Let $(M,\cald)$ be a contact manifold. We say that a torus $T\subset \gC\go\gn(M,\cald)$ is of {\bf Reeb type} if there is a contact 1-form $\eta$ in $\cald$ such that its Reeb vector field lies in the Lie algebra $\gt$ of $T.$ 
\end{definition}

If $T$ is a torus of Reeb type then the set of Reeb vector fields with the co-orientation of $\cald$ fixed that lie in $\gt$ is the open convex cone $\gt^+=\gt\cap \calr^+(\cald)$ in $\gt.$ The following which follows directly from Proposition \ref{centraleta} was noticed by Lerman \cite{Ler02b}:

\begin{lemma}\label{lerlem2}
If $T\subset \gC\go\gn(M,\cald)$ is a torus of Reeb type, then $T\subset \gC\go\gn(M,\eta)$ where $\eta$ is a contact form of $\cald$ whose Reeb vector field lies in the Lie algebra $\gt$ of $T$.
\end{lemma}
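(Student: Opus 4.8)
The plan is to reduce the statement to the Lie algebra level and then integrate back up to the group. First I would recall that a torus $T$ is a compact, connected, abelian Lie subgroup of $\gC\go\gn(M,\cald)$ (indeed $T\subset\gC\go\gn_0(M,\cald)$ by connectedness), so its Lie algebra $\gt\subset\gc\go\gn(M,\cald)$ is abelian. By the hypothesis that $T$ is of Reeb type, the contact form $\eta$ is chosen precisely so that its Reeb vector field $\xi$ lies in $\gt$. Since $\gt$ is abelian and both $\xi$ and any $X\in\gt$ lie in $\gt$, we get $[\xi,X]=0$ for every $X\in\gt$. This is exactly the assertion that $\gt$ is contained in the centralizer $\gz(\xi)$ of $\xi$ in $\gc\go\gn(M,\cald)$.

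The second step is to invoke Proposition \ref{centraleta}, which identifies $\gz(\xi)$ with $\gc\go\gn(M,\eta)$. Combining this with the previous step gives the Lie algebra inclusion $\gt\subset\gc\go\gn(M,\eta)$; that is, $\pounds_X\eta=0$ for every $X\in\gt$.

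The final step is to pass from the infinitesimal statement to the group statement. Each $X\in\gt$ generates a one-parameter subgroup $t\mapsto\exp(tX)$, realized as the flow of $X$, which is complete since $M$ is compact. The standard identity $\frac{d}{dt}(\exp(tX))^*\eta=(\exp(tX))^*\pounds_X\eta=0$ then shows that $(\exp(tX))^*\eta=\eta$ for all $t$, so $\exp(tX)\in\gC\go\gn(M,\eta)$. Because $T$ is compact, connected, and abelian, the exponential map is surjective and $T=\exp(\gt)$; hence $T\subset\gC\go\gn(M,\eta)$, as desired.

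The argument is essentially immediate once Proposition \ref{centraleta} is in hand, which is consistent with the remark in the text that the lemma follows directly from it. The only point requiring a little care is the integration step: one uses connectedness of $T$ together with surjectivity of the exponential map for a compact connected abelian group to promote the infinitesimal invariance $\pounds_X\eta=0$ to the global invariance $\phi^*\eta=\eta$ for all $\phi\in T$. I do not anticipate any genuine obstacle here.
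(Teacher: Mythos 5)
Your proof is correct and follows exactly the route the paper intends: the paper derives Lemma \ref{lerlem2} directly from Proposition \ref{centraleta}, and your argument simply makes this explicit by noting that $\xi\in\gt$ and commutativity of $\gt$ give $\gt\subset\gz(\xi)=\gc\go\gn(M,\eta)$, then integrating via the flow identity and surjectivity of $\exp$ on the compact connected abelian group $T$. No gaps; the integration step you flag is exactly the standard one and works as you describe.
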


As we shall see below there can be many such $\eta.$ Moreover, they will all give K-contact structures, that is:

\begin{proposition}\label{reebkcon}
A compact contact manifold $(M,\cald)$ has a torus of Reeb type in its contactmorphism group $\gC\go\gn(M,\cald)$ if and only if it has a compatible K-contact metric structure. Hence, $\cald$ is of K-contact type.
\end{proposition}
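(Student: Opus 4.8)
The plan is to prove the two implications separately by a compactness/averaging argument; the substantive content is to produce, from the torus, a Reeb-invariant compatible structure (and conversely, from a K-contact structure, a torus containing the Reeb flow).

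For the implication that K-contact type forces a torus of Reeb type, suppose $\cals=(\xi,\eta,\Phi,g)$ is a K-contact structure representing $\cald$. Then $\xi$ is a Killing field ($\pounds_\xi g=0$) and preserves the contact form ($\pounds_\xi\eta=0$). Since $M$ is compact, $\mathrm{Isom}(M,g)$ is a compact Lie group (Myers--Steenrod), and the closure of the one-parameter subgroup generated by the flow of $\xi$ is a torus $T\subset\mathrm{Isom}(M,g)$. The condition $\psi^*\eta=\eta$ is closed in $\mathrm{Isom}(M,g)$ and holds along the flow of $\xi$, hence on the whole closure $T$; thus $T\subset\gC\go\gn(M,\eta)\subset\gC\go\gn(M,\cald)$. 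By construction the flow of $\xi$ lies in $T$, so $\xi\in\gt$ and $T$ is a torus of Reeb type.

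For the converse, let $T\subset\gC\go\gn(M,\cald)$ be a torus of Reeb type. By Lemma \ref{lerlem2} there is a contact form $\eta\in\gC^+(\cald)$ with Reeb field $\xi\in\gt$ and with $T\subset\gC\go\gn(M,\eta)$, so $T$ preserves $\eta$, and hence preserves $d\eta$, the splitting $TM=\cald\oplus L_\xi$, and the transverse symplectic form $\omega:=d\eta|_\cald$. First I average an arbitrary fibre metric on $\cald$ over $T$ with respect to Haar measure to obtain a $T$-invariant bundle metric $h$ on $\cald$. Next I compatibilize $h$ with $\omega$ by the standard polar decomposition: writing $\omega(X,Y)=h(AX,Y)$ defines an $h$-skew-adjoint invertible endomorphism $A$ of $\cald$, and $J:=(\sqrt{-A^2})^{\,-1}A$ is a complex structure on $\cald$, with signs arranged so that $J\in\calj(\cald)$ in the sense of Definition \ref{compatibleJ}. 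Because $A$ is built fibrewise and algebraically from the $T$-invariant data $h$ and $\omega$, it and its symmetric positive square root $\sqrt{-A^2}$ are $T$-invariant, so $J$ is $T$-invariant as well.

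Finally I extend $J$ to $\Phi$ by $\Phi|_\cald=J,\ \Phi\xi=0$ and take the canonical contact metric $g$ of the form (\ref{conmet}), obtaining $\cals=(\xi,\eta,\Phi,g)$ via Proposition \ref{canconmet}. Since $\xi\in\gt$, the Reeb flow is a one-parameter subgroup of $T$, and the $T$-invariance of $\Phi$ and $g$ yields $\pounds_\xi\Phi=0=\pounds_\xi g$; thus $\cals$ is K-contact and $(M,\cald)$ is of K-contact type. The main obstacle is this converse construction: one must produce a genuinely compatible almost complex structure (positivity included) that is simultaneously $T$-invariant, and the point that makes it work is that the polar decomposition is a canonical fibrewise-algebraic operation on the pair $(h,\omega)$, so equivariance is automatic once $h$ has been averaged. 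The closedness arguments in both directions, namely that the preservation of $\eta$ and the invariance of $J$ survive the relevant closures and averages, are routine.
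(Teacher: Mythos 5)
Your proof is correct, but the converse direction takes a genuinely different route from the paper's. The paper's argument also starts from Lemma \ref{lerlem2}, but it then averages a metric $g$ \emph{compatible} with $\eta$ over $T$ and asserts that the resulting $T$-invariant contact metric structure $(\xi,\eta,\Phi,g)$ is K-contact; as written this glosses over the fact that averaging generally destroys compatibility --- a point the author concedes explicitly in the proof of Theorem \ref{ReebKcon}, where the same step is repaired by invoking Rukimbira's theorem that a Riemannian characteristic foliation admits a $\xi$-invariant compatible metric. You sidestep this subtlety entirely: by averaging only a fibre metric $h$ on $\cald$ and then compatibilizing with $\omega=d\eta|_\cald$ via the fibrewise polar decomposition, you get a $T$-invariant $J\in\calj(\cald)$ for free (naturality of the polar decomposition gives equivariance once $h$ is invariant), and the canonical contact metric (\ref{conmet}) of Proposition \ref{canconmet} is then automatically $T$-invariant, hence $\xi$-invariant since $\xi\in\gt$. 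What your route buys is self-containedness and the elimination of the compatibility-after-averaging issue; what the paper's route buys is brevity and consistency with the argument pattern it reuses in Theorem \ref{ReebKcon}. The forward direction is essentially the same in both: you merely spell out what the paper compresses into ``the closure of any Reeb orbit is a torus,'' namely that the closure is taken inside the compact group ${\rm Isom}(M,g)$ (Myers--Steenrod) and that the closed condition $\psi^*\eta=\eta$ survives passage to that closure, so the resulting torus lies in $\gC\go\gn(M,\eta)$ and contains $\xi$ in its Lie algebra.
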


\begin{proof}
Suppose $(M,\cald)$ is of K-contact type, then the closure of the any Reeb orbit is a torus, which by definition is of Reeb type. Conversely, if $\gC\go\gn(M,\cald)$ has a torus of Reeb type, Lemma \ref{lerlem2} implies that $T\subset \gC\go\gn(M,\eta)$ for some contact 1-form $\eta.$ Let $g$ be a Riemannian metric compatible with $\eta.$ We can integrate over $T$ to obtain a $T$-invariant metric which we also denote by $g$, but generally we lose compatibility. However, since $T$ contains the flow of the Reeb vector field $\xi$, the characteristic foliation $\calf_\xi$ is Riemannian, and as in Theorem 1 of \cite{Ruk95a} we obtain a $\xi$-invariant compatible metric $g$. But since $d\eta,\xi$ and $g$ determine $\Phi$ and all are invariant under $\xi$, the newly constructed contact metric structure $\cals=(\xi,\eta,\Phi,g)$ will be K-contact. 
\end{proof}

Let $T\subset \gC\go\gn(M,\cald)$ be a torus of Reeb type and consider the moment map (\ref{momentmap}) restricted to $T$, that is, $\mu_\eta:M\ra{1.5} \gt^*$ given by 
\begin{equation}\label{momentmap2}
\langle \mu_\eta(x),\grt\rangle=\eta(\grt)(x)
\end{equation}
where $\eta$ is any 1-form in $\gC(\cald)$ whose Reeb vector field belongs to the Lie algebra $\gt$ of $T,$ and $\gt^*$ is the dual co-algebra. We have

\begin{lemma}\label{not0}
Let $T\subset \gC\go\gn(M,\cald)$ be a torus of Reeb type. Then $0\in \gt^*$ is not in the image of the moment map $\mu_{\eta'}$ for any $\eta'\in \gC(\cald).$ 
\end{lemma}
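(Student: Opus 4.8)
The plan is to exploit the single distinguished element of $\gt$ that the Reeb-type hypothesis hands us. By Definition \ref{Reebtype}, because $T$ is of Reeb type there is a contact form $\eta\in\gC^+(\cald)$ whose Reeb vector field $\xi$ lies in the Lie algebra $\gt$. I would keep this $\xi\in\gt$ fixed throughout, and demonstrate that $\mu_{\eta'}(x)$ is nonzero by exhibiting a single element of $\gt$ on which it does not vanish---namely $\xi$ itself.

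First I would reduce any competing form to $\eta$. Given an arbitrary $\eta'\in\gC(\cald)$, since both $\eta'$ and $\eta$ represent the same contact structure $\cald$ there is a nowhere-vanishing function $f\in C^\infty(M)^*$ with $\eta'=f\eta$. Next I would evaluate the moment map (\ref{momentmap2}) on the fixed vector $\xi\in\gt$: for every $x\in M$,
\[
\langle \mu_{\eta'}(x),\xi\rangle = \eta'(\xi)(x) = f(x)\,\eta(\xi)(x) = f(x),
\]
using only the defining property $\eta(\xi)\equiv 1$ of the Reeb vector field. Since $f$ is nowhere vanishing, $\langle\mu_{\eta'}(x),\xi\rangle=f(x)\neq 0$ for all $x$, so $\mu_{\eta'}(x)\neq 0$ as a functional on $\gt$. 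As $x$ was arbitrary, $0\in\gt^*$ is not in the image of $\mu_{\eta'}$, and as $\eta'$ was arbitrary the conclusion holds for every $\eta'\in\gC(\cald)$.

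There is no real obstacle here: the whole content is the observation that the Reeb-type condition pins down one direction $\xi\in\gt$ along which the contraction $\eta'(\xi)=f$ is forced to be nonvanishing, independently of which representative $\eta'$ we pick. The only point requiring a word of care is that the identity $\eta(\xi)\equiv 1$ is attached to the specific form $\eta$ produced by the Reeb-type hypothesis, not to the arbitrary $\eta'$; this is precisely why I factor $\eta'=f\eta$ and evaluate against that same $\eta$, rather than trying to read $\xi$ as the Reeb field of $\eta'$ (which in general it is not). Equivalently, one may package the computation as $\mu_{\eta'}=f\mu_\eta$ together with $\langle\mu_\eta(x),\xi\rangle=1$, as already noted following (\ref{momentmap}).
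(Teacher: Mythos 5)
Your proof is correct and is essentially identical to the paper's: both hinge on the fact that for the Reeb field $\xi\in\gt$ of the form $\eta$ supplied by the Reeb-type hypothesis, one has $\langle\mu_{\eta'}(x),\xi\rangle=\eta'(\xi)(x)=f(x)\neq 0$, where $\eta'=f\eta$. Indeed, the packaging you mention at the end---$\mu_{\eta'}=\eta'(\xi)\mu_\eta$ with $\mu_\eta(M)$ lying in the hyperplane $\langle\cdot,\xi\rangle=1$---is exactly how the paper phrases it.
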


\begin{proof}
Since $T$ is of Reeb type there is an $\eta\in \gC(\cald)$ whose Reeb vector field $\xi$ lies in $\gt$, so the image $\mu_\eta(M)$ lies in the characteristic hyperplane $\langle\mu_\eta(x),\xi\rangle=1$. Then for any other $\eta'\in \gC(\cald)$ we have $\eta'=\eta'(\xi)\eta$ with $\eta'(\xi)(x)\neq 0$ for all $x\in M.$ So we have $\mu_{\eta'}(x)=\eta'(\xi)(x)\mu_\eta(x)$ for all $x\in M$ proving the result. 
\end{proof}

\begin{remark}\label{notReeb}
{\rm It is clear that the lemma fails generally for tori that are not of Reeb type. Just take any circle such that $\eta(\grt)=0$ has solutions. However, it does hold for any toric contact structure whether or not it is of Reeb type \cite{Ler02a}}.
\end{remark}

In order to treat the contact 1-forms on an equal footing, we consider the annihilator $\cald^o$ of $\cald$ in $T^*M$. Choosing a contact 1-form $\eta$ trivializes the bundle
$\cald^o\approx M\times \bbr,$ as well as chooses an orientation
of $\cald^o,$ and splits $\cald^o\setminus \{0\}=\cald^o_+\cup \cald^o_-$. Then as in \cite{Ler02a} we can think of the moment map (\ref{momentmap2}) as a map $\Upsilon:\cald^o_+\ra{1.6} \gt^*$ given by
\begin{equation}\label{tormom}
\langle\Upsilon(x,\eta),\grt\rangle =\langle \mu_\eta(x),\grt\rangle=\eta(\grt)(x).
\end{equation}
The image of this moment map is a convex cone in $\gt^*$ without its cone point. Then the {\it moment cone} $C(\cald,T)$ is ${\rm im}\Upsilon\cup \{0\}$.

Lerman \cite{Ler02a} also observed that in the case of toric contact structures, a choice of norm $||\cdot||$ in $\gt^*$ picks out a unique 1-form $\eta\in\gC(\cald)$ such that $||\mu_\eta(x)||=1.$ This works equally well for tori of Reeb type. Let $\xi\in\gt^+$ and for any $\eta'\in \gC^+(\cald)$ set $||\mu_{\eta'}||=\eta'(\xi)$, then the contact form $\eta$ whose Reeb vector field is $\xi$ satisfies $||\mu_{\eta}||=1,$ and any other $\eta'\in \gC^+(\cald)$ is given by $\eta'=||\mu_{\eta'}||\eta.$

We are interested in the set of conjugacy classes of maximal tori in $\gC\go\gn(M,\cald)$. Note the following:
\begin{lemma}\label{adReeb}
If $T\subset \gC\go\gn(M,\cald)$ is a maximal torus of Reeb type then so is ${\rm Ad}_\phi T$ for all $\phi\in \gC\go\gn(M,\cald)$.
\end{lemma}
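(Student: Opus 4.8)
The statement to prove is Lemma \ref{adReeb}: if $T$ is a maximal torus of Reeb type, then so is ${\rm Ad}_\phi T$ for every $\phi\in\gC\go\gn(M,\cald)$.

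The plan is to verify three things in turn: that ${\rm Ad}_\phi T$ is again a torus, that it is maximal, and that it is of Reeb type. The first is immediate, since conjugation by the diffeomorphism $\phi$ is a Lie group automorphism of $\gC\go\gn(M,\cald)$, so it carries the compact connected abelian subgroup $T$ to an isomorphic such subgroup ${\rm Ad}_\phi T$. Maximality is equally formal: if ${\rm Ad}_\phi T$ were contained in a strictly larger torus $T'$, then applying ${\rm Ad}_{\phi^{-1}}$ would exhibit $T$ inside the strictly larger torus ${\rm Ad}_{\phi^{-1}}T'$, contradicting the maximality of $T$. Thus the only substantive point is that the Reeb-type condition is preserved.

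For the Reeb-type property I would argue at the Lie algebra level using Lemma \ref{R+lem}(2). Since $T$ is of Reeb type, its Lie algebra $\gt$ contains some $\xi\in\calr^+(\cald)$, namely the Reeb vector field of a contact form $\eta\in\gC^+(\cald)$. The Lie algebra of the conjugated torus is ${\rm Ad}_\phi\gt$, which contains ${\rm Ad}_\phi\xi$. By Lemma \ref{R+lem}(2) the cone $\calr^+(\cald)$ is invariant under the adjoint action of $\gC\go\gn(M,\cald)$, so ${\rm Ad}_\phi\xi\in\calr^+(\cald)$ is again the Reeb vector field of some contact form in $\gC^+(\cald)$. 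Hence ${\rm Ad}_\phi\gt$ contains a Reeb vector field, which is precisely the condition that ${\rm Ad}_\phi T$ is of Reeb type per Definition \ref{Reebtype}.

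I do not expect any serious obstacle here; the lemma is essentially a bookkeeping statement asserting that all three defining properties behave functorially under the conjugation action. The one point deserving care is that $\phi$ need not lie in the co-orientation-preserving subgroup $\gC\go\gn(M,\cald)^+$, so a priori ${\rm Ad}_\phi$ could send the positive cone $\calr^+(\cald)$ to the negative one; but Lemma \ref{R+lem}(2) is stated for the full group $\gC\go\gn(M,\cald)$, and its proof shows $\eta({\rm Ad}_\phi\xi)=f_{\phi^{-1}}\,\eta(\xi)\circ\phi^{-1}>0$ using $f_{\phi^{-1}}>0$, so the positive cone is genuinely preserved and no sign issue arises.
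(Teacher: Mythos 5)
Your proof is correct in its overall architecture and follows the same route as the paper: the paper's entire proof is the single observation that if $\xi\in\gt$ is the Reeb vector field of $\eta$, then ${\rm Ad}_\phi\xi\in{\rm Ad}_\phi\gt$ is the Reeb vector field of the transformed contact form $\phi^*\eta$, with torus-ness and maximality left as the trivial formalities you spell out; your appeal to Lemma \ref{R+lem}(2) is essentially that same computation packaged as a citation. The one place where your write-up goes wrong is precisely the final paragraph, where you address the co-orientation subtlety. If $\phi$ reverses the co-orientation, then $f_\phi$, and hence also $f_{\phi^{-1}}$, is everywhere \emph{negative} on the connected manifold $M$, so the computation $\eta({\rm Ad}_\phi\xi)=f_{\phi^{-1}}\,\eta(\xi)\circ\phi^{-1}$ produces a negative function: ${\rm Ad}_\phi$ genuinely interchanges $\calr^+(\cald)$ with the opposite cone. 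Your assertion that $f_{\phi^{-1}}>0$ for all $\phi$ in the full group restates, rather than resolves, the sign problem; indeed the paper's own proof of Lemma \ref{R+lem}(2) tacitly makes the same positivity assumption, so as written that lemma is only established for $\gC\go\gn(M,\cald)^+$, and leaning on it for the full group inherits the gap.

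The gap is easily repaired, in either of two ways. First, Definition \ref{Reebtype} asks only for \emph{some} contact $1$-form representing $\cald$ whose Reeb field lies in the Lie algebra of the torus, with no co-orientation requirement; since ${\rm Ad}_\phi\xi$ is the Reeb field of $\phi^*\eta$, whose kernel is $\cald$ whatever the sign of $f_\phi$, the Reeb-type property transfers regardless -- this is exactly how the paper's one-line proof sidesteps the issue. Alternatively, if you insist on staying inside the positive cone: whenever ${\rm Ad}_\phi\xi$ lands in the negative cone, its negative $-{\rm Ad}_\phi\xi={\rm Ad}_\phi(-\xi)$ lies in $\calr^+(\cald)$ and still belongs to ${\rm Ad}_\phi\gt$, which, being a Lie algebra, is closed under negation. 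Either patch makes your argument complete for the full group $\gC\go\gn(M,\cald)$.
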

 
\begin{proof}
One easily checks that if $\xi$ is the Reeb vector field of $\eta$ in $\gt,$ then ${\rm Ad}_\phi\xi$ is the Reeb vector field of $\phi^*\eta$ in ${\rm Ad}_\phi\gt.$ 
\end{proof}

\begin{definition}\label{conclass}
Let $\gS\gC_T(\cald)$ denote the union of the empty set $\emptyset$ with the set of conjugacy classes of tori in $\gC\go\gn(M,\cald)$, and let $\gS\gC_T^{max}(\cald)$ denote the subset of conjugacy classes of maximal tori. We let $\gn(\cald)$ denote the cardinality of $\gS\gC_T^{max}(\cald)$, and $\gn(\cald,\gr)$ denote the cardinality of the subset $\gS\gC_T^{max}(\cald,\gr)$ of $\gS\gC_T^{max}(\cald)$ consisting of conjugacy classes of maximal tori in $\gC\go\gn(M,\cald)$ of dimension $\gr$. Similarly, we let $\gS\gC_{RT}(\cald), \gS\gC_{RT}^{max}(\cald),$ and $\gS\gC_{RT}^{max}(\cald,\gr)$ denote the corresponding subsets of $\gS\gC_T(\cald)$ ( consisting of conjugacy classes of tori of Reeb type. Then we denote by $\gn_R(\cald)$ (respectively, $\gn_R(\cald,\gr)$) the cardinality of $\gS\gC_{RT}^{max}(\cald)$ (respectively, of $\gS\gC_{RT}^{max}(\cald,\gr)$). 
\end{definition}

Note that $\gS\gC_T(\cald)$ is partially ordered by inclusion, and $0\leq \gr\leq n+1.$
Clearly we have $\gn_R(\cald,\gr)\leq \gn(\cald,\gr)$ and $\gn_R(\cald)\leq \gn(\cald),$ and all are invariants of the contact structure. So we will sometimes write $\gn([\cald])$ and $\gn_R([\cald]),$ etc., where $[\cald]$ denotes the isomorphism class of the contact structure $\cald.$ Note also that we have decompositions 
\begin{equation}\label{decomp}
\gS\gC_T^{max}(\cald)=\bigsqcup_{\gr=0}^{n+1}\gS\gC_T^{max}(\cald,\gr),\qquad \gS\gC_{RT}^{max}(\cald)=\bigsqcup_{\gr=1}^{n+1}\gS\gC_{RT}^{max}(\cald,\gr)
\end{equation}
where we set $\gS\gC_T^{max}(\cald,0)=\emptyset.$ We have also $\gS\gC_{RT}^{max}(\cald,\gr)\subset \gS\gC_T^{max}(\cald,\gr)$ and $\gS\gC_{RT}^{max}(\cald)\subset \gS\gC_T^{max}(\cald)$.

\section{Compatible Almost Complex structures}\label{secalmostcomp}

Recall from Section \ref{almostCR} that the space of compatible almost complex structures $\calj(\cald)$ on the contact bundle $\cald$ can be identified with partially integrable strictly pseudoconvex almost CR structures. So we think of $\calj(\cald)$ as the {\it space of all partially integrable strictly pseudoconvex almost CR structures $(\cald,J)$ whose codimension one subbundle of $TM$ is $\cald.$} Alternatively, $\calj(\cald)$ can be thought of as the space of all splittings $\cald\otimes \bbc=\cald^{1,0}\oplus \cald^{0,1}$ where $\cald^{1,0}(\cald^{0,1})$ are the $\pm i$ eigenspaces of $J,$ respectively. The set $\calj(\cald)$ is given the topology as a subspace of $\grG(M,{\rm End}(\cald))$, and with this topology we have

\begin{proposition}\label{Jcontr}
Let $(M,\cald)$ be a compact contact manifold. Then
\begin{enumerate}
\item The space $\calj(\cald)$ is non-empty and contractible.
\item $\calj(\cald)$ is a smooth Fr\'echet manifold whose tangent space $T_J\calj(\cald)$ at $J\in \calj(\cald)$ is the Fr\'echet vector space of smooth symmetric (with respect to $g_\cald$) endomorphisms of $\cald$ that anticommute with $J$. 
\end{enumerate} 
\end{proposition}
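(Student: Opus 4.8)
The plan is to reduce the whole statement to fibrewise symplectic linear algebra and then pass to sections. First I would fix a contact form $\eta\in\gC^+(\cald)$; by Proposition \ref{compwd} the set $\calj(\cald)$ is independent of this choice, while $\omega:=d\eta|_\cald$ turns $\cald$ into a symplectic vector bundle of rank $2n$. Over each point $x\in M$ the relevant fibre is the space of $\omega_x$-compatible complex structures on the symplectic vector space $(\cald_x,\omega_x)$, which by the classical theory is the homogeneous space $Sp(2n,\bbr)/U(n)$, a contractible finite-dimensional manifold. Thus $\calj(\cald)$ is the space of smooth sections of a smooth fibre bundle $\pi:E\to M$ with fibre $Sp(2n,\bbr)/U(n)$, and both assertions will follow once this is made precise.

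For part (1) I would exhibit $\calj(\cald)$ as a retract of a convex space by fibrewise polar decomposition. Let $\mathrm{Met}(\cald)$ be the space of smooth bundle metrics on $\cald$: it is a non-empty convex subset of the Fr\'echet space of smooth sections of $S^{2}\cald^{*}$, hence contractible. Given $h\in\mathrm{Met}(\cald)$, I define $A_h$ by $\omega(X,Y)=h(A_hX,Y)$; then $A_h$ is $h$-skew, $P_h:=(-A_h^2)^{1/2}$ is $h$-symmetric and positive, and $J_h:=-A_hP_h^{-1}$ satisfies $J_h^2=-\BOne$ and is compatible in the sense of Definition \ref{compatibleJ} (indeed $d\eta(J_hX,X)=h(P_hX,X)>0$). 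Smooth functional calculus for positive symmetric endomorphisms makes $h\mapsto J_h$ a continuous map $P:\mathrm{Met}(\cald)\to\calj(\cald)$, and applying it to any metric already yields $\calj(\cald)\neq\emptyset$. The assignment $\sigma(J)(X,Y):=g_\cald(X,Y)=d\eta(JX,Y)$ is a continuous section $\sigma:\calj(\cald)\to\mathrm{Met}(\cald)$, and a one-line computation gives $A_{\sigma(J)}=-J$ and $P_{\sigma(J)}=\BOne$, so that $P\circ\sigma={\rm id}$. Hence $\calj(\cald)$ is a retract of the contractible space $\mathrm{Met}(\cald)$ and is therefore contractible.

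For part (2) I would invoke the general fact that the smooth sections of a smooth fibre bundle with finite-dimensional fibre over a compact manifold form a smooth Fr\'echet manifold, modelled at a section $J$ on the sections of the pullback $J^{*}T^{\rm vert}E$ of the vertical tangent bundle along $J$ (see \cite{KrMi97,Ban97}). It then remains to identify this tangent space pointwise. Differentiating $J_t^2=-\BOne$ along a smooth path with $J_0=J$ gives $\dot JJ+J\dot J=0$, so $\dot J$ anticommutes with $J$; differentiating the compatibility $d\eta(J_tX,J_tY)=d\eta(X,Y)$ gives $d\eta(\dot JX,JY)+d\eta(JX,\dot JY)=0$. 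Substituting $Y\mapsto-JY$ and using both the anticommutation relation and the compatibility of $J$ itself, this reduces first to $d\eta(\dot JX,Y)+d\eta(X,\dot JY)=0$ (i.e. $\dot J$ is $d\eta$-skew) and then, after one more use of anticommutation, to $g_\cald(\dot JX,Y)=g_\cald(X,\dot JY)$ (i.e. $\dot J$ is $g_\cald$-symmetric); the strict inequality in Definition \ref{compatibleJ} is open and imposes no linear condition. Conversely, any $g_\cald$-symmetric endomorphism anticommuting with $J$ is $d\eta$-skew, hence lies in $\mathfrak{sp}(\cald_x,\omega_x)$ in the complement of the stabiliser subalgebra $\mathfrak{u}$ of $J$, so these endomorphisms exhaust the tangent space of the fibre, which is $\cong\mathfrak{sp}/\mathfrak{u}$. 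This identifies $T_J\calj(\cald)$ with the stated Fr\'echet space.

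The only point needing genuine care, and hence the main obstacle, is verifying the smoothness claims underlying the fibre-bundle picture: that $h\mapsto(-A_h^2)^{1/2}$ is smooth in the $C^\infty$-Fr\'echet topology (smooth functional calculus for positive symmetric endomorphisms), and that the resulting retraction $P$ and section $\sigma$ are continuous there, so that the retraction argument is valid in the Fr\'echet category and not merely fibrewise. Granting the manifold-of-sections formalism of \cite{KrMi97}, the remaining work is classical symplectic linear algebra applied fibrewise together with the linearisation computation above, which is forced by the two defining relations $J^2=-\BOne$ and $d\eta(J\,\cdot,J\,\cdot)=d\eta(\cdot,\cdot)$.
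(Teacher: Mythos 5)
Your proof is correct and is essentially the paper's own approach: the paper disposes of the proposition by citing \cite{Gei08} for (1) and \cite{Smo01} for (2), and those references contain precisely the arguments you reconstruct --- the fibrewise polar-decomposition retraction of the (convex, contractible) space of bundle metrics onto $\calj(\cald)$, and the manifold-of-sections formalism together with the linearisation of $J^2=-\BOne$ and of the compatibility condition to identify $T_J\calj(\cald)$ with the $g_\cald$-symmetric, $J$-anticommuting endomorphisms. Your write-up supplies the details the paper leaves to the literature, so nothing further is needed.
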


\begin{proof} 
The proof of (1) is standard and can be found, for example, in \cite{Gei08}.
The proof of (2) is straightforward and can be found in \cite{Smo01}.
\end{proof}


\subsection{The Subgroup of Almost CR Transformations}\label{almostCRtranssec}
Let $(\cald,J)$ be an almost CR structure on $M.$ We define the {\it group of almost CR transformations} by 
\begin{equation}\label{CRtrans}
\gC\gR(\cald,J)=\{\phi\in \gD\gi\gf\gf(M)\; | \; \phi_*\cald\subset \cald,~  \phi_*J=J\phi_*\}\, .
\end{equation}
If $(\cald,J)$ is strictly pseudoconvex $\gC\gR(\cald,J)$ is known \cite{ChMo74,BRWZ04} to be a Lie transformation group.
Moreover, in this case $\gC\gR(\cald,J)$ preserves the co-orientation, so $\gC\gR(\cald,J)\subset\gC\go\gn(M,\cald)^+$. Suppose now that $(\cald,J)$ is a contact structure with a compatible almost complex structure. Then
the group $\gC\go\gn(M,\cald)^+$ acts smoothly on the manifold $\calj(\cald)$ by $J\mapsto \phi_*J\phi^{-1}_*$ for $\phi\in \gC\go\gn(M,\cald)^+$ and partitions it into orbits. We say that $J$ and $J'$ in $\calj(\cald)$ are {\it equivalent} if they lie on the same orbit under this action. The isotropy subgroup at $J\in \calj(\cald)$ is precisely the group $\gC\gR(\cald,J),$ and it follows from Theorem III.2.2 of \cite{Omo97} that $\gC\gR(\cald,J)$ is a closed Lie subgroup of $ \gC\go\gn(M,\cald)^+.$
The Lie algebra $\gc\gr(\cald,J)$ of  $\gC\gR(\cald,J)$
can be characterized as
\begin{equation}\label{infcrtrans}
\gc\gr(\cald,J)=\{X\in \gc\go\gn(M,\cald)\; | \;  \pounds_XJ=0\}\, .
\end{equation}

When the strictly pseudoconvex almost CR structure is integrable, the fact that $\gC\gR(\cald,J)$ is a compact Lie group except for the standard CR structure on the sphere $S^{2n+1}$ has a long and varied history \cite{ChMo74,Web77,Sch95,Lee96}. However, recent results \cite{CaSc00,Fra07} in the study of Cartan geometries have now allowed one to weaken the hypothesis to that of partial integrability.

\begin{theorem}\label{Schthm} Let $(\cald,J)$ be a contact structure with a compatible almost complex structure on a compact manifold $M$. Then $\gC\gR(\cald,J)$ is a compact Lie group except when $(\cald,J)$ is the standard CR structure on $S^{2n+1}$ in which case $\gC\gR(\cald,J)=SU(n+1,1).$
\end{theorem}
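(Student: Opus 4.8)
The plan is to deduce Theorem \ref{Schthm} from the Frances result \cite{Fra07} by recognizing that a compatible almost complex structure $J\in\calj(\cald)$ on a contact manifold always produces a strictly pseudoconvex partially integrable almost CR structure, and that such structures are exactly the objects to which the Cartan-geometric rigidity theory applies. First I would invoke Proposition \ref{CRcon}, which guarantees that the pair $(\cald,J)$ is a partially integrable strictly pseudoconvex almost CR structure; this is the precise hypothesis under which \cite{CaSc00,Fra07} construct a canonical Cartan connection modeled on the homogeneous space $SU(n+1,1)/P$, where $P$ is the relevant parabolic. The group $\gC\gR(\cald,J)$ is then the automorphism group of this Cartan geometry, which by \cite{ChMo74,BRWZ04} is known to be a (finite-dimensional) Lie transformation group.

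The core of the argument is the automorphism-group dichotomy for parabolic Cartan geometries: for a compact manifold carrying such a geometry, the automorphism group is compact \emph{unless} the geometry is (locally) the flat model, in which case the manifold is the homogeneous space itself and the automorphism group is the full structure group. Concretely, I would cite Frances' generalization \cite{Fra07} of the Lee--Schoen theorem \cite{Lee96,Sch95}, stated in the excerpt as applying to the `almost' (i.e. merely partially integrable) category, to conclude that $\gC\gR(\cald,J)$ is compact except when $(\cald,J)$ is the standard structure on $S^{2n+1}$. The identification of the exceptional automorphism group with $SU(n+1,1)$ in that case is classical and is recorded in \cite{Web77}; I would simply quote it.

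The one point requiring care is the passage from ``integrable'' to ``partially integrable.'' Historically \cite{ChMo74,Web77,Sch95,Lee96} the compactness-or-sphere dichotomy was established only for genuine (integrable) strictly pseudoconvex CR structures, where the Chern--Moser normal Cartan connection is available. What makes the present statement possible is precisely that the Cartan-geometry machinery of \cite{CaSc00} and the dynamical rigidity results of \cite{Fra07} do not require the integrability condition (2) of Section \ref{almostCR}, but only partial integrability, i.e. condition (1)---which, by Proposition \ref{CRcon}, is automatic for any $J\in\calj(\cald)$. So I would emphasize that the strict pseudoconvexity supplied by the positivity $d\eta(JX,Y)>0$ in Definition \ref{compatibleJ} is exactly the nondegeneracy needed to make the Levi form definite, and that this, together with partial integrability, suffices to run the Frances argument verbatim.

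The main obstacle is therefore not a computation but the correct invocation and interpretation of \cite{Fra07}: one must verify that the hypotheses of that theorem are met by an arbitrary compatible $J$ and that its conclusion genuinely covers the partially integrable case rather than only the integrable one. Since the paper treats this as a citation (Theorem \ref{Schthm} is stated as a known consequence of \cite{CaSc00,Fra07}), I expect the proof to consist essentially of assembling Proposition \ref{CRcon} with the quoted rigidity theorem, and I would keep it short, noting that the flat-model case recovers the standard sphere and its automorphism group $SU(n+1,1)$.
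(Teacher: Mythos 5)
Your proposal is correct and follows essentially the same route as the paper's own proof: invoke Proposition \ref{CRcon} to obtain a partially integrable strictly pseudoconvex almost CR structure, cite \cite{CaSc00} for the canonical Cartan connection making it a parabolic geometry, and conclude via Theorem 1 of \cite{Fra07}, with \cite{Web77} handling the exceptional sphere case. The paper's proof is exactly this three-line assembly, so your additional remarks on why partial integrability suffices are a helpful gloss rather than a deviation.
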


\begin{proof}
By Proposition \ref{CRcon} $(\cald,J)$ is a partially integrable strictly pseudoconvex almost CR structure. But according to  \cite{CaSc00} (see also \cite{Arm08}) a partially integrable strictly pseudoconvex almost CR structure is a parabolic geometry with a canonical Cartan connection. So the result follows from Theorem 1 of \cite{Fra07} 
\end{proof}

So when $M$ is compact $\gC\gR(\cald,J)$ will always be a compact Lie group except when $M$ is the standard CR structure on $S^{2n+1}$ which is well understood (cf. \cite{BGS06} and references therein). Since $\gC\gR(\cald,J)=SU(n+1,1)$ in the case of the standard sphere, it has a unique maximal torus up to conjugacy. 

\subsection{Conjugacy Classes of Maximal Tori and Compatible Almost Complex Structures}
Theorem \ref{Schthm} allows us to define a map (not necessarily continuous) as follows: given a compatible almost complex structure $J$ we associate to it the unique conjugacy class $\calc_T(\cald)$ of maximal torus in $\gC\gR(\cald,J)$ if $\dim\gC\gR(\cald,J)>0$, and the empty conjugacy class if $\dim\gC\gR(\cald,J)=0.$ This in turn gives a conjugacy class $\overline{\calc_T(\cald)}$ of tori in the contactomorphism group $\gC\go\gn(M,\cald)$. However, the tori may not be maximal in $\gC\go\gn(M,\cald)$, nor lie in a unique maximal torus even if the almost complex structure is integrable. Nevertheless, there is a sufficiently interesting subspace of $\calj(\cald)$ where the conjugacy class $\overline{\calc_T(\cald)}$ is maximal in $\gC\go\gn(M,\cald)$. For example, by Theorem 5.2 of \cite{BG00b} any toric contact manifold of Reeb type has compatible complex structures $J$ and a maximal torus in $\gC\gR(\cald,J)$ is maximal in $\gC\go\gn(M,\cald)$. There are other non-toric examples treated in \cite{BoTo11}. 

\begin{example}\label{counter}[counterexample] Here are examples of transverse complex structures $J_\tau$ on $T^2\times S^3$ taken from \cite{BoTo11} where the maximal torus in $\gC\gR(\cald_{k,1},J_\tau)$ is not maximal in $\gC\go\gn(T^2\times S^3,\cald_{k,1})$ and, in fact, lies in several non-conjugate maximal tori in $\gC\go\gn(T^2\times S^3,\cald_{k,1})$. Here $\tau$ is a representative complex structure in the moduli space of complex structures on $T^2$. Let $A_{0,\tau}$ denote the non-split complex structure on $T^2\times S^2$ associated with $\tau$. This is compatible with the symplectic form $\gro_{k,1}$ described in Section 4.1 of \cite{BoTo11}. As shown in Section 7 of \cite{BoTo11} this lifts to a Sasakian structure with transverse complex structure $J_\tau$ on $T^2\times S^3$. Moreover, as shown there the only Hamiltonian holomorphic vector field of $J_\tau$ is the Reeb vector field $\xi_{k,1}$. Thus, a maximal torus of $\gC\gR(\cald_{k,1},J_\tau)$ has dimension one and is generated by the Reeb field.  But it follows from Proposition 7.2 of \cite{BoTo11} that this maximal torus is contained in $k$ non-conjugate tori of $\gC\go\gn(T^2\times S^3,\cald_{k,1}).$
\end{example}

Now generally we define a map
\begin{equation}\label{Jmap}
\gQ:\calj(\cald)\ra{1.6} \gS\gC_T(\cald) ~\text{by} ~\gQ(J)=\overline{\calc_T(\cald)},
\end{equation}
that is, $\gQ(J)$ is the conjugacy class of tori in $\gC\go\gn(M,\cald)$ defined by the unique maximal conjugacy class of tori in $\gC\gR(\cald,J)$. Since for all $\phi\in\gC\go\gn(M,\cald)$
\begin{equation}\label{conjtorcont}
\gQ(\phi_*J\phi^{-1}_*)=\overline{\phi \calc_T(\cald)\phi^{-1}}=\overline{\calc_T(\cald)},
\end{equation}
this decends to a map from the $\gC\go\gn(M,\cald)$-orbits in $\calj(\cald)$ to the set $\gS\gC_T(\cald)$ of conjugacy classes of maximal tori in $\gC\go\gn(M,\cald)$, namely 
\begin{equation}\label{Jmap2}
 \bar{\gQ}:\calj(\cald)/\gC\go\gn(M,\cald)\ra{1.6} \gS\gC_T(\cald) ~\text{given by}~\bar{\gQ}([J])=\overline{\calc_T(\cald)}.
\end{equation}
We are mainly interested in those almost complex structures $J$ for which $\gQ(J)\in \gS\gC_T^{max}(\cald)$, that is, for which the conjugacy class $\overline{\calc_T(\cald)}$ consists of maximal tori in $\gC\go\gn(M,\cald)$.

As with conjugacy classes there is a grading of $\calj(\cald)$ according to the rank of $\gC\gR(\cald,J).$ We let $\calj(\cald,\gr)$ denote the compatible almost complex structures such that $\gC\gR(\cald,J)$ has rank $\gr,$ with $\gr=0$ understood if $\gC\gR(\cald,J)$ is finite. Then we have 
\begin{equation}\label{decompJ}
\calj(\cald)=\bigsqcup_{\gr=0}^{n+1}\calj(\cald,\gr).
\end{equation}
The map $\gQ$ clearly preserves the respective decompositions, that is, for each $\gr=0,\ldots,n+1$, $\gQ$ restricts to a map
\begin{equation}\label{Jmap3}
\gQ_\gr:\calj(\cald,\gr)\ra{1.6} \gS\gC_T(\cald,\gr). 
\end{equation}
Clearly the action of $\gC\go\gn(M,\cald)$ also preserves the grading so we have a map on the quotient
\begin{equation}\label{Jmap4}
 \bar{\gQ}_\gr:\calj(\cald,\gr)/\gC\go\gn(M,\cald)\ra{1.6} \gS\gC_T(\cald,\gr).
\end{equation}

By a standard result (cf. Lemma A.4 of \cite{Kar99}) we have
\begin{theorem}\label{surjthm}
The maps (\ref{Jmap}),(\ref{Jmap2}),(\ref{Jmap3}) and (\ref{Jmap4}) are surjective. 
\end{theorem}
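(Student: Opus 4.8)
The plan is to prove surjectivity of the basic map $\gQ$ in (\ref{Jmap}); the other three statements then follow formally. By construction $\bar\gQ$ is the map induced by $\gQ$ on the quotient $\calj(\cald)/\gC\go\gn(M,\cald)$, so surjectivity of $\gQ$ forces surjectivity of $\bar\gQ$; and $\gQ_\gr,\bar\gQ_\gr$ are the restrictions of $\gQ,\bar\gQ$ to the pieces of the decompositions (\ref{decomp}) and (\ref{decompJ}). Since $\gQ$ respects these gradings, once I check that the $J$ produced for a class of rank $\gr$ genuinely lies in $\calj(\cald,\gr)$, the graded statements come for free. Thus it suffices to fix a representative maximal torus $T\subset\gC\go\gn(M,\cald)$ of an arbitrary nonempty class $[T]\in\gS\gC_T(\cald,\gr)$ with $\gr\geq 1$ and produce $J\in\calj(\cald)$ with $\gQ(J)=[T]$.

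First I would linearize the problem by fixing a compatible form. Since $T$ is connected it lies in $\gC\go\gn_0(M,\cald)$, so Lemma \ref{lerlem} furnishes a $T$-invariant contact form $\eta\in\gC^+(\cald)$, i.e.\ $T\subset\gC\go\gn(M,\eta)$; in particular $T$ preserves the fibrewise symplectic form $d\eta|_\cald$, making $(\cald,d\eta|_\cald)$ a $T$-equivariant symplectic vector bundle. Next I average to obtain a $T$-invariant compatible almost complex structure: starting from any $J_0\in\calj(\cald)$ with its bundle metric $g_{\cald,0}(X,Y)=d\eta(J_0X,Y)$, integrate $g_{\cald,0}$ over $T$ against Haar measure to get a $T$-invariant fibre metric $g_\cald$, and feed the $T$-invariant pair $(d\eta|_\cald,g_\cald)$ into the standard polar-decomposition construction to obtain $J\in\calj(\cald)$. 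Because both $d\eta|_\cald$ and $g_\cald$ are $T$-invariant, so is $J$ under the action $J\mapsto\phi_*J\phi^{-1}_*$; equivalently one may invoke contractibility of $\calj(\cald)$ from Proposition \ref{Jcontr}(1) together with a fixed-point argument for the compact group $T$. Either way $T\subset\gC\gR(\cald,J)$ by the definition (\ref{CRtrans}).

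The last step is the maximality bookkeeping, where the hypothesis that $T$ is maximal in $\gC\go\gn(M,\cald)$ is used. By Theorem \ref{Schthm} the isotropy group $\gC\gR(\cald,J)$ is a compact Lie group, or $SU(n+1,1)$ in the sphere case; in either situation $T$ is contained in some maximal torus $T'$ of $\gC\gR(\cald,J)$. But $\gC\gR(\cald,J)\subset\gC\go\gn(M,\cald)$, so $T'$ is a torus of $\gC\go\gn(M,\cald)$ containing the maximal torus $T$, whence $T'=T$. Therefore $T$ is itself a maximal torus of $\gC\gR(\cald,J)$, and as all maximal tori of $\gC\gR(\cald,J)$ are conjugate we get $\gQ(J)=\calc_T(\cald)=[T]$. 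Finally $\gr(T)=\dim T$ is exactly the rank of $\gC\gR(\cald,J)$, so $J\in\calj(\cald,\gr)$, which secures the graded assertions.

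I expect the only genuine work to sit in the averaging step: one must verify that the polar decomposition of the $T$-invariant pair $(d\eta|_\cald,g_\cald)$ produces an endomorphism satisfying $J^2=-\BOne$ and the compatibility of Definition \ref{compatibleJ}, and that it is truly fixed by $J\mapsto\phi_*J\phi^{-1}_*$ for $\phi\in T$. This is classical, and it is essentially the content of the cited Lemma A.4 of \cite{Kar99}; the maximality argument of the previous paragraph is then a short formal consequence of $\gC\gR(\cald,J)\subset\gC\go\gn(M,\cald)$ and Theorem \ref{Schthm}. The one point needing only a remark is the formal class $\emptyset$, which is attained precisely when some $J$ has $\dim\gC\gR(\cald,J)=0$ and which plays no role in the rank-$\gr$ statements with $\gr\geq 1$.
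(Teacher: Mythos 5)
Your proof is correct and takes essentially the same route as the paper, whose entire ``proof'' is the citation of the standard averaging result (Lemma A.4 of \cite{Kar99}): pick a $T$-invariant contact form via Lemma \ref{lerlem}, average a compatible bundle metric over $T$ and apply polar decomposition to produce a $T$-invariant $J\in\calj(\cald)$, then use maximality of $T$ in $\gC\go\gn(M,\cald)$ together with Theorem \ref{Schthm} to conclude $\gQ(J)=[T]$ and that $J$ lies in the correct graded piece. Your write-up simply makes explicit the details that the paper delegates to that citation.
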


However, it is easy to see that except for the toric case the maps (\ref{Jmap2}) and (\ref{Jmap4}) typically are not injective. This is studied further in Sections \ref{torrev} and \ref{whs}.

\subsection{The Group $\gA\gu\gt(\cals)$}
Let $\cals=(\xi,\eta,\Phi,g)$ be a contact metric structure. The group $\gA\gu\gt(\cals)$ is the group of automorphisms of $\cals$ defined by
\begin{equation}\label{autgroup}
\gA\gu\gt(\cals)=\gA\gu\gt(\xi,\eta,\Phi,g)= \{\varphi\in \gC\go\gn(M,\cald)~|~ (\varphi^{-1}_*\xi,\varphi^*\eta,\varphi_*\Phi\varphi^{-1}_*,\varphi^*g)=(\xi,\eta,\Phi,g)\}.
\end{equation}
We can abbreviate this condition as $\cals^\varphi=\cals.$ Note that $\gA\gu\gt(\cals)$ is a compact Lie subgroup of $\gC\gR(\cald,J)$. The following result was given in \cite{BGS06} for the case that $\cals$ is of Sasaki type, but, in the light of Theorem \ref{Schthm} the proof given there works equally as well under the weaker assumption that $J$ is partially integrable.

\begin{theorem}\label{autcr}
Let $\cald$ be a contact structure with a compatible almost complex structure $J$ on a compact manifold
$M$. Then there exists a contact metric structure $\cals=(\xi,\eta,\Phi,g)$ with underlying almost 
{\rm CR} structure $(\cald,J)$, whose automorphism group $\gA\gu\gt
(\cals)$ is a maximal compact subgroup of $\gC\gR(\cald,J)$. In fact,
except for the case when $(\cald,J)$ is the standard {\rm CR} structure on the sphere 
$S^{2n+1}$, the automorphism 
group $\gA\gu\gt(\cals)$ of $\cals$ is equal to
$\gC\gR(\cald,J)$. Moreover, if $(\cald,J)$ is of K-contact or Sasaki type, we can take $\cals$ to be K-contact or Sasakian, respectively.
\end{theorem}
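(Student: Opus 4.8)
The plan is to treat the generic and exceptional (sphere) cases separately, in each case producing a $\gC\gR(\cald,J)$-invariant contact metric structure by averaging, and then to upgrade to the K-contact/Sasaki setting by averaging in the Reeb cone instead. By Proposition \ref{CRcon} the pair $(\cald,J)$ is partially integrable and strictly pseudoconvex, so Theorem \ref{Schthm} applies. Suppose first that $(\cald,J)$ is not the standard CR structure on $S^{2n+1}$, so that $G:=\gC\gR(\cald,J)$ is a compact Lie group. Fixing any $\eta_0\in\gC^+(\cald)$ and noting that each $\phi\in G$ preserves the co-orientation, one has $\phi^*\eta_0=f_\phi\eta_0$ with $f_\phi\in C^\infty(M)^+$; the Haar average $\eta=\int_G\phi^*\eta_0\,d\mu(\phi)=\bigl(\int_G f_\phi\,d\mu\bigr)\eta_0$ is again a positive multiple of $\eta_0$, hence lies in $\gC^+(\cald)$ and is $G$-invariant. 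Extending $J$ to $\Phi$ by $\Phi|_\cald=J$, $\Phi\xi=0$ and forming the canonical metric $g$ of (\ref{conmet}), all of $\eta,\xi,\Phi,g$ are $G$-invariant, so $G\subseteq\gA\gu\gt(\cals)$; since $\gA\gu\gt(\cals)\subseteq\gC\gR(\cald,J)=G$ always, equality holds, and being compact $\gA\gu\gt(\cals)=G$ is trivially a maximal compact subgroup of $\gC\gR(\cald,J)$.

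In the exceptional case Theorem \ref{Schthm} gives $\gC\gR(\cald,J)=SU(n+1,1)$, and I would simply take $\cals$ to be the standard Sasakian structure on $S^{2n+1}$ arising from the Hopf fibration. Its automorphism group is $U(n+1)$, which is exactly a maximal compact subgroup of $SU(n+1,1)$, establishing the first assertion in this case.

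The substantive point, and the one I expect to be the main obstacle, is the K-contact/Sasaki refinement: the Reeb field of the form averaged above need not leave $J$ invariant, so it need not be K-contact. The remedy is to average in the Reeb cone. If $(\cald,J)$ is of K-contact type there is a Reeb field $\xi_1\in\calr^+(\cald)$ with $\pounds_{\xi_1}J=0$, that is $\xi_1\in\gc\gr(\cald,J)={\rm Lie}(G)$. I would set $\bar\xi=\int_G{\rm Ad}_\phi\xi_1\,d\mu(\phi)$. By Lemma \ref{R+lem} the cone $\calr^+(\cald)$ is open, convex, and ${\rm Ad}_G$-invariant, so each ${\rm Ad}_\phi\xi_1$ lies in it and, by convexity, so does $\bar\xi$; moreover $\bar\xi$ lies in the vector space ${\rm Lie}(G)$ and is ${\rm Ad}_G$-invariant, hence is a $G$-invariant vector field with $\pounds_{\bar\xi}J=0$ (and central in ${\rm Lie}(G)$). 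The unique $\eta\in\gC^+(\cald)$ with Reeb field $\bar\xi$ is then $G$-invariant: writing $\phi^*\eta=f\eta$ and evaluating on the $G$-fixed field $\bar\xi$ forces $f\equiv 1$. Building $g$ as before, $\cals=(\bar\xi,\eta,\Phi,g)$ is a $G$-invariant K-contact structure, so again $\gA\gu\gt(\cals)=G$, now with $\cals$ K-contact; since $J$ is never altered, if in addition $(\cald,J)$ is integrable then $\cals$ is Sasakian. The whole refinement thus rests on Lemma \ref{R+lem}: it is precisely the convexity and ${\rm Ad}_G$-invariance of the Reeb cone that keep the average inside $\calr^+(\cald)$ and deliver a $G$-invariant K-contact Reeb field, everything else—the invariance of the polarizing form and of the canonical metric—being formal.
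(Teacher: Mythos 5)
Your proposal is correct, and it follows the paper's overall strategy---split into the sphere and non-sphere cases via Theorem \ref{Schthm}, then exploit compactness of $\gC\gR(\cald,J)$ to produce an invariant structure---but it replaces both of the paper's key citations with direct constructions, and it handles the K-contact refinement by a genuinely different mechanism. In the generic case the paper invokes Lemma \ref{lerlem} to get a $\gC\gR(\cald,J)$-invariant contact form and then quotes Proposition 8.1.1 of \cite{BG05} to obtain $\cals$ with $\gA\gu\gt(\cals)=\gC\go\gn(M,\eta)\cap\gC\gR(\cald,J)=\gC\gR(\cald,J)$; your Haar average $\eta=\bigl(\int_G f_\phi\,d\mu\bigr)\eta_0$ is precisely the proof of Lemma \ref{lerlem} specialized to this setting, and feeding it into the canonical metric (\ref{conmet}) replaces the appeal to \cite{BG05}. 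For the sphere you exhibit the standard Sasakian structure and $U(n+1)\subset SU(n+1,1)$ explicitly, where the paper cites \cite{BGS06}; same content. The real divergence is in the last clause of the theorem: the paper takes a Reeb field $\xi_1\in\gc\gr^+(\cald,J)$, observes that the closure of its flow is a torus of Reeb type, and appeals to Lemma \ref{lerlem2}, whereas you average ${\rm Ad}_\phi\xi_1$ over $G$ and use the openness, convexity, and ${\rm Ad}$-invariance of $\calr^+(\cald)$ (Lemma \ref{R+lem}) to land on a central element $\bar\xi$ of $\gc\gr^+(\cald,J)$. Your route buys something concrete: since $\bar\xi$ is ${\rm Ad}_G$-fixed, the contact form it determines is automatically invariant under all of $G$, so the equality $\gA\gu\gt(\cals)=\gC\gR(\cald,J)$ survives the passage to the K-contact (or Sasakian) representative with no further argument. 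The paper's torus argument only makes invariance under the Reeb closure $T$ explicit, leaving the reader to supply invariance under the rest of $\gC\gR(\cald,J)$; your barycentric argument closes that step cleanly, at the modest cost of noting that the average of a compact family in an open convex cone of the finite-dimensional space $\gc\gr(\cald,J)$ stays in the cone.
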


\begin{proof}
By Theorem \ref{Schthm} $\gC\gR(\cald,J)$ is compact except when $(\cald,J)$ is the standard CR structure on the sphere in which case the result holds by \cite{BGS06}. So when $(\cald,J)$ is not the standard CR structure on the sphere, $\gC\gR(\cald,J)$ is compact, and by Lemma \ref{lerlem} we can assume that $\gC\gR(\cald,J)\subset \gC\go\gn(M,\eta)$ for some $\eta.$ But then from Proposition 8.1.1 of \cite{BG05} we see that there is a contact metric structure $\cals=(\xi,\eta,\Phi,g)$ such that $\gA\gu\gt(\cals)=\gC\go\gn(M,\eta)\cap\gC\gR(\cald,J)= \gC\gR(\cald,J)$. To prove the last statement we notice that if $\cals$ is of K-contact type, there is a Reeb vector field of some $\eta\in \gC^+(\cald)$ in $\gc\gr(\cald,J)$, and the closure of this Reeb field is a torus of Reeb type. So the result follows from Lemma \ref{lerlem2}. 
\end{proof}

\subsection{Sasaki Cones and the Sasaki Bouquet}\label{secSascone}
The Sasaki cone was defined in \cite{BGS06} (Definition 6.7)\footnote{In \cite{BGS06} the Sasaki cone was defined for Sasakian structures, but the notion clearly holds also for K-contact structures.}  to be the moduli space of Sasakian structures whose underlying strictly pseudoconvex CR structure is $(\cald,J).$ The construction goes as follows: fix an almost CR structure of K-contact type, $(\cald,J)$, and let $\eta$ be a contact form representing $\cald$ (with a fixed co-orientation). Define $\gc\gr^+(\cald,J)$ to be the subset of the Lie algebra $\gc\gr(\cald,J)$ defined by
\begin{equation}\label{cr+}
\gc\gr^+(\cald,J)=\{\xi'\in \gc\gr(\cald,J)~|~\eta(\xi')>0\}.
\end{equation}
The set $\gc\gr^+(\cald,J)$ is a convex cone in $\gc\gr(\cald,J)$, and it is open when $M$ is compact. Furthermore, $\gc\gr^+(\cald,J)$ is invariant under the adjoint action of $\gC\gR(\cald,J)$, and is bijective to the set of K-contact structures compatible with $(\cald,J),$ namely

\begin{equation}
\calk(\cald,J)=\left\{
\begin{array}{c}
 \cals=(\xi,\eta,\Phi,
g):\; \cals \; {\rm a~ \text{K-contact}~ structure} \\
({\rm ker}\, \eta, \Phi \mid_{{\rm ker}\, \eta})=
(\cald,J)\end{array} \right\}.
\end{equation}
(Recall that a contact metric structure $\cals=(\xi,\eta,\Phi,g)$ with underlying almost CR structure $(\cald,J)$ is K-contact if and only if the Reeb vector field $\xi\in\gc\gr(\cald,J).$) We have

\begin{definition}\label{Sasakicone}
Let $(\cald,J)$ be a strictly pseudoconvex almost CR structure of K-contact type on a compact manifold $M$. The {\bf Sasaki cone} is defined to be the moduli space of K-contact structures with underlying CR structure $(\cald,J)$, that is
$$\grk(\cald,J)=\calk(\cald,J)/\gC\gR(\cald,J)=\gc\gr^+(\cald,J)/\gC\gR(\cald,J).$$
\end{definition}

So the Sasaki cone is an invariant of the underlying strictly pseudoconvex almost CR structure. Although different in nature, it plays a role similar to the K\"ahler cone in K\"ahler and almost K\"ahler geometry. If the transverse almost complex structure $J$ is integrable, then the Sasaki cone $\grk(\cald,J)$ is the moduli space of all Sasakian structures with underlying CR structure $(\cald,J)$. This is the case treated in \cite{BGS06}. It is convenient to give a description of the Sasaki cone in terms of maximal tori. Let $G$ be a maximal compact subgroup of $\gC\gR(\cald,J)$, and fix a maximal torus $T\subset G.$ Let $\calw(\cald,J)$ denote the Weyl group of $G$, $\gt$ the Lie algebra of $T$ and define $\gt^+(\cald,J)=\gt\cap \gc\gr^+(\cald,J).$ By abuse of terminology I often refer to $\gt^+(\cald,J)$ as the Sasaki cone, or perhaps more precisely the {\it unreduced Sasaki cone}. Note that $\gt^+(\cald,J)$ is the dual cone to the interior of the moment cone.

Choosing a representative torus in its conjugacy class, we have the identification 
\begin{equation}\label{Sasakicone2}
\grk(\cald,J)= \gt^+(\cald,J)/\calw(\cald,J).
\end{equation}
When the CR structure is fixed the Reeb vector field uniquely determines the K-contact (Sasakian) structure. However, somewhat more holds.

\begin{lemma}\label{Tequivlem}
Let $(\cald,J)$ and $(\cald,J')$ be CR structures of K-contact type, and suppose that $\gQ(J)=\gQ(J')$. Then $\grk(\cald,J')\approx \grk(\cald,J)$, where the equivalence $\approx$ is induced by conjugation.
\end{lemma}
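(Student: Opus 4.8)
The plan is to unwind the definitions and reduce the assertion $\gQ(J)=\gQ(J')$ to a concrete statement about maximal tori, and then transport the unreduced Sasaki cone along the conjugating diffeomorphism. Recall that by definition $\gQ(J)$ is the conjugacy class in $\gC\go\gn(M,\cald)$ of a maximal torus of $\gC\gR(\cald,J)$, and similarly for $J'$. The hypothesis $\gQ(J)=\gQ(J')$ therefore says that a maximal torus $T\subset\gC\gR(\cald,J)$ and a maximal torus $T'\subset\gC\gR(\cald,J')$ lie in the \emph{same} conjugacy class inside $\gC\go\gn(M,\cald)$; that is, there is a $\phi\in\gC\go\gn(M,\cald)$ with $\mathrm{Ad}_\phi\,T'=T$ (after possibly passing to the orientation-preserving subgroup, which is harmless since by the remarks following Theorem \ref{Schthm} the groups $\gC\gR(\cald,J)$ lie in $\gC\go\gn(M,\cald)^+$).

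Next I would use Theorem \ref{autcr} to pin down where the Sasaki cones actually live. Since both $(\cald,J)$ and $(\cald,J')$ are of K-contact type, neither is (up to CR-equivalence) forced to be the standard sphere in a way that obstructs the argument; in the non-sphere case $\gA\gu\gt(\cals)=\gC\gR(\cald,J)$ is itself compact, so a maximal torus $T$ of $\gC\gR(\cald,J)$ is a genuine maximal torus in a compact Lie group, and the description \eqref{Sasakicone2} identifies $\grk(\cald,J)=\gt^+(\cald,J)/\calw(\cald,J)$ with $\gt^+(\cald,J)=\gt\cap\gc\gr^+(\cald,J)$. The point is that $\gt^+(\cald,J)$ is determined by $T$ together with the open cone condition $\eta(\xi')>0$, which by Lemma \ref{R+lem} is the intrinsic Reeb-positivity condition independent of the chosen representative $\eta$.

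The core step is then to show that conjugation by $\phi$ carries $\gt^{+}(\cald,J')$ to $\gt^{+}(\cald,J)$ and intertwines the two Weyl group actions. Differentiating, $\mathrm{Ad}_\phi$ is a linear isomorphism $\gt'\to\gt$ of the respective Lie algebras. I must check it preserves Reeb-positivity: if $\xi'\in\gt'$ is a Reeb vector field, then by the computation in the proof of Lemma \ref{adReeb}, $\mathrm{Ad}_\phi\xi'$ is the Reeb vector field of $\phi^*\eta$, and by Lemma \ref{R+lem}(2) the cone $\calr^+(\cald)$ is $\gC\go\gn(M,\cald)$-invariant, so positivity is preserved. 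Hence $\mathrm{Ad}_\phi(\gt^+(\cald,J'))=\gt^+(\cald,J)$. Because $\phi$ conjugates $\gC\gR(\cald,J')$-automorphisms to $\gC\gR(\cald,J)$-automorphisms of the torus (since it conjugates the ambient compact groups, or at least their maximal tori together with their normalizers), it descends to a bijection of the quotients by the respective Weyl groups, giving the claimed equivalence $\grk(\cald,J')\approx\grk(\cald,J)$.

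The main obstacle I expect is the bookkeeping around the Weyl groups and the precise sense in which $\phi$ conjugates one normalizer to the other: $\phi$ only conjugates the \emph{tori} $T'\to T$ a priori, not necessarily the full groups $\gC\gR(\cald,J')$ and $\gC\gR(\cald,J)$, so I must argue that the induced action on $\gt$ does normalize $T$ and thus realizes Weyl elements, or else observe that the reduced cone $\grk$ depends only on $\gt^+$ modulo the Weyl action, and that the latter is intrinsic to the torus $T$ as a maximal torus of its own centralizer-type structure. The cleanest route is probably to note that the Weyl group action on $\gt^+(\cald,J)$ coincides with the residual action of $N(T)/T$ computed inside $\gC\go\gn(M,\cald)$, which is manifestly carried by $\mathrm{Ad}_\phi$; verifying this identification carefully is the one genuinely delicate point, while everything else is definition-chasing supported by Lemmas \ref{R+lem} and \ref{adReeb}.
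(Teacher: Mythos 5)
Your proposal follows essentially the same route as the paper's own proof: realize the hypothesis $\gQ(J)=\gQ(J')$ as ${\rm Ad}_\phi T'=T$ for some $\phi\in\gC\go\gn(M,\cald)$, transport the Lie algebra $\gt'$ to $\gt$, check that Reeb positivity is preserved, and conclude via the identification (\ref{Sasakicone2}). On the steps you actually complete you are correct, and in fact more careful than the paper: your justification of positivity via Lemma \ref{adReeb} together with the ${\rm Ad}$-invariance of $\calr^+(\cald)$ from Lemma \ref{R+lem}(2) is exactly the right argument, where the paper only says ``the positivity condition only depends on the conjugacy class.'' One can even sharpen this part of your argument: setting $J''=\phi_*J'\phi_*^{-1}$, the structures $J''$ and $J$ share the torus $T$, and then the unreduced cones literally coincide, $\gt^+(\cald,J'')=\gt^+(\cald,J)=\gt\cap\calr^+(\cald)$, since for any $J$ with $\gt\subset\gc\gr(\cald,J)$ this set is cut out of $\gt$ by the single condition $\eta(\cdot)>0$.

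The one step you leave open --- intertwining the two Weyl group actions --- is therefore the entire content of the lemma beyond definition chasing, and it is precisely the step the paper dispatches with ``one easily sees that \dots\ their Weyl groups $\calw$ and $\calw'$ are also conjugate.'' Your diagnosis of why it is delicate is right: ${\rm Ad}_\phi$ carries $\gC\gR(\cald,J')$ onto $\gC\gR(\cald,J'')$, not onto $\gC\gR(\cald,J)$, so conjugation hands you the Weyl group $\calw''$ of a third CR structure sharing the torus $T$, and one must still show that $\calw''$ and $\calw(\cald,J)$ induce the same orbit partition of $\gt^+$. However, the fix you sketch --- identifying $\calw(\cald,J)$ with the residual action of $N_{\gC\go\gn(M,\cald)}(T)/Z_{\gC\go\gn(M,\cald)}(T)$ on $\gt$ --- only obviously gives an inclusion: every element of $N_G(T)$, $G$ a maximal compact subgroup of $\gC\gR(\cald,J)$, acts through the big normalizer, but an arbitrary contactomorphism normalizing $T$ need not preserve $J$, so the claim that its action on $\gt$ is realized inside $\calw(\cald,J)$ is exactly as unproven as the statement you are trying to establish. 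So, as written, your proposal has the same gap as the paper's proof, the difference being that you flag it honestly rather than assert it; closing it requires an argument that two compact subgroups of $\gC\go\gn(M,\cald)$ arising as CR automorphism groups over the same contact structure and sharing $T$ as a common maximal torus have Weyl groups with the same orbits on $\gt^+$, and neither your proposal nor the paper supplies that argument.
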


\begin{proof}
The Lie algebras $\gt$ and $\gt'$ of the maximal tori $T$ and $T'$ are conjugate under an element of $\gC\go\gn(M,\cald)$, and one easily sees that this implies that their Weyl groups $\calw$ and $\calw'$ are also conjugate. Furthermore, the positivity condition only depends on the conjugacy class. So the result follows by using the identification (\ref{Sasakicone2}).
\end{proof}

It is convenient to give
\begin{definition}\label{Tequivdef}
We say that $J,J'\in \calj(\cald)$ are {\bf $T$-equivalent} if $\gQ(J)=\gQ(J')$.
\end{definition}

Lemma \ref{Tequivlem} allows us to define a certain configuration of K-contact structures associated to a given contact structure of K-contact type.

\begin{definition}\label{Sasbou}
Let $\calj_T(\cald)$ denote the set of all $T$-equivalence classes of almost complex structures in $\calj(\cald)$. We define {\bf the (complete) bouquet of Sasaki cones} $\gB(\cald)$ as 
$$\bigcup_\gra\grk(\cald,J_\gra)$$ 
where the union is taken over one representative $J_\gra$ in each $T$-equivalence class in $\calj_T(\cald)$. If $\cala\subset  \calj_T(\cald)$, we say that 
$$\gB_\cala(\cald)=\bigcup_{\gra\in \cala}\grk(\cald,J_\gra)$$
is {\bf a bouquet of Sasaki cones} and if the cardinality of $\cala$ is $N$ we call it an {\bf $N$-bouquet}  and denote it by $\gB_N(\cald)$. If the $N$-bouquet is connected and all Sasaki cones contain the Reeb field $\xi$, we say that $\gB_N(\cald)$ is an {\bf $N$-bouquet based at $\xi$}. 
\end{definition}

Clearly a $1$-bouquet of Sasaki cones is just a Sasaki cone. As we shall see the Sasaki cones in $\gB(\cald)$ can have varying dimension. Also of interest are the connectivity properties of $\gB(\cald)$ and $\gB_\cala(\cald)$. For example, the $N$-bouquets described in \cite{Boy10b} are connected; however, nothing is known about the connectivity of the complete bouquet. From the construction of the bouquet we have

\begin{lemma}\label{numcones}
The number of cones in a complete bouquet is $\gn_R(\cald)$ while the number of cones of dimension $\gr$ is $\gn_R(\cald,\gr)$.
\end{lemma}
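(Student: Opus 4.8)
The plan is to unwind the definitions and identify the (nonempty) cones of $\gB(\cald)$ with the nonempty fibres of the map $\gQ$ lying over Reeb-type conjugacy classes. By Definition \ref{Tequivdef} the $T$-equivalence classes forming $\calj_T(\cald)$ are precisely the nonempty fibres $\gQ^{-1}(\tau)$, $\tau\in\gS\gC_T(\cald)$, so $\calj_T(\cald)$ is canonically in bijection with the image of $\gQ$. The first step is to note that the cone $\grk(\cald,J_\gra)=\gc\gr^+(\cald,J_\gra)/\gC\gR(\cald,J_\gra)$ attached to a class is nonempty exactly when $(\cald,J_\gra)$ is of K-contact type: since $\calr^+(\cald)=\{X\in\gc\go\gn(M,\cald) : \eta(X)>0\}$ by Lemma \ref{R+lem}, one has $\gc\gr^+(\cald,J)=\gc\gr(\cald,J)\cap\calr^+(\cald)$, which is nonempty iff $\gc\gr(\cald,J)$ contains a positive Reeb field. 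Thus counting cones reduces to counting the K-contact type $T$-equivalence classes.

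The key step is the equivalence: $(\cald,J)$ is of K-contact type if and only if $\gQ(J)\in\gS\gC_{RT}(\cald)$. For the forward direction, a compatible K-contact structure has a Reeb field $\xi$ with $\pounds_\xi J=0$ (Definition \ref{kcontype}), so $\xi\in\gc\gr(\cald,J)$; the closure of its flow is a torus in the compact group $\gC\gR(\cald,J)$, and enlarging it to a maximal torus $T$ keeps $\xi$ in the Lie algebra $\gt$, so $T$---whose conjugacy class is $\gQ(J)$---is of Reeb type. Conversely, if $\gQ(J)$ is represented by a maximal torus of Reeb type, then a Reeb field lies in $\gt\subset\gc\gr(\cald,J)$, i.e. $\pounds_\xi J=0$, so $(\cald,J)$ is of K-contact type by Definition \ref{kcontype}. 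Hence $\gQ^{-1}(\gS\gC_{RT}(\cald))$ is exactly the set of K-contact type structures.

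Combining these, surjectivity of $\gQ$ (Theorem \ref{surjthm}) restricts to a surjection from the K-contact type structures onto $\gS\gC_{RT}(\cald)$ whose fibres are, by definition, the K-contact type $T$-equivalence classes; this produces a bijection between the cones of $\gB(\cald)$ and $\gS\gC_{RT}(\cald)$, so their number is $\gn_R(\cald)$. For the graded statement I would observe that when $\gt^+(\cald,J)=\gt\cap\gc\gr^+(\cald,J)$ is nonempty it is open in $\gt$, so by the identification (\ref{Sasakicone2}) and finiteness of $\calw(\cald,J)$ the cone $\grk(\cald,J)$ has dimension $\dim\gt=\gr$, the rank of $\gC\gR(\cald,J)$, which is exactly the dimension of the torus representing $\gQ(J)$. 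The bijection therefore restricts, for each $\gr$, to one between the dimension-$\gr$ cones and $\gS\gC_{RT}(\cald,\gr)$, giving $\gn_R(\cald,\gr)$.

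The routine content is the translation between fibres of $\gQ$ and $T$-equivalence classes; the step demanding genuine care is the K-contact/Reeb-type equivalence, specifically verifying that $\pounds_\xi J=0$ for a Reeb field is precisely the K-contact condition of Definition \ref{kcontype}, and that a Reeb field in $\gc\gr(\cald,J)$ can be absorbed into a maximal torus of $\gC\gR(\cald,J)$ without changing the conjugacy class $\gQ(J)$. The only point requiring separate attention is the standard CR sphere, where $\gC\gR(\cald,J)=SU(n+1,1)$ is noncompact; but it carries the standard Sasakian structure, so it is of K-contact type with a Reeb-type maximal torus and fits the same bijection.
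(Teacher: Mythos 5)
Your proposal is correct and is essentially the paper's own argument: the paper offers no separate proof, stating the lemma as immediate ``from the construction of the bouquet,'' and your unwinding---fibres of $\gQ$ are the $T$-equivalence classes, the nonempty cones are exactly those with $\gQ(J)\in\gS\gC_{RT}(\cald)$ by the K-contact/Reeb-type equivalence, and surjectivity (Theorem \ref{surjthm}, Corollary \ref{toriReeb}) together with the dimension count from (\ref{Sasakicone2}) give the bijection---is precisely the definitional content that construction presupposes. No genuinely different route is taken, so nothing further is needed.
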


Note that any K-contact structure has a torus of Reeb type in its automorphism group. Since any regular (even quasi-regular) contact structure has a compatible K-contact structure \cite{BG05}, the following theorem generalizes a result of Lerman \cite{Ler02b}. But first I restate a lemma for K-contact structures which was stated and proved for the Sasakian case in \cite{BGS06}. It is easy to see that the proof given there works as well for the more general K-contact case.

\begin{lemma}\label{autS}
Let $(\cald,J)$ be an almost {\rm CR} structure of K-contact type on $M$. For each 
$\cals\in \calk(\cald, J)$, the isotropy subgroup 
$\gA\gu\gt(\cals)\subset \gC\gR(\cald,J)$ at $\cals$ satisfies
$$\bigcap_{\cals \in \calk(\cald, J)} \gA\gu\gt(\cals)= T_k\, ,$$
where $T_k$ is a maximal torus of $\gC\gR(\cald,J)$ and is contained in the isotropy subgroup of every 
$\cals\in  \calk(\cald, J)$.
\end{lemma}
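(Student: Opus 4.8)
The plan is to identify each isotropy group $\gA\gu\gt(\cals)$ with the stabilizer of the Reeb field of $\cals$ inside $G:=\gC\gR(\cald,J)$ (which is compact by Theorem~\ref{Schthm} away from the sphere case), and then to compute the intersection of these stabilizers. First I would fix a maximal torus $T_k\subset G$ with Lie algebra $\gt$ and, using the identification (\ref{Sasakicone2}), represent the K-contact structures $\cals=(\xi,\eta,\Phi,g)\in\calk(\cald,J)$ by their Reeb fields $\xi$ taken in the unreduced Sasaki cone $\gt^+=\gt^+(\cald,J)=\gt\cap\gc\gr^+(\cald,J)$ attached to $T_k$. The crucial observation is that
$$\gA\gu\gt(\cals)=\{\phi\in G~|~\phi_*\xi=\xi\}=Z_G(\xi),$$
the centralizer of $\xi$ under the adjoint action. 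Indeed, if $\phi\in G$ fixes $\xi$ then it preserves the splitting $TM=\cald\oplus L_\xi$, hence the unique $\eta$ with $\ker\eta=\cald$ and $\eta(\xi)=1$, the extension $\Phi$ of $J$ determined by $\Phi\xi=0$, and therefore the canonical metric $g$ of the form (\ref{conmet}); conversely any $\phi\in\gA\gu\gt(\cals)$ preserves $\Phi$, hence $J=\Phi|_\cald$ and $\cald$, so $\phi\in G$, and it fixes $\xi$ by definition.

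With this in hand the two inclusions are short. For $T_k\subseteq\bigcap_\cals\gA\gu\gt(\cals)$: since $T_k$ is abelian and each $\xi\in\gt$, the adjoint action ${\rm Ad}_{T_k}$ fixes $\gt$ pointwise, so $T_k\subseteq Z_G(\xi)$ for every such $\xi$; this already proves the ``in particular'' clause. For the reverse inclusion I would use that $\gt^+$ is a nonempty open subset of $\gt$ (openness of $\gc\gr^+$ on the compact $M$) and hence spans $\gt$ as a vector space. Therefore any $\phi$ lying in every $Z_G(\xi)$, $\xi\in\gt^+$, satisfies ${\rm Ad}_\phi=\BOne$ on all of $\gt$, i.e.\ $\phi$ commutes with $\exp\gt=T_k$, so $\phi\in Z_G(T_k)$. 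Since $T_k$ is a maximal torus of $G$, one has $Z_G(T_k)=T_k$, giving $\bigcap_\cals\gA\gu\gt(\cals)\subseteq T_k$ and hence equality.

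The main obstacle is the first step, namely pinning down $\gA\gu\gt(\cals)=Z_G(\xi)$: one must verify that fixing the pair $(\cald,J)$ together with the single vector field $\xi$ already forces invariance of the entire quadruple $(\xi,\eta,\Phi,g)$, which is precisely where the rigidity of the canonical contact metric (\ref{conmet}) is used. A secondary technical point is the identity $Z_G(T_k)=T_k$, which is the standard fact that a maximal torus is its own centralizer in a \emph{connected} compact group; if one works with a merely maximal compact subgroup that is disconnected, one should pass to the identity component (or note that the relevant automorphism groups here are connected), since in the disconnected case $Z_G(T_k)$ could differ from $T_k$ by a finite extension.
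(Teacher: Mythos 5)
The paper itself offers no proof of Lemma \ref{autS}: it is restated from \cite{BGS06} with the remark that the proof given there carries over to the K-contact case, so your argument can only be measured against the intended content. Your first step is correct and well argued: for any $\cals\in\calk(\cald,J)$ one has $\gA\gu\gt(\cals)=\{\phi\in G~|~{\rm Ad}_\phi\xi_\cals=\xi_\cals\}=Z_G(\xi_\cals)$, where $G=\gC\gR(\cald,J)$ is compact by Theorem \ref{Schthm} away from the sphere case. This is exactly the right reduction to Lie theory, and it is not the weak point you feared it was.

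The genuine gap is the sentence in which you ``represent the K-contact structures $\cals\in\calk(\cald,J)$ by their Reeb fields taken in the unreduced Sasaki cone $\gt^+(\cald,J)$.'' By the paper's own definitions, $\calk(\cald,J)$ is in bijection with the \emph{full} cone $\gc\gr^+(\cald,J)$, which is open in $\gc\gr(\cald,J)$ and ${\rm Ad}_G$-invariant; the identification (\ref{Sasakicone2}) describes only the quotient $\grk(\cald,J)=\calk(\cald,J)/\gC\gR(\cald,J)$, i.e.\ it says every $G$-orbit \emph{meets} $\gt^+(\cald,J)$, not that every Reeb field lies in $\gt$. Hence your forward inclusion $T_k\subseteq\gA\gu\gt(\cals)$ is established only for the subfamily with $\xi_\cals\in\gt^+(\cald,J)$. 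For the full family that inclusion actually fails whenever $\gc\gr(\cald,J)$ is nonabelian (as happens, e.g., for the homogeneous Sasaki-Einstein structure on $S^2\times S^3$): if $\xi\in\gt^+(\cald,J)$ is a regular element and $h\in G$ does not normalize $T_k$, then the structure with Reeb field ${\rm Ad}_h\xi$ lies in $\calk(\cald,J)$, yet its isotropy group $Z_G({\rm Ad}_h\xi)$ has identity component $hT_kh^{-1}\neq T_k$; moreover, since $\gc\gr^+(\cald,J)$ is open, the intersection over all of $\calk(\cald,J)$ equals $\ker\bigl({\rm Ad}:G\to GL(\gc\gr(\cald,J))\bigr)$, whose identity component is the center of $G_0$, not a maximal torus. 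So what your argument proves is the statement with the intersection taken over structures whose Reeb fields lie in $\gt^+(\cald,J)$ --- which is the version the paper actually invokes in the proof of Theorem \ref{ReebKcon}, and evidently the intended reading of \cite{BGS06} --- but the reduction as you wrote it is invalid, and in the nonabelian case the literal statement cannot be proved at all. Your closing caveat is also a real issue rather than a formality: even for the slice version one only gets $\bigcap_{\xi\in\gt^+(\cald,J)}Z_G(\xi)=Z_G(T_k)$, and $Z_G(T_k)=T_k$ requires connectedness; since $\gC\gR(\cald,J)$ may be disconnected, the safe conclusion is that $T_k$ is the identity component (equivalently the unique maximal torus) of the intersection, and your parenthetical hope that ``the relevant automorphism groups here are connected'' is unjustified.
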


The following is a reformulation and extension of Proposition \ref{reebkcon}.

\begin{theorem}\label{ReebKcon}
If $\gC\go\gn(M,\cald)$ contains a torus $T$ of Reeb type, then  there is an almost complex structure $J$ on $\cald$ and a quasi-regular K-contact structure $\cals=(\xi,\eta,\Phi,g)$ with $\eta$ in $\cald$, $\Phi|_\cald=J$, and $T\subset \gA\gu\gt(\cals).$
Moreover, $T$ is contained in a maximal torus $T_m$ of Reeb type lying in $\gC\go\gn(M,\cald)$, and there is a quasi-regular K-contact structure $\cals'=(\xi',\eta',\Phi',g')$ such that $\ker\eta'=\cald$ and  $T_m\subset \gA\gu\gt(\cals')$. 
\end{theorem}

\begin{proof}
If $\cals$ is quasi-regular, the first statement follows from Proposition \ref{reebkcon}. So assume that $\eta$ is not quasi-regular. Since $(M,\eta)$ is of Reeb type, the Reeb field lies in the Lie algebra $\gt$ of $T,$ and the closure of its leaves is a subtorus $T'$ of $T$ of dimension greater than one. Then the Sasaki cone has dimension larger than one, and the K-contact structure belongs to an underlying almost CR structure $(\cald,J).$ So by the Approximation Theorem \cite{Ruk95a} (see also Theorem 7.1.10 of \cite{BG05}) there is a sequence of quasi-regular K-contact structures $\cals_j$ with underlying almost CR structure $(\cald,J)$ that converges to $\cals.$  Moreover, $T$ is contained in a maximal torus $T_m$ of $\gA\gu\gt(\cals)$ which by Lemma \ref{autS} is the maximal torus for all K-contact structures in the underlying almost CR structure $(\cald,J).$ Since the Reeb vector fields $\xi_j$ of $\cals_j$ all lie in the Sasaki cone associated to $(\cald, J)$, we can take any of the $\cals_j$ as our quasi-regular K-contact structure. Since these are all invariant under the action of $T$, it is a subgroup of $\gA\gu\gt(\cals).$ 

Now $T$ is contained in some maximal torus $T_m$ in $\gC\go\gn(M,\eta)$, and since $T_m$ contains the Reeb vector field it is of Reeb type. Thus, applying the argument above to $T_m$ gives a quasi-regular K-contact structure $\cals'=(\xi',\eta',\Phi',g')$ with underlying contact structure $\cald$ and satisfying $T_m\subset \gA\gu\gt(\cals')$.
\end{proof}

\begin{remark}\label{nonurem}
Example \ref{counter} shows that the conjugacy class of the maximal torus is not unique.
\end{remark}

Theorem \ref{ReebKcon} has an immediate
\begin{corollary}\label{toriReeb}
If $\cald$ is a contact structure of K-contact type, then 
the image of the map $\gQ:\calj(\cald)\ra{1.6} \gS\gC_{T}^{max}(\cald)$ is non-empty and contains $\gS\gC_{RT}^{max}(\cald)$.
\end{corollary}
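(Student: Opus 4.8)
The plan is to prove the containment $\gS\gC_{RT}(\cald)\subset \text{im}(\gQ)$ directly, and then to read off the non-emptiness of the image from the fact that $\gS\gC_{RT}(\cald)$ is itself non-empty. First I would record that $\gS\gC_{RT}(\cald)\neq\emptyset$: since $\cald$ is of K-contact type, Proposition \ref{reebkcon} guarantees that $\gC\go\gn(M,\cald)$ contains a torus $T_0$ of Reeb type, and Theorem \ref{ReebKcon} shows that $T_0$ is contained in a maximal torus $T_m$ of Reeb type which is maximal in $\gC\go\gn(M,\cald)$. The conjugacy class of $T_m$ is then an element of $\gS\gC_{RT}(\cald)$, so that set is non-empty.

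For the containment, let $[T]\in \gS\gC_{RT}(\cald)$ be the conjugacy class of a maximal torus $T$ of Reeb type; the goal is to exhibit a $J\in\calj(\cald)$ with $\gQ(J)=[T]$. Since $T$ is of Reeb type, I would apply Theorem \ref{ReebKcon} to $T$ to obtain a (quasi-regular) K-contact structure $\cals=(\xi,\eta,\Phi,g)$ with $T\subset\gA\gu\gt(\cals)$; because $T$ is already maximal in $\gC\go\gn(M,\cald)$, the maximal torus $T_m\supset T$ furnished by that theorem must coincide with $T$. Setting $J=\Phi|_{\cald}\in\calj(\cald)$ (a compatible almost complex structure, and of K-contact type by construction), the inclusions $T\subset\gA\gu\gt(\cals)\subset\gC\gR(\cald,J)$ hold. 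Now $\gC\gR(\cald,J)\subset\gC\go\gn(M,\cald)$, and any torus $T'$ with $T\subset T'\subset\gC\gR(\cald,J)$ is in particular a torus in $\gC\go\gn(M,\cald)$ containing $T$, hence equals $T$ by maximality of $T$ in the ambient group; so $T$ is a maximal torus of $\gC\gR(\cald,J)$ as well. Since $\gC\gR(\cald,J)$ is a compact Lie group (Theorem \ref{Schthm}, together with the $SU(n+1,1)$ description in the exceptional sphere case), its maximal torus is unique up to conjugacy, and therefore $\gQ(J)=[T]$ by the very definition of $\gQ$. Combined with the first step, this shows the image of $\gQ$ is non-empty and contains $\gS\gC_{RT}(\cald)$.

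The step requiring the most care is the passage that identifies $\gQ(J)$ with the given class $[T]$: one must check both that $T$ survives as a \emph{maximal} torus after restricting from $\gC\go\gn(M,\cald)$ to the smaller group $\gC\gR(\cald,J)$, and that the class returned by $\gQ$ in $\gC\go\gn(M,\cald)$ is exactly $[T]$ rather than the class of some strictly larger torus. Both points hinge on the two facts that $T$ is maximal in $\gC\go\gn(M,\cald)$ and that $\gC\gR(\cald,J)\subset\gC\go\gn(M,\cald)$; it is precisely the hypothesis that $[T]$ lies in $\gS\gC_{RT}(\cald)$ (so that $T$ is maximal and of Reeb type, permitting the application of Theorem \ref{ReebKcon}) that makes the argument go through. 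Everything else is routine, and the non-emptiness assertion is then immediate.
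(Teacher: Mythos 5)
Your proof is correct and follows essentially the same route as the paper, which presents the corollary as an immediate consequence of Theorem \ref{ReebKcon}: you have simply made explicit the steps the paper leaves implicit (maximality of $T$ in $\gC\go\gn(M,\cald)$ forcing $T=T_m$, the survival of maximality under the inclusions $T\subset\gA\gu\gt(\cals)\subset\gC\gR(\cald,J)\subset\gC\go\gn(M,\cald)$, and the uniqueness up to conjugacy of maximal tori in $\gC\gR(\cald,J)$ via Theorem \ref{Schthm}), together with Proposition \ref{reebkcon} for non-emptiness.
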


\begin{remark}
As far as contact invariants are concerned it is worth mentioning the recent result in \cite{NiPa07} which says that any contact structure $\cald$ with $\gn_R(\cald)\geq 1$ must be symplectically fillable.
\end{remark}

\section{The Symplectomorphism Group of a Symplectic Orbifold}\label{symgroup}

\subsection{A Brief Review of Orbifolds}
Let us recall the definition of a diffeomorphism of orbifolds. Here I follow \cite{BG05} which in turn relies heavily on \cite{MoPr97}, and refer to these references for the precise definition of an orbifold. Suffice it to say that an {\it orbifold} $\calx$ is a paracompact Hausdorff space $X$ together with an atlas of local uniformizing charts $(\tU_i,\grG_i,\varphi_i)$ where $\grG_i$ is a finite group acting effectively on the local uniformizing neighborhood $\tU_i\approx \bbr^n$ and $\varphi_i:\tU_i\ra{1.4} \varphi(\tU_i)$ induces a homeomorphism between $\tU_i/\grG_i$ and $U_i=\varphi_i(\tU_i).$ Moreover, these charts patch together in an appropriate way, that is, given two such charts $(\tU_i,\grG_i,\varphi_i)$ and  $(\tU_j,\grG_j,\varphi_j)$ there are smooth embeddings $\grl_{ij}:\tU_i\ra{1.4} \tU_j$, called {\it injections} that satisfy $\varphi_j\circ \grl_{ij} =\varphi_j.$

\begin{definition}\label{orbdiff}
Let $\calx=(X,\calu)$ and $\caly=(Y,\calv)$ be orbifolds. A map
$f:X\ra{1.3} Y$ is said to be {\bf smooth} if for every point
$x\in X$ there are orbifold charts $(\tU_i,\grG_i,\varphi_i)$ about $x\in U_i=\varphi(\tU_i)$
and $(\tV_j,\grF_j,\psi_j)$ about $f(x)\in V_j=\psi(\tV_j)$ such that $f_{ij}=f|_{U_i}$ maps
$U_i$ into $V_j$ and there exist local lifts
$\tf_{ij}:\tU_i\ra{1.3} \tV_j$ satisfying $\psi_j\circ \tf_{ij}
=f_{ij}\circ \varphi_i.$ An {\bf orbifold diffeomorphism} between $\calx=(X,\calu)$ and $\caly=(Y,\calv)$ consists of smooth maps $f:X\ra{1.3} Y$ and $g:Y\ra{1.3} X$ such that 
$f\circ g=\BOne_Y$ and $g\circ f=\BOne_X,$ where $\BOne_X,\BOne_Y$ are the
respective identity maps. 
\end{definition}

One problem with this definition is that there is an ambiguity concerning the choice of lifts, and correspondingly the choice of homomorphism $(\tf_{\tU})_*:\grG\ra{1.3} \Phi.$ Considering different choices of lifts as giving different orbifold maps gives rise to what is referred to in \cite{BoBr02,BoBr06} as {\it unreduced orbifold maps}. Whereas, if one considers just the maps $f$ as long as smooth lifts exists, one obtains the set of {\it reduced orbifold maps}. The latter is the usual interpretation of orbifold maps, and this is the convention that I follow. 

\begin{example}\label{diffex1} 
{\rm An orbifold diffeomorphism is generally a stronger notion than a diffeomorphism of manifolds. For example, consider the weighted projective lines $\bbc\bbp(p,q)=S^2(p,q)$ where $p$ and $q$ are relatively prime integers satisfying $p<q.$ Each has a natural complex orbifold structure. But for any such $p$ and $q$, $\bbc\bbp(p,q)$ is isomorphic as smooth projective algebraic varieties to the projective line $\bbc\bbp^1$, so they are all diffeomorphic as smooth manifolds to the 2-sphere $S^2$. However, they are {\it not} diffeomorphic as real orbifolds.}
\end{example} 

Let us consider the case $\caly=\calx$. Then we define
\begin{equation}\label{orbdiffgp}
\gD\gi\gf\gf(\calx)=\{f:\calx\ra{1.5} \calx ~|~f~{\rm  is~an~orbifold~diffeomorphism}\}
\end{equation}
and the `unreduced' group $\gD\gi\gf\gf_u(\calx)$ is defined in the same way, except now one considers different local lifts as giving different maps $f.$ It is easy to see that both $\gD\gi\gf\gf(\calx)$ and $\gD\gi\gf\gf_u(\calx)$ form a group under composition of maps.  So when $\calx$ is compact there is a finite normal subgroup $\gG$ of $\gD\gi\gf\gf_u(\calx)$ consisting of the set of all lifts of the identity map. An original choice of lifts in each uniformizing neighborhood defines the identity in $\gG$ and $\gD\gi\gf\gf_u(\calx)$ so that $\gG|_{\tU_i}=\grG_i$. So we have an exact sequence of groups \cite{BoBr06}
\begin{equation}\label{redunred}
\BOne\ra{1.8}\gG\ra{1.8} \gD\gi\gf\gf_u(\calx)\ra{1.8} \gD\gi\gf\gf(\calx)\ra{1.8} \BOne.
\end{equation}
Moreover, again when $\calx$ is compact, $\gD\gi\gf\gf_u(\calx)$ has the structure of a Fr\'echet manifold \cite{BoBr02,BoBr06} modelled on the space of smooth sections of the tangent orbibundle. Recall that orbifolds have a natural stratification determined by their local isotropy groups, and $\gD\gi\gf\gf(\calx)$ preserves the strata.

\begin{example}\label{diffex2}
Continuing with Example \ref{diffex1} take $\calx=S^2(p,q)$ and consider the unreduced diffeomorphism group $\gD\gi\gf\gf_u(S^2(p,q))$. For each of the $pq$ choices of lifts, we have a diffeomorphism of $S^2$ which fixes the two singular points, say the north and south pole. So $\gD\gi\gf\gf(S^2(p,q))$ is the quotient group $\gD\gi\gf\gf_u(S^2(p,q))/\bbz_{pq}$ which can be identified with a subgroup of $\gD\gi\gf\gf(S^2)$ that fixes these two points. This is the reduced orbifold group in the terminology of Borzellino and Brunsden, cf. Corollary 2 of \cite{BoBr06}.
\end{example}

\subsection{Symplectic Orbifolds}
Symplectic orbifolds seem to have first been considered by Weinstein \cite{Wei77b}.

\begin{definition}
A {\bf symplectic orbifold} $(\calx,\gro)$ is an orbifold $\calx$ together with a symplectic form $\gro_i$ on each local uniformizing neighborhood $\tU_i$ that is invariant under $\grG_i$ and satisfies $\grl_{ji}^*(\gro_j|_{\grl_{ji}(U_i)})=\gro_i.$ 
\end{definition}

Let $(\calx,\gro)$ be a symplectic orbifold, then we define the {\it group of orbifold symplectomorphisms} by
\begin{equation}\label{sympgrp}
\gS\gy\gm(\calx,\gro)=\{f\in \gD\gi\gf\gf(\calx) ~|~f^*\gro=\gro\}.
\end{equation}
This just means that if $\tf_{ij}:\tU_i\ra{1.5} \tU_j$ denotes the local lift of $f$ to $\tU_i,$ we have $\tf_{ij}^*\gro_j=\gro_i.$ 
Since working on orbifolds means working equivariantly with sheaves or bundles on the local uniformizing covers, and invariantly with their sections, most of the standard results for symplectic manifolds also hold for symplectic orbifolds, so we shall only give details when differences occur. Of course, we also have the unreduced group $\gS\gy\gm_u(\calx,\gro)$ where lifts of the identity give rise to different maps. In this case one works with smooth sections $\gro_i$, not necessarily invariant ones. We need to assure ourselves that Weinstein's local charts (cf. \cite{McDSa}, pg 103) for the symplectomorphism group works equally well for orbifolds. Recall that his proof involves identifying a neighborhood of the zero section in $T^*M$ with a neighborhood of the diagonal in $M\times M$ regarding $M$ as a Lagrangian submanifold of $M\times M$ with symplectic form $(-\gro)\oplus \gro.$ This follows in turn by identifying the normal bundle of $M$ in $M\times M$ with the tangent bundle $TM.$ But the construction of normal bundles and the identification of a neighborhood of a suborbifold with a neighborhood of the zero section of its normal bundle are natural, and as discussed in \cite{LeTo97} works equally well for orbifolds.

Summarizing we have
\begin{proposition}\label{Weincharts}
Let $(\calb,\gro)$ be a compact symplectic orbifold. Then there is a diffeomorphism between a neighborhood of the identity in $\gS\gy\gm(\calx,\gro)$ and a neighborhood of $0$ in the vector space of closed $\gG$-invariant sections of the orbibundle $T^*\calb$. 
\end{proposition}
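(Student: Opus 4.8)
The plan is to transport Weinstein's classical construction of local charts for the symplectomorphism group verbatim into the orbifold category, using the fact that all the geometric ingredients (normal bundles, tubular neighborhoods, the Lagrangian neighborhood theorem) have been established equivariantly and are natural under the injections $\grl_{ij}$. As in Proposition \ref{etasub}, the strategy is to identify a symplectomorphism with its graph. Concretely, given $f\in\gS\gy\gm(\calx,\gro)$ close to the identity, its graph $\grG_f$ sits inside the product orbifold $\calb\times\calb$ equipped with the symplectic form $(-\gro)\oplus\gro$, and $\grG_f$ is a Lagrangian suborbifold precisely because $f^*\gro=\gro$. The diagonal $\grD\subset\calb\times\calb$ is itself a Lagrangian suborbifold whose normal orbibundle is naturally identified with $T^*\calb$ (using the symplectic form to convert the tangent orbibundle into the cotangent one).

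First I would invoke the orbifold tubular neighborhood and Lagrangian neighborhood theorems, whose validity in the orbifold setting is exactly what the discussion preceding the statement asserts (following \cite{LeTo97} for the naturality of normal bundles and \cite{McDSa} for the Weinstein argument): there is a symplectomorphism $\Psi$ between a neighborhood of the zero section of $T^*\calb$ and a neighborhood of $\grD$ in $(\calb\times\calb,(-\gro)\oplus\gro)$, carrying the zero section to $\grD$. Second, for $f$ sufficiently $C^1$-close to the identity the graph $\grG_f$ lies in this neighborhood and, being Lagrangian, corresponds under $\Psi^{-1}$ to the graph of a closed $1$-form $\sigma_f$ on $\calb$, i.e. a section of $T^*\calb$ whose image is Lagrangian iff $d\sigma_f=0$. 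Third, the assignment $f\mapsto\sigma_f$ is the desired chart: it is a diffeomorphism onto a neighborhood of $0$ in the Fr\'echet space of closed $\gG$-invariant sections of $T^*\calb$, its inverse being the operation of reading off the diffeomorphism whose graph is $\Psi(\mathrm{graph}\,\sigma)$.

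The orbifold-specific point, and the step I expect to require the most care, is the bookkeeping of the local uniformizing data: one must check that the graph construction and the identification $\Psi$ are compatible with the injections $\grl_{ij}$ and the finite groups $\grG_i$, so that the resulting section $\sigma_f$ is genuinely $\gG$-invariant and well defined as a section of the cotangent orbibundle rather than merely a collection of local pieces. Because $f\in\gS\gy\gm(\calx,\gro)$ is an orbifold symplectomorphism, its local lifts $\tf_{ij}$ satisfy $\tf_{ij}^*\gro_j=\gro_i$ and are compatible with the $\grG_i$-actions; since $\Psi$ arises from natural constructions (normal bundle, exponential-type map for an invariant metric), it intertwines these actions, so the closedness and invariance of $\sigma_f$ follow. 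The remaining smoothness and bijectivity assertions are then formally identical to the manifold case, working $\grG_i$-equivariantly on each uniformizing chart $\tU_i$ and gluing via the $\grl_{ij}$, and hence need no separate argument beyond noting this equivariant transport.

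Finally, I would remark that the hypothesis of compactness of $\calb$ is used exactly as in the manifold case, to guarantee that the tubular neighborhood has uniform size and that ``$C^1$-close to the identity'' is an honest open condition in the compact-open $C^\infty$ topology; this is what lets one assert that the chart covers a full neighborhood of the identity in $\gS\gy\gm(\calx,\gro)$. The analogue for the unreduced group $\gS\gy\gm_u(\calx,\gro)$ follows by the same reasoning applied to not-necessarily-invariant local sections $\gro_i$, consistent with the exact sequence (\ref{redunred}).
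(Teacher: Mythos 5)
Your proposal is correct and follows essentially the same route as the paper: both transport Weinstein's graph construction (Lagrangian neighborhood of the diagonal in $\calb\times\calb$ with form $(-\gro)\oplus\gro$, identified with a neighborhood of the zero section of $T^*\calb$) into the orbifold category using the naturality of normal bundles, and both isolate the same orbifold-specific caveat, namely that the chart for the reduced group $\gS\gy\gm(\calx,\gro)$ must land in the $\gG$-invariant closed sections while the unreduced group $\gS\gy\gm_u(\calx,\gro)$ is modelled on all closed sections. Your write-up merely spells out the equivariance bookkeeping in more detail than the paper, which states the invariance point directly via the exact sequence (\ref{redunred}) and the finiteness of $\gG$ on a compact orbifold.
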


\begin{proof}
Given the discussion above, the proof in \cite{McDSa} goes through for orbifolds with one caveat. We need to be mindful of the fact that it is the unreduced group that is modelled on the vector space $Z^1(\calb)$ of closed 1-forms of the orbibundle $T^*\calb.$ The group $\gG$ in the exact sequence (\ref{redunred}) is the direct product of the local uniformizing groups $\grG$ on each uniformizing neighborhood. Since $\calb$ is compact $\gG$ is finite, and on the vector space level we conclude that the reduced group $\gS\gy\gm(\calx,\gro)$ is modelled on the vector space $Z^1(\calb)^\gG$ of $\gG$-invariant sections.
\end{proof}

Then as in the manifold case we have

\begin{corollary}\label{orbWeinstein}
Let $(\calb,\gro)$ be a compact symplectic orbifold. Then the group $\gS\gy\gm(\calb,\gro)$ is locally path-connected.
\end{corollary}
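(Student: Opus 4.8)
The plan is to deduce Corollary \ref{orbWeinstein} directly from the local chart description in Proposition \ref{Weincharts}, following the standard manifold argument. The key observation is that local path-connectedness at the identity is essentially immediate from the chart, and translation by group elements then propagates this to every point.

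First I would invoke Proposition \ref{Weincharts} to fix a diffeomorphism $\Theta$ between a neighborhood $U$ of the identity in $\gS\gy\gm(\calb,\gro)$ and a neighborhood $W$ of $0$ in the Fr\'echet vector space $Z^1(\calb)^\gG$ of closed $\gG$-invariant 1-forms. Shrinking $W$ if necessary, I may assume $W$ is star-shaped (indeed convex) about $0$, since any neighborhood of $0$ in a topological vector space contains a convex, even balanced, neighborhood. For any $f\in U$, the straight-line path $t\mapsto t\,\Theta(f)$ lies in $W$ for all $t\in[0,1]$, and transporting this back via $\Theta^{-1}$ gives a continuous path in $U$ from the identity $\Theta^{-1}(0)=\mathrm{id}$ to $f$. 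Hence $U$ is path-connected, and since $U$ is an arbitrary small neighborhood of the identity, the group $\gS\gy\gm(\calb,\gro)$ is locally path-connected at $\mathrm{id}$.

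Next I would remove the restriction to the identity. The group operations in a Fr\'echet Lie group are continuous, so for any $g\in \gS\gy\gm(\calb,\gro)$ left translation $L_g:h\mapsto g\circ h$ is a homeomorphism of the group onto itself carrying $\mathrm{id}$ to $g$. Thus $L_g(U)$ is a path-connected neighborhood of $g$, and the path-connected neighborhoods $L_g(U)$ obtained as $U$ ranges over a neighborhood basis at $\mathrm{id}$ form a neighborhood basis at $g$. Therefore $\gS\gy\gm(\calb,\gro)$ is locally path-connected at every point, which is the assertion.

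The only genuine subtlety, and the step I would watch most carefully, is the passage from the \emph{unreduced} to the \emph{reduced} group: Proposition \ref{Weincharts} models $\gS\gy\gm(\calb,\gro)$ itself on $Z^1(\calb)^\gG$ (the $\gG$-invariant sections), having already accounted for the finite group $\gG$ in the exact sequence (\ref{redunred}). So no further quotient is needed here, and the convexity argument applies verbatim on the reduced level. I expect this to be the main obstacle only in the bookkeeping sense — one must confirm that the chart $\Theta$ is a genuine homeomorphism onto its image in the subspace topology so that $\Theta^{-1}$ of a path is again a path in the group, but this is precisely what Proposition \ref{Weincharts} provides. Everything else is the routine translation of the classical symplectomorphism argument of \cite{McDSa} into the orbifold setting.
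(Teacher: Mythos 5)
Your proposal is correct and follows exactly the route the paper intends: Corollary \ref{orbWeinstein} is stated as an immediate consequence of Proposition \ref{Weincharts}, ``as in the manifold case'' of \cite{McDSa}, which is precisely your argument of pulling back convex (star-shaped) neighborhoods of $0$ in $Z^1(\calb)^\gG$ through the Weinstein chart and then translating to arbitrary group elements. Your closing remark on the reduced versus unreduced group is also the right point of care, and it is indeed already absorbed into the statement of Proposition \ref{Weincharts}, so nothing further is needed.
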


Let $\gS\gy\gm_0(\calb,\gro)$ denote the subgroup of $\gS\gy\gm(\calb,\gro)$ connected to the identity.
It follows from Corollary \ref{orbWeinstein} that for each $f\in \gS\gy\gm_0(\calb,\gro)$ there is a smooth family $\{f_t\}\subset \gS\gy\gm_0(\calb,\gro)$ satisfying $f_0=\BOne$ and $f_1=f.$ Moreover, there are smooth vector fields $X_t$ (smooth invariant sections of the orbibundle $T\calb$) such  that 
\begin{equation}\label{symisotopy}
 \frac{d}{dt}f_t=X_t\circ f_t.
\end{equation}
Since the $f_t$ are symplectomorphisms the vector fields $X_t$ are symplectic, that is, satisfy $d(X_t\hook \gro)=0.$ Equation (\ref{symisotopy}) also holds in the case of smooth (not necessarily invariant) sections of $T\calb$ in which case $f_t$ are elements of the unreduced group.

\subsection{Hamiltonian Isotopies}
If the 1-forms $X_t\hook \gro$ are exact for all $t$ we have smooth functions $H_t$, called {\it Hamiltonians}, that satisfy $X_t\hook \gro=dH_t.$ The family $\{f_t\}$ is then called a {\it Hamiltonian isotopy}. A symplectomorphism $f\in \gS\gy\gm(\calb,\gro)$ is called {\it Hamiltonian} if there is a Hamiltonian isotopy $f_t$ such that $f_0=\BOne$ and $f_1=f.$ We denote by $\gH\ga\gm(\calb,\gro)$ the subset of all Hamiltonian symplectomorphisms. Moreover, the set of vector fields $X_H$ satisfying $X_H\hook \gro=dH$ forms a Lie subalgebra $\gh\ga\gm(\calb,\gro)$ of $\gs\gy\gm(\calb,\gro),$ called the {\it Lie algebra of Hamiltonian vector fields}. It is easy to see that  $\gH\ga\gm(\calb,\gro)$ is a normal subgroup of $\gS\gy\gm_0(\calb,\gro)$. In the manifold case there is a well-known exact sequence 
\begin{equation}\label{fluxdef}
0\ra{1.8} \gH\ga\gm(B,\gro)\ra{1.8} \gS\gy\gm_0(B,\gro)\fract{\gF\gl}{\ra{1.8}} H^1(B,\bbr)/\grG_\gro \ra{1.8} 0
\end{equation}
for some discrete countable subgroup $\grG,$ and it is easy to see that this also holds for compact symplectic orbifolds. The construction of $\gF\gl$ passes to the universal covering groups and uses the fact that such groups are realized by homotopy classes of smooth paths with fixed endpoints (see \cite{Ban97,McDSa} for details). The discreteness of $\grG_\gro$ is one formulation of the so-called Flux Conjecture  which has been recently proved by Ono \cite{Ono06}. It seems plausible that this also holds in the orbifold case, but we do not need it, since it is known to hold when the cohomology class $[\gro]$ is rational. So we need only verify it for compact symplectic orbifolds with a rational cohomology class. The importance of the Flux Conjecture is that it is equivalent to the group $\gH\ga\gm(\calb,\gro)$ being $C^1$-closed in $\gS\gy\gm_0(\calb,\gro)$.

\begin{proposition}\label{flux}
Let $(\calb,\gro)$ be a compact symplectic orbifold with $[\gro]\in H^2(B,\bbq)$. Then the group $\grG_\gro$ is discrete, and $\gH\ga\gm(\calb,\gro)$ is $C^1$-closed in $\gS\gy\gm_0(\calb,\gro)$ and a Fr\'echet Lie subgroup whose Lie algebra is $\gh\ga\gm(\calb,\gro)$.
\end{proposition}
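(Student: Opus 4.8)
The plan is to prove the three assertions of Proposition~\ref{flux} in the order in which they logically depend on one another: first the discreteness of $\grG_\gro$, then the $C^1$-closedness of $\gH\ga\gm(\calb,\gro)$, and finally the Fr\'echet Lie subgroup structure together with the identification of its Lie algebra. The key observation throughout is that, by Proposition~\ref{Weincharts} and the surrounding discussion, all of the standard manifold-level machinery (the flux homomorphism $\gF\gl$, Weinstein's local charts, and the exact sequence (\ref{fluxdef})) transfers to the orbifold setting once one works $\gG$-invariantly on the local uniformizing covers; so the only genuinely new input is the rationality hypothesis $[\gro]\in H^2(B,\bbq)$.

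First I would establish discreteness of $\grG_\gro$. Recall that $\grG_\gro$ is the image under the flux homomorphism of $\pi_1(\gS\gy\gm_0(\calb,\gro))$, equivalently the flux of loops of symplectomorphisms, and lives in $H^1(B,\bbr)$. I would use the standard argument (as in \cite{Ban97,McDSa}) that the flux group is contained in the set of periods obtained by pairing $[\gro]$ against integral $2$-cycles; concretely, for a loop $\{f_t\}$ the flux is computed by integrating $\gro$ over the cylinders swept out by $1$-cycles, so $\grG_\gro$ lies in the image of the homomorphism $H_2(B,\bbz)\ra{1.3}\bbr$ given by pairing with $[\gro]$ (up to the orbifold subtleties in the cycle groups, where one pairs against $H_2^{orb}$). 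Since $[\gro]$ is rational, this image is a subgroup of a lattice $\frac{1}{N}\bbz$ for suitable $N$, hence discrete. This is the step where rationality is essential and where the orbifold cycle/period computation must be checked, but Remark~\ref{ratclass} guarantees exactly the integrality needed to run the period argument on the orbifold classifying space.

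With $\grG_\gro$ discrete, the quotient $H^1(B,\bbr)/\grG_\gro$ is a (possibly non-Hausdorff-free) topological group in which $\{0\}$ is closed, so the flux $\gF\gl$ in the exact sequence (\ref{fluxdef}) has the property that $\gF\gl^{-1}(0)=\gH\ga\gm(\calb,\gro)$ is the preimage of a closed point; combined with the continuity of $\gF\gl$ in the $C^1$ topology—which follows because $\gF\gl$ is defined by integration of the $1$-forms $X_t\hook\gro$ associated to an isotopy, an operation continuous in $C^1$—this yields that $\gH\ga\gm(\calb,\gro)$ is $C^1$-closed in $\gS\gy\gm_0(\calb,\gro)$. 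I would present this as the formal equivalence ``$\grG_\gro$ discrete $\iff$ $\gH\ga\gm$ is $C^1$-closed'' already flagged in the text, invoking Ono's resolution of the Flux Conjecture only as motivation and supplying the rational case directly.

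Finally, to upgrade $C^1$-closedness to the Fr\'echet Lie subgroup statement, I would run the same Weinstein-chart argument used in the proof of Proposition~\ref{etasub}: near the identity, Proposition~\ref{Weincharts} identifies $\gS\gy\gm(\calb,\gro)$ with a neighborhood of $0$ in the Fr\'echet space $Z^1(\calb)^\gG$ of closed $\gG$-invariant $1$-forms, and under this chart the Hamiltonian subgroup corresponds precisely to the closed subspace $B^1(\calb)^\gG$ of exact invariant $1$-forms. Since exactness is a closed linear condition (the de Rham differential is continuous and $d^{-1}(0)$-type splittings persist in the Fr\'echet setting by Hodge theory on the compact orbifold $\calb$, which furnishes a topological complement to $B^1$ inside $Z^1$), the subgroup is a closed embedded Fr\'echet submanifold, hence a Fr\'echet Lie subgroup, and its model space $B^1(\calb)^\gG$ is exactly the Lie algebra $\gh\ga\gm(\calb,\gro)$ of Hamiltonian vector fields under the isomorphism $X_H\leftrightarrow X_H\hook\gro=dH$. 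The main obstacle I anticipate is the first step: carefully justifying the period/flux computation on a symplectic \emph{orbifold} so that rationality of $[\gro]$ in ordinary cohomology—rather than integrality of $[p^*\gro]$ in orbifold cohomology—suffices to force $\grG_\gro$ into a lattice, which is where the interplay between $H_2^{orb}$ and $H_2$ described in Remark~\ref{ratclass} must be handled with care.
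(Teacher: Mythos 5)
Your proposal is correct and is essentially the paper's own argument: the paper's proof simply cites McDuff--Salamon \cite{McDSa} (p.~324 for discreteness of $\grG_\gro$ from rationality via the period argument, and Proposition~10.20 for the $C^1$-closedness and the Fr\'echet Lie subgroup structure), with the only caveat being that one works with $\gG$-invariant forms on the local uniformizing neighborhoods --- precisely the steps you unpack. The one point worth making explicit is that your identification of $\gH\ga\gm(\calb,\gro)$ with $B^1(\calb)^\gG$ in the Weinstein chart is not a matter of definition but itself uses discreteness (the chart image of a Hamiltonian diffeomorphism near the identity has flux class lying in $\grG_\gro$, hence equal to zero), so that step must come after your first one, exactly as you have arranged it.
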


\begin{proof}
As in  the manifold case the proof of the first statement follows from the definition of the flux map $\gF\gl$ (see pg 324 of \cite{McDSa}). The proof of the second statement is essentially the same as in Proposition 10.20 of \cite{McDSa} with the only caveat being that we work with invariant forms on the local uniformizing neighborhoods of the orbifold.
\end{proof}

\begin{proposition}\label{WeinHam}
Let $(\calb,\gro)$ be a compact symplectic orbifold with $[\gro]\in H^2(\calb,\bbq)$. Then there is a diffeomorphism between a neighborhood of the identity in $\gH\ga\gm(\calb,\gro)$ and a neighborhood of $0$ in the vector space $B^1(\calb)^\gG$ of $\gG$-invariant exact sections of the orbibundle $T^*\calb$. 
\end{proposition}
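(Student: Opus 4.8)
The plan is to obtain the chart as the restriction of Weinstein's chart from Proposition \ref{Weincharts}. That proposition provides a diffeomorphism $\Psi$ from a neighborhood $U$ of the identity in $\gS\gy\gm(\calb,\gro)$ onto a neighborhood $V$ of $0$ in the space $Z^1(\calb)^\gG$ of closed $\gG$-invariant $1$-forms, where $\Psi(f)=\sigma_f$ is the closed $1$-form whose graph corresponds to the graph of $f$ under the identification of a neighborhood of the diagonal in $\calb\times\calb$ (with symplectic form $(-\gro)\oplus\gro$) with a neighborhood of the zero section of $T^*\calb$. Since $\gH\ga\gm(\calb,\gro)\subset\gS\gy\gm_0(\calb,\gro)$ and, on a compact orbifold, the class map $Z^1(\calb)^\gG\to H^1(B,\bbr)$, $\sigma\mapsto[\sigma]$, is continuous (evaluation against a basis of $1$-cycles) with finite-dimensional target, the exact forms $B^1(\calb)^\gG=\{\sigma\in Z^1(\calb)^\gG:[\sigma]=0\}$ form a closed linear subspace of finite codimension $b_1(B)$. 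Thus it suffices to prove
$$\Psi\big(U\cap \gH\ga\gm(\calb,\gro)\big)=V\cap B^1(\calb)^\gG;$$
the left-hand side is then a neighborhood of the identity in $\gH\ga\gm(\calb,\gro)$, the right-hand side is a neighborhood of $0$ in $B^1(\calb)^\gG$, and $\Psi$ restricts to the desired diffeomorphism between them.

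The entire statement reduces to a single identity relating the chart to the flux homomorphism $\gF\gl$ of the exact sequence (\ref{fluxdef}), namely
$$\gF\gl(f)=[\sigma_f]\bmod\grG_\gro\in H^1(B,\bbr)/\grG_\gro\qquad(f\in U).$$
Granting this, both inclusions follow at once from $\gH\ga\gm(\calb,\gro)=\ker\gF\gl$. If $\sigma_f\in B^1(\calb)^\gG$ then $[\sigma_f]=0$, so $\gF\gl(f)=0$ and $f\in\gH\ga\gm(\calb,\gro)$. Conversely, if $f\in U\cap\gH\ga\gm(\calb,\gro)$ then $\gF\gl(f)=0$, so $[\sigma_f]\in\grG_\gro$; but the hypothesis $[\gro]\in H^2(B,\bbq)$ forces $\grG_\gro$ to be discrete by Proposition \ref{flux}, so after shrinking $U$ (hence $V$) to make $[\sigma_f]$ arbitrarily small we conclude $[\sigma_f]=0$, i.e. $\sigma_f\in B^1(\calb)^\gG$. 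This is exactly where the rationality of $[\gro]$ enters.

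To prove the flux identity I would compute the flux along the explicit isotopy $f_t:=\Psi^{-1}(t\sigma_f)$, which (after shrinking $V$ to be star-shaped) is a smooth path in $U$ from the identity to $f$; since $\gF\gl(f)$ is the reduction $\bmod\;\grG_\gro$ of the flux of \emph{any} isotopy from the identity to $f$, this path may be used. Writing $\frac{d}{dt}f_t=X_t\circ f_t$, the flux of $\{f_t\}$ — the class $\int_0^1[X_t\hook\gro]\,dt\in H^1(B,\bbr)$ — pairs with a $1$-cycle $\gamma$ as the symplectic area it sweeps,
$$\int_0^1\Big(\int_\gamma X_t\hook\gro\Big)\,dt=\int_{[0,1]\times\gamma}F_\gamma^*\gro,\qquad F_\gamma(t,s)=f_t(\gamma(s)).$$
A direct computation in the Lagrangian model identifies this swept area with $\int_\gamma\sigma_f=\langle[\sigma_f],[\gamma]\rangle$, because the graphs of $t\sigma_f$ interpolate linearly between the zero section and the graph of $\sigma_f$; hence $\gF\gl(f)=[\sigma_f]\bmod\grG_\gro$. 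This computation, carried out for manifolds in \cite{McDSa}, is the main obstacle; everything else is formal.

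Finally, the passage from manifolds to orbifolds requires only that every ingredient be taken invariantly on the local uniformizing charts $\tU_i$: the Lagrangian neighborhood, the forms $\sigma_f$, the swept-area integral and the flux are all built from $\grG_i$-invariant data, and because $\calb$ is compact the group $\gG$ of (\ref{redunred}) is finite, so restricting throughout to $\gG$-invariant sections reproduces the manifold argument verbatim, exactly as in the proof of Proposition \ref{Weincharts}.
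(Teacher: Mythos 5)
Your proposal is correct and follows essentially the same route as the paper: both obtain the chart by restricting the Weinstein chart of Proposition \ref{Weincharts} to $\gH\ga\gm(\calb,\gro)$, using the flux sequence (\ref{fluxdef}) together with the discreteness of $\grG_\gro$ from Proposition \ref{flux}, which is exactly where the rationality of $[\gro]$ enters. The only difference is explicitness: the paper quotes Proposition \ref{flux} to conclude that $\gH\ga\gm(\calb,\gro)$ is a closed embedded Lie subgroup and that the charts therefore restrict, whereas you verify directly the chart identity $\Psi\bigl(U\cap \gH\ga\gm(\calb,\gro)\bigr)=V\cap B^1(\calb)^\gG$ via the flux computation $\gF\gl(f)=[\sigma_f]\bmod\grG_\gro$ --- which is precisely the content of the McDuff--Salamon argument \cite{McDSa} that the paper's citation encapsulates.
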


\begin{proof}
Using $\gro$ we can identify  $B^1(\calb)^\gG$ with $\gh\ga\gm(\calb,\gro)$. Since the class $[\gro]$ is rational, Proposition \ref{flux} implies that $\gH\ga\gm(\calb,\gro)$ is a closed Lie subgroup of $\gS\gy\gm_0(\calb,\gro)$, and thus, an embedded submanifold. So the Weinstein charts for $\gH\ga\gm(\calb,\gro)$ are obtained by intersecting the Weinstein charts of $\gS\gy\gm_0(\calb,\gro)$ with $\gH\ga\gm(\calb,\gro)$.
\end{proof}

\section{Contactomorphisms and Symplectomorphisms}\label{consymgr}

\subsection{K-Contact Structures and Symplectic Orbifolds}
Now suppose that $\cals=(\xi,\eta,\Phi,g)$ is a quasi-regular K-contact structure on a compact manifold $M$, then by Theorem \ref{bgthm} the space of leaves $M/\calf_\xi=B$ has the structure of an almost K\"ahler orbifold $(\calb,\gro)$ such that $M$ is the total space of an $S^1$ orbibundle over $\calb$ with K\"ahler form $\gro$ satisfying $\pi^*\gro=d\eta.$ Let $\phi\in \gS\gy\gm(\calb,\gro)$ be a symplectomorphism, so that $\phi^*\gro=\gro.$ However, as in Corollary 8.1.9 of \cite{BG05} it is only the Hamiltonian symplectomorphisms $\gH\ga\gm(\calb,\gro)$ that lift to contactomorphisms of $M.$  The next theorem generalizes a result of Banyaga \cite{Ban78}.

\begin{theorem}\label{consymiso}
Let $\cals=(\xi,\eta,\Phi,g)$ be a quasi-regular K-contact structure on a compact manifold $M$. Then there is an isomorphism of Fr\'echet Lie groups 
$$\gC\go\gn(M,\eta)_0/\Xi\approx \gH\ga\gm(\calb,\gro),$$ 
where $\Xi\approx S^1$ is the central one-parameter subgroup generated by the Reeb vector field.
\end{theorem}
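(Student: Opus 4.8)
The plan is to realize the isomorphism as the map induced on the leaf space by the orbifold Boothby--Wang fibration, and to identify its kernel and image by moving back and forth between infinitesimal strict contact transformations and Hamiltonian functions via the isomorphism (\ref{liealgiso}). By Theorem \ref{bgthm}, $\pi:M\ra{1.3}\calb$ is a principal $S^1$-orbibundle with connection $1$-form $\eta$ and curvature $d\eta=\pi^*\gro$. First I would produce the descent homomorphism. Each $\phi\in\gC\go\gn(M,\eta)$ preserves $\eta$, hence preserves the Reeb field $\xi$ (canonically determined by $\eta$), hence commutes with the Reeb flow and maps leaves of $\calf_\xi$ to leaves; so $\phi$ descends to an orbifold diffeomorphism $\bar\phi$ of $\calb$, the descent being compatible with the uniformizing charts since $\phi$ is $\Xi$-equivariant and the local groups are the $S^1$-isotropy. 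From $\phi^*\eta=\eta$ we get $\pi^*(\bar\phi^*\gro)=\phi^*d\eta=d\eta=\pi^*\gro$, and injectivity of $\pi^*$ on forms gives $\bar\phi^*\gro=\gro$. This defines a homomorphism $\Pi:\gC\go\gn(M,\eta)\ra{1.5}\gS\gy\gm(\calb,\gro)$.

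Next I would compute the kernel. Since $\calb$ is connected, an element of $\ker\Pi$ is an $S^1$-equivariant bundle automorphism covering the identity, i.e. a gauge transformation $\phi(p)=g(\pi(p))\cdot p$ with $g:\calb\ra{1.2}S^1$; the transformation law $\phi^*\eta=\eta+\pi^*(g^{-1}dg)$ forces $dg=0$, so $g$ is constant and $\phi\in\Xi$. Thus $\ker\Pi=\Xi$, which by the remark preceding the theorem (here quasi-regularity is essential) is a closed central subgroup isomorphic to $S^1$, so $\gC\go\gn(M,\eta)_0/\Xi$ is a Fr\'echet Lie group.

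The heart of the matter is that $\mathrm{im}\,\Pi=\gH\ga\gm(\calb,\gro)$, which I would prove directly by the isotopy argument rather than through the flux. A path $\phi_t$ in $\gC\go\gn(M,\eta)$ with $\phi_0=\mathrm{id}$ is generated by time-dependent $X_t\in\gc\go\gn(M,\eta)$ (differentiate $\phi_t^*\eta=\eta$ to get $\pounds_{X_t}\eta=0$), and by (\ref{liealgiso}) the function $h_t:=\eta(X_t)$ is $\Xi$-invariant, so $h_t=\pi^*\bar h_t$ with $\bar h_t\in C^\infty(\calb)$. The Cartan formula gives $X_t\hook d\eta=-dh_t$, i.e. $X_t\hook\pi^*\gro=-\pi^*d\bar h_t$, and projecting shows the generator $\bar X_t=\pi_*X_t$ of $\bar\phi_t$ satisfies $\bar X_t\hook\gro=-d\bar h_t$. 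Hence $\bar\phi_t$ is a \emph{Hamiltonian} isotopy (the Hamiltonian is the single-valued function $-\bar h_t$), so $\Pi(\phi)\in\gH\ga\gm(\calb,\gro)$. Conversely, given $f\in\gH\ga\gm(\calb,\gro)$ with Hamiltonian isotopy $f_t$ and Hamiltonians $H_t$, let $X_t\in\gc\go\gn(M,\eta)$ be the strict contact field with $\eta(X_t)=-\pi^*H_t$; since $M$ is compact its flow $\phi_t$ exists, lies in $\gC\go\gn(M,\eta)_0$, and satisfies $\Pi(\phi_t)=f_t$ by the same computation. So $\Pi$ induces a group isomorphism $\bar\Pi:\gC\go\gn(M,\eta)_0/\Xi\ra{1.5}\gH\ga\gm(\calb,\gro)$.

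Finally I would upgrade $\bar\Pi$ to an isomorphism of Fr\'echet Lie groups by comparing charts. By Proposition \ref{etasub} together with (\ref{liealgiso}), $\gC\go\gn(M,\eta)$ is modeled near the identity on $C^\infty(M)^\Xi\cong C^\infty(\calb)$, so $\gC\go\gn(M,\eta)_0/\Xi$ is modeled on $C^\infty(\calb)/\bbr$ after quotienting out the Reeb (constant) direction; by Proposition \ref{WeinHam}, $\gH\ga\gm(\calb,\gro)$ is modeled on $B^1(\calb)^\gG\cong C^\infty(\calb)/\bbr$. Under these identifications the computation above shows $\bar\Pi$ is in charts the map $\bar h\mapsto -d\bar h$, a diffeomorphism with smooth inverse. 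I expect the genuinely delicate points to be twofold: (i) the orbifold bookkeeping—checking that descent and lifting are well defined in the orbifold category, that $\bar\phi$ is an honest orbifold symplectomorphism, and that the Ly\v{c}hagin and Weinstein charts match up $\gG$-equivariantly; and (ii) pinning down that the image is exactly $\gH\ga\gm$ rather than all of $\gS\gy\gm_0$, which rests on $\eta(X_t)$ furnishing a single-valued Hamiltonian on $\calb$. Rationality of $[\gro]$ (Remark \ref{ratclass}) and the flux result (Proposition \ref{flux}) enter only indirectly, through Proposition \ref{WeinHam}, to guarantee the Hamiltonian charts exist.
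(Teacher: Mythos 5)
Your proposal is correct, and its skeleton --- descend along the orbifold Boothby--Wang fibration, identify the kernel with $\Xi$, and match the local models $C^\infty(\calb)/\bbr$ on both sides --- is the same as the paper's. Where you genuinely differ is in how the image is pinned down to be exactly $\gH\ga\gm(\calb,\gro)$ rather than all of $\gS\gy\gm_0(\calb,\gro)$. The paper works infinitesimally: it proves the Lie algebra isomorphism $\gc\go\gn(M,\eta)/\gg_\xi\approx \gh\ga\gm(\calb,\gro)$ (Lemma \ref{Liealgiso}), concludes that the descent map $\nu$ is a local isomorphism near the identity, and then invokes the fact that $\nu$ is a monomorphism together with connectedness of $\gH\ga\gm(\calb,\gro)$ (a connected group is generated by any identity neighborhood) to obtain containment of the image in $\gH\ga\gm(\calb,\gro)$ and surjectivity in one stroke. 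You instead argue at the group level: you descend a smooth isotopy in $\gC\go\gn(M,\eta)_0$ to a symplectic isotopy with explicit single-valued Hamiltonians built from $h_t=\eta(X_t)$, and conversely lift a Hamiltonian isotopy using the strict contact field with $\eta(X_t)=-\pi^*H_t$ --- the latter being exactly the mechanism the paper deploys separately in Proposition \ref{maxtorconsym} for lifting tori. Your route buys a self-contained, explicit identification of the image, plus an honest kernel computation via gauge transformations (which the paper disposes of as ``easy to check'' injectivity of $\nu$), at the cost of needing smooth path-connectedness of the identity component and of re-proving the lifting step; the paper's route is shorter because it pushes all the analysis into the Lie algebra isomorphism and lets connectivity do the global work. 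Both arguments rest on the same pivot: under the isomorphism (\ref{liealgiso}), $\gc\go\gn(M,\eta)$ corresponds to basic functions, which are precisely the pullbacks of Hamiltonians on $\calb$, so strict contact isotopies can only ever project to \emph{Hamiltonian} ones.
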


\begin{proof}
It is well known that $\Xi$ is central, and since $\cals$ is quasi-regular, it is a closed Lie subgroup of $\gC\go\gn(M,\eta)_0$ which by Proposition \ref{etasub} is a Fr\'echet Lie group. So we have an exact sequence of Lie groups
$$\BOne\ra{1.8} \Xi\ra{1.8} \gC\go\gn(M,\eta)_0\ra{1.8} \gC\go\gn(M,\eta)_0/\Xi\ra{1.8} \BOne.$$
I want to identify $\gC\go\gn(M,\eta)_0/\Xi$ with $\gH\ga\gm(\calb,\gro).$
Let $f\in \gC\go\gn(M,\eta)_0/\Xi$ and consider the orbifold submersion $\pi:M\ra{1.4} B$ whose fibres are the $S^1$ orbits of $\Xi.$  Define a map $\bar{f}:B\ra{1.5} B$ by $\pi\circ f= \bar{f}\circ \pi.$ Since $\Xi$ is central, $\bar{f}$ is well defined, and it is easy to check that the map $\nu(f)= \bar{f}$ is injective and a group homomorphism. I claim that $\bar{f}\in \gH\ga\gm(\calb,\gro)$. First we show that $\bar{f}$ is an orbifold diffeomorphism. Let $x_0\in M$ be a point with isotropy subgroup $G_{x_0}$ under the $S^1$ action. By the slice theorem there is a neighborhood of $x_0$ of the form $\tU\times S^1$ where $\tU$ is $G_{x_0}$-invariant. (Note that since $G_{x_0}$ is a subgroup of $S^1$ it is necessarily cyclic). Then $(\tU,G_{x_0},\varphi)$ is a local uniformizing neighborhood for the orbifold $\calb$, where $\varphi$ is defined by $\varphi(\tx)=\pi(\tx,t)=\pi(x).$ Since $f$ is $S^1$-invariant its restriction to $\tU\times S^1$ provides the necessary smooth lift of $\bar{f}.$ This shows that $\bar{f}$ is a smooth orbifold map. But since $f$ is a diffeomorphism we can apply the same reasoning to $f^{-1}$ showing that $\bar{f}$ is an orbifold diffeomorphism.
To see that $\nu(f)=\bar{f}\in \gS\gy\gm(\calb,\gro)_0$ we check that $\bar{f}^*\gro_i=\gro_i$ on each uniformizing neighborhood. This follows easily from the relations $f^*\eta=\eta$, the definition of $\bar{f}$, and $\pi^*\gro=d\eta.$ 

To proceed further we consider the Lie algebras.
\begin{lemma}\label{Liealgiso}
There is an isomorphism of Lie algebras $\gc\go\gn(M,\eta)/\gg_\xi\approx \gh\ga\gm(\calb,\gro).$
\end{lemma}

\begin{proof}
A choice of contact 1-form $\eta$ gives a well-known isomorphism $X\mapsto \eta(X)$ between $\gc\go\gn(M,\cald)$ and the smooth functions $C^\infty(M)$ with Jacobi bracket defined by $\{f,g\}=\eta([X_f,X_g]).$ Under this isomorphism the subalgebra $\gc\go\gn(M,\eta)$ maps to the $\Xi$-invariant smooth functions $C^\infty(M)^\Xi$, and the subalgebra $\gg_\xi\subset \gc\go\gn(M,\eta)$ maps to the constant functions. But there is an isomorphism between $C^\infty(M)^\Xi$ and the smooth functions $C^\infty(\calb)$ on the orbifold $\calb.$ To see this we remark that local uniformizing groups of $\calb$ are precisely the isotropy groups of the $\Xi$ action on $M,$ the $\Xi$ invariant smooth functions on $M$ push down to functions on $B$ with smooth lifts to the local uniformizing neighborhoods $\tU.$ Conversely, it is precisely these functions that pullback to $\Xi$-invariant functions on $M.$ So we also have isomorphisms 
$$\gc\go\gn(M,\eta)/\gg_\xi\approx C^\infty(M)^\Xi/\{\rm constants\}\approx C^\infty(\calb)/\{\rm constants\}\approx \gh\ga\gm(\calb,\gro),$$
which proves the result.
\end{proof}

Continuing with the proof of Theorem \ref{consymiso}, we see from the lemma that the Fr\'echet Lie group $\gC\go\gn(M,\eta)_0/\Xi$ is modelled on  $\gc\go\gn(M,\eta)/\gg_\xi\approx \gh\ga\gm(\calb,\gro).$ So there is a local isomorphism of Fr\'echet Lie groups $\gC\go\gn(M,\eta)_0/\Xi\approx \gH\ga\gm(\calb,\gro).$ But since the map $\nu:\gC\go\gn(M,\eta)_0/\Xi \ra{1.5} \gS\gy\gm(\calb,\gro)_0$ is a monomorphism of Fr\'echet Lie groups and $\gH\ga\gm(\calb,\gro)$ is connected the result follows.
\end{proof}

Alternatively, we say that $\gC\go\gn(M,\eta)_0$ is an $S^1$ central extension of $\gH\ga\gm(\calb,\gro)$, that is, we have an exact sequence of groups:
\begin{equation}\label{Hamexact}
\BOne\ra{1.8} \Xi\ra{1.8} \gC\go\gn(M,\eta)_0\fract{\grr}{\ra{1.8}} \gH\ga\gm(\calb,\gro)\ra{1.8} \BOne.
\end{equation}

\subsection{Lifting Tori} Given a quasi-regular K-contact structure on $M$, we want to be able to lift Hamiltonian tori to obtain tori in the contactomorphism group of $M$.
 
\begin{proposition}\label{maxtorconsym}
Let $\cals=(\xi,\eta,\Phi,g)$ be a quasi-regular K-contact structure on a compact manifold $M$. Then
any torus $T\subset \gH\ga\gm(\calb,\gro)$ lifts to a torus $\grr^{-1}(T)$ in $\gC\go\gn(M,\eta)_0$ where $\grr$ is the map defined by the exact sequence {\rm(\ref{Hamexact})}. Furthermore, $T$
is a maximal torus in $\gH\ga\gm(\calb,\gro)$ if and only if $\grr^{-1}(T)\times \Xi$ is a maximal torus in $\gC\go\gn(M,\cald)$.
\end{proposition}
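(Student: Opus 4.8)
The plan is to read both assertions off the central extension (\ref{Hamexact}), using only that $\Xi\cong S^1$ is central and one-dimensional in $\gC\go\gn(M,\eta)_0$ and that $\grr$ is surjective with kernel $\Xi$. \emph{First} I would check that $\grr^{-1}(T)$ is a torus. It sits in a short exact sequence $\BOne\to\Xi\to\grr^{-1}(T)\to T\to\BOne$, hence is a finite-dimensional Lie group of dimension $\dim T+1$, and as an extension of the compact connected group $T$ by the compact connected group $\Xi$ it is compact and connected. To see it is abelian, note that for $a,b\in\grr^{-1}(T)$ the commutator $aba^{-1}b^{-1}$ is sent by $\grr$ to $[\grr(a),\grr(b)]=e$, so the derived subgroup of $\grr^{-1}(T)$ is contained in $\Xi$; but the derived subgroup of a compact connected Lie group is connected and semisimple, and the only connected semisimple subgroup of the circle $\Xi$ is trivial. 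Thus $\grr^{-1}(T)$ is a compact connected abelian Lie group, i.e. a torus, and it contains $\Xi$; this is the torus written $\grr^{-1}(T)\times\Xi$ in the statement (the factor $\Xi=\ker\grr$ being automatically present).

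\emph{Maximality of $T$ implies maximality of $\grr^{-1}(T)$.} Suppose a torus $T'\subset\gC\go\gn(M,\cald)$ strictly contains $\grr^{-1}(T)$, so $\dim T'>\dim T+1$. Since $\Xi\subset\grr^{-1}(T)\subset T'$, the Reeb field $\xi$ lies in the Lie algebra $\gt'$ of $T'$, so $T'$ is of Reeb type for the form $\eta$; Lemma \ref{lerlem2} (equivalently Proposition \ref{centraleta}, since $\gt'$ is abelian and therefore centralizes $\xi$) then gives $T'\subset\gC\go\gn(M,\eta)$, and $T'$ being connected, $T'\subset\gC\go\gn(M,\eta)_0$. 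Now $\grr(T')$ is a torus in $\gH\ga\gm(\calb,\gro)$ containing $\grr(\grr^{-1}(T))=T$, and as $\ker\grr=\Xi\subset T'$ we have $\dim\grr(T')=\dim T'-1>\dim T$. Hence $\grr(T')\supsetneq T$, contradicting maximality of $T$.

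\emph{Maximality of $\grr^{-1}(T)$ implies maximality of $T$.} If a torus $S\subset\gH\ga\gm(\calb,\gro)$ strictly contains $T$, then by the first step $\grr^{-1}(S)$ is a torus lying in $\gC\go\gn(M,\eta)_0\subset\gC\go\gn(M,\cald)$, with $\grr^{-1}(S)\supseteq\grr^{-1}(T)$ and $\dim\grr^{-1}(S)=\dim S+1>\dim T+1=\dim\grr^{-1}(T)$. Thus $\grr^{-1}(S)\supsetneq\grr^{-1}(T)$, contradicting maximality of $\grr^{-1}(T)$.

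The step I expect to be the main obstacle is the first one: unlike in the symplectic manifold case (where one can use simplicity of $\gh\ga\gm$ and Whitehead's lemma to split the extension), the sequence (\ref{Hamexact}) need not split, especially in the orbifold setting, so one cannot simply present the lifted torus as a product $T\times\Xi$. Instead one must argue abstractly that a compact connected central $S^1$-extension of a torus is again a torus, which the derived-subgroup argument accomplishes. A secondary delicate point is the descent in the forward direction: one must know that \emph{every} torus enlarging $\grr^{-1}(T)$ preserves the fixed contact form $\eta$, so that $\grr$ applies to it, and this is precisely where the forced membership $\xi\in\gt'$ (coming from $\Xi\subset T'$) is used through Lemma \ref{lerlem2}.
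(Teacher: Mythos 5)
Your maximality arguments in both directions are sound, and your key observation in the forward direction --- that any torus strictly containing $\grr^{-1}(T)$ automatically contains $\Xi$, hence has $\xi$ in its Lie algebra, hence preserves $\eta$ by Proposition \ref{centraleta} and Lemma \ref{lerlem2}, and therefore descends under $\grr$ --- is exactly the mechanism the paper invokes through Theorem \ref{ReebKcon}. The genuine gap is in your first step, and it is not the subtlety you flagged (non-splitting of (\ref{Hamexact})) but a more basic one: the inference ``$\grr^{-1}(T)$ sits in a short exact sequence $\BOne\to\Xi\to\grr^{-1}(T)\to T\to\BOne$, hence is a finite-dimensional Lie group of dimension $\dim T+1$'' is a non sequitur in this setting. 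The sequence (\ref{Hamexact}) is a sequence of \emph{Fr\'echet} Lie groups; in that category there is no Cartan closed-subgroup theorem and no automatic extension theory, so a subgroup of $\gC\go\gn(M,\eta)_0$ that is algebraically an extension of one finite-dimensional Lie group by another need not a priori be a Lie subgroup, or even be locally compact or locally connected. Your abelianness argument then inherits the problem: it quotes structure theory of compact connected \emph{Lie} groups (the derived group is connected and semisimple), i.e.\ it presupposes exactly the Lie structure in question. This missing step is the real content of the proposition, and it is why the paper, following Lerman \cite{Ler02b}, proves the first statement by hand: it lifts each Hamiltonian field $X\in\gt$ to the contact field $\tX$ with $\eta(\tX)=\pi^*H$ (the Hamiltonian hypothesis killing the obstruction in $H^1(\calf_\xi)$), checks that the lifts commute, and then verifies that the lifted flows are still \emph{periodic} --- that is, it proves directly that the lifted group is a compact torus, the very fact your abstract argument assumes.

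Your outline is repairable, and repaired it would give a genuinely different proof. Since $\grr$ is open (it is the quotient projection by $\Xi$ composed with the isomorphism of Theorem \ref{consymiso}) and $\grr^{-1}(T)$ is a saturated preimage, the restriction $\grr^{-1}(T)\to T$ is an open surjection identifying $T$ with $\grr^{-1}(T)/\Xi$; because quotient projections of topological groups by compact subgroups are proper, $\grr^{-1}(T)$ is compact, and connectedness follows since the fibres and the base are connected and the projection is open. For abelianness without circularity, note that the commutator takes values in the central subgroup $\Xi$ and so descends to a continuous bihomomorphism $T\times T\to\Xi$, which is trivial because $T$ is connected and the character group of a torus is discrete. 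Finally, a compact group of smooth diffeomorphisms acting continuously on $M$ is a Lie group by Bochner--Montgomery; alternatively, Pontryagin duality applied to $0\to\widehat{T}\to\widehat{\grr^{-1}(T)}\to\bbz\to 0$ gives $\widehat{\grr^{-1}(T)}\cong\bbz^{\dim T+1}$, so $\grr^{-1}(T)$ is a torus (and, incidentally, the restricted extension does split as groups, even though (\ref{Hamexact}) itself has no homomorphic section). With these ingredients supplied your argument stands; without them, the central claim of the proposition is assumed rather than proved.
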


\begin{proof}
The proof of the first statement is essentially that given in \cite{Ler02b}. 
Let $T$ be a Lie subgroup of $\gH\ga\gm(\calb,\gro)$ that is isomorphic as an abstract Lie group to a finite dimensional torus. We know how to lift vector fields in the Lie algebra $\gt$ of $T$ at least in the case of smooth manifolds. But the same method works for orbifolds since vector fields in $ \gh\ga\gm(\calb,\gro)$ are invariant under the local uniformizing groups, that is under $\gG$. (The action of tori on orbifolds has been studied by Haefliger and Salem \cite{HaSa91}). So let $X\in \gt$ and look for a vector field $\tX\in \gc\go\gn(M,\eta)$ that satisfies
$$0=\pounds_{\tX}\eta=\tX\hook d\eta + d(\eta(\tX))=X^h\hook d\eta + d(\eta(\tX))$$ 
where $X^h$ is the horizontal lift of $X.$ Generally there is an obstruction to solving this which lies in the basic cohomology group $H^1(\calf_\xi).$ However, since $X$ is Hamiltonian this obstruction vanishes. Indeed, we have $X\hook\gro=-dH$ for some $\gG$-invariant smooth function $H.$ But also $X^h\hook d\eta=\pi^*(X\hook\gro)=-\pi^*dH=-d\pi^*H$. So we get a solution by taking $\eta(\tX)=\pi^*H.$ Let $\{X_i\}_i$ be a basis of for $\gt$ consisting of periodic Hamiltonian vector fields and let $H_i$ be their Hamiltonians. Since the $X_i$s commute the $H_i$s commute under the Poisson bracket. But then the corresponding functions $\eta(\tX_i)$s commute under the Jacobi-Poisson bracket on $M.$ Thus, their lifts $\tX_i$ commute as well. Furthermore, since $\eta(\tX_i)$ is basic for each $i$, the $X_i$s commute with the Reeb field $\xi.$ So the lifted Lie algebra spanned by $\{\tX_i,\xi\}_i$ is Abelian. It remains to show that the $\tX_i$s are periodic vector fields on $M.$ So let $X\in \gt$ be periodic with period $\grt$ and fix a point $p\in M$. The loop traced out by $X$ in $B$ is $\mathscr{L}(t)=\{{\rm exp} tX ~|~t\in [0,\grt]\}$ with $\mathscr{L}(0)=\mathscr{L}(\grt)=\pi(p).$ Lifting this loop to $M$ gives a path $\tilde{\mathscr{L}}(t)=\{{\rm exp} tX^h ~|~t\in [0,\grt]\}$ begining at $p$ and ending at a point $p'\in \pi^{-1}(\pi(p)).$ Since the endpoint is in the same fibre as $p$ we can flow using the Reeb vector field from $p'$ to $p.$ But this can be accomplished by $\eta(\tX)(\pi(p'))\xi$ since $\eta(\tX)$ is basic so it is constant along the flow of $\xi$ and $\eta(\tX)\xi$ commutes with $X^h$. Since $p$ is arbitrary and $\eta(\tX)$ is a smooth basic function, $\tX$ is periodic, and generates a smooth circle action on $M$ that preserves $\eta.$ Hence, $\grr^{-1}(T)$ is a torus in  $\gC\go\gn(M,\eta)_0$ commuting with the Reeb flow. Moreover, if $T$ is maximal in $\gH\ga\gm(\calb,\gro)$ then $\grr^{-1}(T)\times \Xi$ is maximal in $\gC\go\gn(M,\eta)_0$, and thus, maximal in $\gC\go\gn(M,\cald)$ by Theorem \ref{ReebKcon}.
\end{proof}

Recall that $S^1$-orbibundles on an orbifold $\calb$ are classified by elements of Haefliger's orbifold cohomology group $H^2_{orb}(\calb,\bbz)=H^2(B\calb,\bbz)$ where $B\calb$ is the classifying space of the orbifold and gives rise to a natural projection $p:B\calb\ra{1.6} \calb$ (cf. Section 4.3 of \cite{BG05} for details). We now have

\begin{theorem}\label{contor}
Let $(\calb,\gro)$ be a compact symplectic orbifold with $[p^*\gro]\in H^2_{orb}(\calb,\bbz),$ and let $\pi:M\ra{1.5} \calb$ be the $S^1$ orbibundle defined by $[\gro]$. Furthermore, let $\eta$ be a connection 1-form on $M$ such that $\pi^*\gro=d\eta$ and define $\cald=\ker\eta.$ Then
two maximal tori $T_1$ and $T_2$ are conjugate in $\gH\ga\gm(\calb,\gro)$ if and only if the lifted maximal tori $\grr^{-1}(T_1)\times \Xi$ and $\grr^{-1}(T_2)\times \Xi$ are conjugate in $\gC\go\gn(M,\cald)$.
\end{theorem}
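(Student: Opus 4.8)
The plan is to deduce both implications from the central extension (\ref{Hamexact}), $\BOne\to\Xi\to\gC\go\gn(M,\eta)_0\stackrel{\grr}{\to}\gH\ga\gm(\calb,\gro)\to\BOne$, of Theorem \ref{consymiso}, together with the lifting result Proposition \ref{maxtorconsym}. Throughout I write $\tT_i:=\grr^{-1}(T_i)$; since $\Xi=\ker\grr$ we have $\Xi\subset\tT_i$ and $\tT_i/\Xi=T_i$, so the torus written $\grr^{-1}(T_i)\times\Xi$ in the statement is just the preimage $\tT_i$, and the Reeb field $\xi$ of $\eta$ lies in both $\tilde{\gt}_1$ and $\tilde{\gt}_2$. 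The forward implication is then immediate: if $\bar\psi\in\gH\ga\gm(\calb,\gro)$ satisfies $\bar\psi T_1\bar\psi^{-1}=T_2$, lift it (surjectivity of $\grr$) to $\psi\in\gC\go\gn(M,\eta)_0$; because $\Xi$ is central, $\psi\tT_1\psi^{-1}=\grr^{-1}(\bar\psi T_1\bar\psi^{-1})=\tT_2$, and $\psi\in\gC\go\gn(M,\eta)_0\subset\gC\go\gn(M,\cald)$.

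For the converse the whole point is to replace an arbitrary conjugator by one preserving $\eta$: it suffices to show that if $\phi\in\gC\go\gn(M,\cald)$ satisfies $\phi\tT_1\phi^{-1}=\tT_2$, then there is $\psi\in\gC\go\gn(M,\eta)_0$ with $\psi\tT_1\psi^{-1}=\tT_2$, for then $\grr(\psi)$ conjugates $T_1$ to $T_2$ by the same central-extension computation as above. I would build $\psi$ in three steps. First, $\phi^*\eta$ is $\tT_1$-invariant: for $t\in\tT_1$ the element $\phi t\phi^{-1}$ lies in $\tT_2\subset\gC\go\gn(M,\eta)$ (Lemma \ref{lerlem2}), so $t^*(\phi^*\eta)=(\phi t\phi^{-1})^*\!\phi^*\eta=\phi^*\eta$; hence its Reeb field ${\rm Ad}_\phi\xi$ is $\tT_1$-invariant as well. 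Second, since ${\rm Ad}_\phi\xi$ commutes with $\tilde{\gt}_1$ and generates the circle $\phi\Xi\phi^{-1}$, the compact connected abelian group $\tT_1\cdot\phi\Xi\phi^{-1}$ is a torus containing $\tT_1$; maximality of $\tT_1$ in $\gC\go\gn(M,\cald)$ forces $\phi\Xi\phi^{-1}\subset\tT_1$, so ${\rm Ad}_\phi\xi\in\tilde{\gt}_1$, and in fact ${\rm Ad}_\phi\xi\in\tilde{\gt}_1^+=\tilde{\gt}_1\cap\calr^+(\cald)$. Note also, using Lemma \ref{R+comlem}(2) with the commuting pair $\xi,{\rm Ad}_\phi\xi\in\tilde{\gt}_1$, that $\xi\bigl(\eta({\rm Ad}_\phi\xi)\bigr)=0$; thus the conformal factor in $\phi^*\eta=f\eta$, namely $f=1/\eta({\rm Ad}_\phi\xi)$, is basic and $\tT_1$-invariant, and both Reeb fields $\xi$ and ${\rm Ad}_\phi\xi$ are quasi-regular with the same Boothby--Wang quotient $(\calb,\gro)$.

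The third and decisive step is to produce $n\in N_{\gC\go\gn(M,\cald)}(\tT_1)$ with ${\rm Ad}_n({\rm Ad}_\phi\xi)=\xi$: granting this, the map $\psi:=\phi n$ has Reeb field $\xi$, so $\psi^*\eta=\eta$ (the contact form in $\gC^+(\cald)$ is determined by its Reeb field), while $\psi\tT_1\psi^{-1}=\phi\tT_1\phi^{-1}=\tT_2$ since $n$ normalizes $\tT_1$. This is exactly a statement of Weyl-equivalence inside the maximal torus $\tT_1$ between the two Reeb fields $\xi$ and ${\rm Ad}_\phi\xi$, and I expect it to be the main obstacle. Its plausibility rests on the fact, established in the previous step, that $\xi$ and ${\rm Ad}_\phi\xi$ are quasi-regular elements of the single convex cone $\tilde{\gt}_1^+$ sharing the symplectic quotient $(\calb,\gro)$; the uniqueness half of the orbifold Boothby--Wang correspondence (Theorem \ref{kconinversionthm}) together with the conjugation induced by $T$-equivalence (Lemma \ref{Tequivlem}) should supply the required normalizing element. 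The delicate feature that makes a naive Moser/Gray deformation fail here is precisely that the factor $f$ is basic: it cannot be absorbed by any $\tT_1$-equivariant isotopy, so the discrepancy between $\xi$ and ${\rm Ad}_\phi\xi$ must be removed by a genuine element of the normalizer rather than by the identity component alone.

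Finally I would carry out the bookkeeping needed to land in the identity component, so that $\grr(\psi)$ is actually \emph{Hamiltonian} rather than merely symplectic. The normalizing element $n$ and the conjugation from Lemma \ref{Tequivlem} can be taken in $\gC\go\gn_0(M,\cald)$, so $\psi$ lies in $\gC\go\gn(M,\eta)\cap\gC\go\gn_0(M,\cald)$; since $\psi$ preserves $\eta$ it descends to $\bar\psi\in\gS\gy\gm(\calb,\gro)_0$ with $\bar\psi T_1\bar\psi^{-1}=T_2$, and the flux exact sequence (\ref{fluxdef}) with Proposition \ref{flux} (valid here because $[\gro]$ is rational by Remark \ref{ratclass}) lets me correct $\bar\psi$ by an element of $\gH\ga\gm(\calb,\gro)$, which does not affect the conjugacy class of $T_2$, to arrange $\bar\psi\in\gH\ga\gm(\calb,\gro)=\grr\bigl(\gC\go\gn(M,\eta)_0\bigr)$. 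This completes the converse, and the whole argument hinges on the equivariant normalization of the Reeb field in the third step.
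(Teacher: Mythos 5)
Your forward implication and the first two steps of your converse are sound: the $\tilde{T}_1$-invariance of $\phi^*\eta$, the fact that its Reeb field ${\rm Ad}_\phi\xi$ then lies in $\tilde{\gt}_1^+$ by maximality (cf.\ Lemma \ref{adReeb}), and the basicness of the conformal factor $f_\phi$ are all correct, and are close in spirit to computations the paper actually makes. The proposal fails at exactly the place you flag as decisive: the existence of $n\in N_{\gC\go\gn(M,\cald)}(\tilde{T}_1)$ with ${\rm Ad}_n({\rm Ad}_\phi\xi)=\xi$ is never proved, and the results you invoke cannot supply it. Theorem \ref{kconinversionthm} contains no uniqueness clause; on the contrary, the paper emphasizes immediately after stating it that the structure produced from $(\calb,\gro)$ is unique only up to gauge transformation and contact isotopy, which is far weaker than the on-the-nose equivariant normalization you need. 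Lemma \ref{Tequivlem} merely says that conjugate maximal tori have conjugate Sasaki cones; it does not produce an element normalizing $\tilde{T}_1$ itself and moving one prescribed Reeb field to another. Moreover, once the theorem is known one sees that the conjugating $\phi$ already normalizes $\tilde{T}_1$ (because the two lifted tori turn out to be \emph{equal}), so your step 3 is a posteriori essentially equivalent to the theorem: the hard content has not been proved but merely relocated into an unproven claim.

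There is a second, independent error in the final bookkeeping. The flux map of sequence (\ref{fluxdef}) is a group homomorphism $\gF\gl:\gS\gy\gm_0(\calb,\gro)\ra{1.5} H^1(\calb,\bbr)/\grG_\gro$ whose kernel is exactly $\gH\ga\gm(\calb,\gro)$, so $\gF\gl(\bar\psi h)=\gF\gl(\bar\psi)$ for every $h\in\gH\ga\gm(\calb,\gro)$: a non-Hamiltonian symplectomorphism can never be ``corrected'' into a Hamiltonian one by composition with Hamiltonian elements. (In addition, $\psi=\phi n$ has no reason to lie in $\gC\go\gn_0(M,\cald)$, since $\phi$ is an arbitrary conjugator, and even membership there would not by itself place the descended map in $\gS\gy\gm_0(\calb,\gro)$.) The paper's proof of the converse avoids both problems with a short direct computation: assuming $\phi$ conjugates the lifted tori, choose bases $X_0=\xi,X_1,\dots,X_k$ and $Y_0=\xi,Y_1,\dots,Y_k$ of the two lifted Lie algebras, write ${\rm Ad}_{\phi^{-1}}(X_i)=\sum_j a_{ij}Y_j$, and evaluate with $\eta$ to obtain the moment-map relation (\ref{mommaptor}); since $Y_jf_\phi=0$ and $Y_j\in\gc\go\gn(M,\eta)$, applying $Y_j$ gives $\eta([Y_j,X_i])=0$, hence $[Y_j,X_i]=0$ by Lemma \ref{Dcon}, and maximality forces the two lifted algebras, hence the lifted tori, hence $T_1$ and $T_2$, to coincide --- contradicting non-conjugacy. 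No normalizing element and no flux argument are ever needed. You should either adopt that computation or supply a genuine proof of your step 3; as it stands the proposal is incomplete at its central point.
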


\begin{proof}
The only if part is an immediate consequence of Proposition \ref{maxtorconsym} and Theorem \ref{consymiso}. Now assume that $T_1$ and $T_2$ are not conjugate in $\gH\ga\gm(\calb,\gro)$, and suppose there is a $\phi\in \gC\go\gn(M,\cald)$ such that 
$${\rm Ad}_{\phi^{-1}}(\grr^{-1}(T_1)\times \Xi)=\grr^{-1}(T_2)\times \Xi,$$ 
where $\eta$ is the contact form in $\gC(\cald)$ whose Reeb vector field is $\xi.$ In terms of the Lie algebras we have 
\begin{equation}\label{conliealg}
 {\rm Ad}_{\phi^{-1}}(\grr^{-1}(\gt_1)\times \gg_\xi)=\grr^{-1}(\gt_2)\times \gg_\xi.
\end{equation}
Choose bases $X_0=\xi,X_1,\cdots,X_k$ for $\grr^{-1}(\gt_1)\times \gg_\xi$ and $Y_0=\xi,Y_1,\cdots,Y_k$ for $\grr^{-1}(\gt_2)\times \gg_\xi,$ respectively. Then Equation (\ref{conliealg}) says there are real numbers $a_{ij}$ such that 
$${\rm Ad}_{\phi^{-1}}(X_i)=\sum_j a_{ij}Y_j.$$
In terms of the moment map this implies
$$\eta({\rm Ad}_{\phi^{-1}}(X_i))=\eta(\sum_j a_{ij}Y_j)=\sum_j a_{ij}\eta(Y_j)=a_{i0} +\sum_{j=1}^k a_{ij}\eta(Y_j).$$
On the other hand the left hand side of this equation equals $(\phi^*\eta)(X_i)=f_\phi\eta(X_i).$
Setting $i=0$ gives $f_\phi=a_{00} +\sum_{j=1}^k a_{0j}\eta(Y_j),$ and the coefficients must be given such that this is everywhere positive. So the moment maps for the two tori are related by
\begin{equation}\label{mommaptor}
\eta(X_i)=\frac{a_{i0} +\sum_{j=1}^k a_{ij}\eta(Y_j)}{a_{00} +\sum_{j=1}^k a_{0j}\eta(Y_j)},\qquad i=1,\cdots,k.
\end{equation}
Since the $X_i$ and $Y_i$ all commute with $\xi$, all these moment maps are basic functions on $M.$
But also $Y_jf_\phi=0$ for all $j$ since the $Y_i$s commute, and $Y_j\in \gc\go\gn(M,\eta)$. But then Equation (\ref{mommaptor}) implies that $Y_j\eta(X_i)=0$ for all $i,j,$ and this implies that $\eta([Y_j,X_i])=0$ for all $i,j.$ But then Lemma \ref{Dcon} implies that $[Y_j,X_i]=0$, which by maximality implies that $\gt_1=\gt_2.$ This contradicts the fact that $T_1$ and $T_2$ are not conjugate. 
\end{proof}

Recall that to each compatible almost complex structure $J\in \calj(\cald)$ on a contact manifold of K-contact type we associate a conjugacy class $\calc_T(J)\in \sqcup_{\gr\geq 1}\gS\gC_T(\cald,\gr)$ of maximal tori in $\gC\go\gn(M,\cald).$
Furthermore, a choice of almost complex structure $J$ on $M$ gives an almost complex structure $\hat{J}$ on $\calb.$ We are interested in the converse. Let $(\calb,\gro)$ be a symplectic orbifold with $[\gro]\in H^2_{orb}(\calb,\bbz)$ and cyclic local uniformizing groups. Let $M$ be the total space of the corrresponding $S^1$ orbibundle over $\calb$ obtained by the orbifold Boothby-Wang construction, and assume that the local uniformizing groups inject into the $S^1.$ As in the contact case we denote by $\gS\gC_T(\calb,\gro)$ the set of conjugacy classes of maximal tori in the group of Hamiltonian diffeomorphisms $\gH\ga\gm(\calb,\gro)$, and by $\gS\gC_T(\calb,\gro;\gr)$ the subset of conjugacy classes of maximal tori of dimension $\gr$. We also denote the cardinality of $\gS\gC_T(\calb,\gro)$ by $\gn(\gro)$, and the cardinality of $\gS\gC_T(\calb,\gro; \gr)$ by $\gn(\gro,\gr)$, respectively. Notice that if $\cald$ and $\gro$ are related by the orbifold Boothby-Wang construction then $\gn_R(\cald)\geq \gn(\gro)$. Then from Proposition \ref{maxtorconsym} and Theorem \ref{contor} we have

\begin{corollary}\label{symconT}
There is an injection $\gri:\gS\gC_T(\calb,\gro;\gr)\ra{1.8} \gS\gC_T(\cald,\gr+1)$.
\end{corollary}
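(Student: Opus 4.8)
The plan is to build $\gri$ as the descent to conjugacy classes of the lifting map of Proposition \ref{maxtorconsym}, and then to extract both its well-definedness and its injectivity directly from the biconditional of Theorem \ref{contor}. Concretely, given a class in $\gS\gC_T(\calb,\gro;\gr)$ I would choose a representative maximal torus $T\subset\gH\ga\gm(\calb,\gro)$ of dimension $\gr$ and apply Proposition \ref{maxtorconsym}: its lift $\grr^{-1}(T)\times\Xi$ is a torus in $\gC\go\gn(M,\cald)$ which, because $T$ is maximal, is itself maximal, and which has dimension $\gr+1$ since the central circle $\Xi$ generated by the Reeb field contributes the extra dimension. I would then set $\gri([T])$ to be the conjugacy class of $\grr^{-1}(T)\times\Xi$ in $\gC\go\gn(M,\cald)$, so that by construction $\gri$ carries $\gS\gC_T(\calb,\gro;\gr)$ into $\gS\gC_T(\cald,\gr+1)$ (in fact into the Reeb-type subset $\gS\gC_{RT}(\cald,\gr+1)$).

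Next I would verify that $\gri$ is independent of the chosen representative. If $T_1$ and $T_2$ are conjugate in $\gH\ga\gm(\calb,\gro)$, the forward (``only if'') direction of Theorem \ref{contor} says precisely that $\grr^{-1}(T_1)\times\Xi$ and $\grr^{-1}(T_2)\times\Xi$ are conjugate in $\gC\go\gn(M,\cald)$, so $\gri([T_1])=\gri([T_2])$ and the map is well defined. For injectivity I would run the converse direction: if $\gri([T_1])=\gri([T_2])$, meaning the two lifts are conjugate in $\gC\go\gn(M,\cald)$, then Theorem \ref{contor} forces $T_1$ and $T_2$ to be conjugate in $\gH\ga\gm(\calb,\gro)$, whence $[T_1]=[T_2]$.

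Since Theorem \ref{contor} is an ``if and only if,'' both halves are essentially formal, and the statement is genuinely a corollary rather than a fresh argument. The only real care needed is bookkeeping: confirming the dimension increase by exactly one (the content of the factor $\times\,\Xi$ in Proposition \ref{maxtorconsym}) and checking that the standing hypotheses here --- $[\gro]$ integral with $[p^*\gro]\in H^2_{orb}(\calb,\bbz)$, cyclic local uniformizing groups injecting into $S^1$ so that $M$ is a smooth manifold carrying the connection form $\eta$ with $d\eta=\pi^*\gro$ --- are exactly those under which both Proposition \ref{maxtorconsym} and Theorem \ref{contor} were established. Thus the main (and essentially only) obstacle is not a new obstruction but verifying that the chosen representative really lands in a maximal torus of the correct dimension; the lack of surjectivity, reflecting that not every maximal $(\gr+1)$-torus of $\gC\go\gn(M,\cald)$ need arise as a lift from this one quasi-regular structure, is likewise automatic from the one-directional nature of the construction.
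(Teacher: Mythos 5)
Your proposal is correct and takes essentially the same route as the paper, which states the corollary as an immediate consequence of Proposition \ref{maxtorconsym} (supplying the lift $\grr^{-1}(T)\times\Xi$, its maximality, and the dimension count $\gr+1$) and Theorem \ref{contor} (whose forward direction gives well-definedness on conjugacy classes and whose converse gives injectivity). Your extra observations---that the image actually lies in $\gS\gC_{RT}(\cald,\gr+1)$ and that surjectivity is neither claimed nor expected---are consistent with the paper's surrounding discussion.
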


Now suppose that $(\calb,\gro)$ has a $T$-invariant almost complex structure $\hat{J}$ that is compatible with $\gro$ in the sense that $\gro(\hat{J}X,Y)$ defines a Riemannian metric on $\calb.$ This gives the orbifold $\calb$ an almost K\"ahler structure which by Theorem \ref{kconinversionthm} lifts to a K-contact structure $(\xi,\eta,\Phi,g)$ on $M$ where $d\eta=\pi^*\gro,$ and $\Phi|_\cald=J$ which is the horizontal lift of $\hat{J}.$ In analogy with the Section \ref{secalmostcomp} we define $\calj(\gro)$ to be the set of almost complex structures $\hat{J}$ on the symplectic orbifold $(\calb,\gro)$ that are compatible with $\gro.$ Let $\gA\gu\gt(\gro,\hat{J})$ denote the automorphism group of the almost K\"ahler structure. Since $\calb$ is compact, so is $\gA\gu\gt(\gro,\hat{J})$. Let  $\calj(\gro,\gr)$ denote the subset of almost complex structures such that $\gA\gu\gt(\gro,\hat{J})$ has rank $\gr.$ Then we have a function $\gQ_{\gro,\gr}:\calj(\gro,\gr)\ra{1.5} \gS\gC_T(\calb,\gro;\gr)$ together with a commutative diagram
\begin{equation}\label{commdiag}
\begin{matrix}
\calj(\cald,\gr+1) &\fract{\gQ_{\cald,\gr}}{\ra{2.8}}& \gS\gC_T(\cald,\gr+1)\\
         \uparrow && \uparrow \\
         \calj(\gro,\gr) &\fract{\gQ_{\gro,\gr}}{\ra{2.8}}& \gS\gC_T(\calb,\gro;\gr) \\
        \uparrow && \uparrow \\
         0 && 0 &.
\end{matrix}
\end{equation}
Summarizing we have
\begin{theorem}\label{cardReeb}
Let $(\calb,\gro)$ be a compact symplectic orbifold with $N$ conjugacy classes of maximal tori in its group of Hamiltonian isotopies $\gH\ga\gm(\calb,\gro)$. Suppose further that $\gro$ defines an integral class in $H^2_{orb}(\calb,\bbz)$ and that the corresponding $S^1$-orbibundle has a smooth total space $M$. Let $\eta$ be a contact 1-form obtained by the orbifold Boothby-Wang construction and let $\xi$ denote its Reeb vector field. Then the contact manifold $(M,\cald)$ where $\cald=\ker\eta$ has an $N$-bouquet $\gB_N(\cald)$ of Sasaki cones based at $\xi$ where $N=\gn(\gro)$.
\end{theorem}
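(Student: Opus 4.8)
The plan is to manufacture, from each of the $\gn(\gro)$ conjugacy classes of maximal tori in $\gH\ga\gm(\calb,\gro)$, one Sasaki cone in the bouquet $\gB(\cald)$, and then to check that these $N$ cones are pairwise distinct, all contain the Reeb field $\xi$, and hence glue into a connected configuration based at $\xi$. First I would fix representatives $T_1,\dots,T_N$ of the $N=\gn(\gro)$ conjugacy classes of maximal tori in $\gH\ga\gm(\calb,\gro)$. For each $\gra$, compactness of $T_\gra$ lets me average an arbitrary $\gro$-compatible metric over $T_\gra$ and take its associated almost complex structure, obtaining a $T_\gra$-invariant $\hat{J}_\gra\in\calj(\gro)$, so that $T_\gra\subset\gA\gu\gt(\gro,\hat{J}_\gra)$. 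By the orbifold Boothby-Wang inversion, Theorem \ref{kconinversionthm}, the almost K\"ahler structure $(\gro,\hat{J}_\gra)$ lifts to a K-contact structure $\cals_\gra=(\xi,\eta,\Phi_\gra,g_\gra)$ on $M$ with $d\eta=\pi^*\gro$ and $\Phi_\gra|_\cald=J_\gra$ the horizontal lift of $\hat{J}_\gra$. This exhibits each $J_\gra$ as a compatible almost complex structure on $\cald$ of K-contact type, and since $\hat{J}_\gra$ is $T_\gra$-invariant the lifted torus $\grr^{-1}(T_\gra)\times\Xi$ preserves $\cals_\gra$, i.e.\ $\grr^{-1}(T_\gra)\times\Xi\subset\gA\gu\gt(\cals_\gra)\subset\gC\gR(\cald,J_\gra)$.

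Next I would pin down the image $\gQ(J_\gra)$. By the maximality half of Proposition \ref{maxtorconsym}, the torus $\grr^{-1}(T_\gra)\times\Xi$ is maximal already in the full group $\gC\go\gn(M,\cald)$; since it lies in $\gC\gR(\cald,J_\gra)$ it is then a maximal torus of $\gC\gR(\cald,J_\gra)$ as well, so by the definition (\ref{Jmap}) of $\gQ$ we get $\gQ(J_\gra)=\bigl[\grr^{-1}(T_\gra)\times\Xi\bigr]$. (In the exceptional case where $(\cald,J_\gra)$ is the standard CR sphere the group $SU(n+1,1)$ still has a unique maximal torus up to conjugacy, so the identification persists.) Now Theorem \ref{contor} says that $\grr^{-1}(T_\gra)\times\Xi$ and $\grr^{-1}(T_\grb)\times\Xi$ are conjugate in $\gC\go\gn(M,\cald)$ precisely when $T_\gra$ and $T_\grb$ are conjugate in $\gH\ga\gm(\calb,\gro)$. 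Since the $T_\gra$ were chosen pairwise non-conjugate, the classes $\gQ(J_\gra)$ are pairwise distinct, whence by Definition \ref{Tequivdef} the $J_\gra$ represent $N$ distinct $T$-equivalence classes in $\calj_T(\cald)$ and so determine $N$ distinct Sasaki cones $\grk(\cald,J_\gra)$.

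It remains to see that every cone contains $\xi$ and that the union is connected. Each $\cals_\gra$ is K-contact with Reeb field the Boothby-Wang field $\xi$, so $\pounds_\xi\Phi_\gra=0$, i.e.\ $\xi\in\gc\gr(\cald,J_\gra)$; as $\eta(\xi)=1>0$ this places $\xi$ in $\gc\gr^+(\cald,J_\gra)$, and since $\xi$ lies in the Lie algebra of the maximal torus $\grr^{-1}(T_\gra)\times\Xi$ it lies in the unreduced Sasaki cone $\gt^+(\cald,J_\gra)$, hence represents a point of $\grk(\cald,J_\gra)$. Thus all $N$ cones share the common point $\xi$, so their union is connected. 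Taking $\cala=\{[J_1],\dots,[J_N]\}\subset\calj_T(\cald)$ in Definition \ref{Sasbou} then yields the asserted $N$-bouquet $\gB_N(\cald)$ based at $\xi$ with $N=\gn(\gro)$.

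The step I expect to be the main obstacle is the identification $\gQ(J_\gra)=[\grr^{-1}(T_\gra)\times\Xi]$ --- that the fundamental map selects exactly the lifted torus and not some strictly larger torus inside $\gC\gR(\cald,J_\gra)$. This is precisely where the maximality assertion of Proposition \ref{maxtorconsym} (which rules out any enlargement even in the ambient group $\gC\go\gn(M,\cald)$) does the essential work; everything else is a matter of assembling Theorems \ref{kconinversionthm} and \ref{contor} with the averaging construction.
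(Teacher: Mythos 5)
Your proposal is correct and follows essentially the same route as the paper: the paper obtains this theorem by "summarizing" exactly the ingredients you assemble — a $T_\gra$-invariant compatible $\hat{J}_\gra$ on $(\calb,\gro)$, the orbifold Boothby--Wang inversion (Theorem \ref{kconinversionthm}) to lift it to a K-contact structure with the fixed $\eta$, Proposition \ref{maxtorconsym} for maximality of the lifted torus, and Theorem \ref{contor} (equivalently Corollary \ref{symconT}) for pairwise non-conjugacy, with all cones sharing the Boothby--Wang Reeb field $\xi$. Your write-up in fact makes explicit two points the paper leaves implicit — the averaging construction producing the invariant $\hat{J}_\gra$, and the argument that maximality in $\gC\go\gn(M,\cald)$ forces $\gQ(J_\gra)$ to be the class of the lifted torus — but these are elaborations of the same argument, not a different one.
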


As we shall see by example in Section \ref{vardim} the cones can have varying dimensions.

\subsection{The Join Construction and Tori}
First I briefly recall the join construction in \cite{BGO06} which assumed that the structures were Sasakian, but the construction works equally well for K-contact structures. In fact, it is convenient to ``forget'' any K-contact or Sasakian structure, and just think in terms of quasi-regular contact structures, since that is all that is needed for the construction.
We denote by $\calk\calo$ the set of
compact quasi-regular K-contact orbifolds, by $\calk\calm$ the
subset of $\calk\calo$ that are smooth manifolds, and by $\calr\subset
\calk\calm$ the subset of compact, simply connected, regular K-contact manifolds.
The set $\calk\calo$ is topologized with the $C^{m,\gra}$ topology, and the
subsets are given the subspace topology. The set $\calk\calo$ is graded by dimension, that is,
$$\calk\calo =\bigoplus_{n=0}^\infty \calk\calo_{2n+1},$$
and similarly for $\calk\calm$ and $\calr.$  

For each pair of relatively prime positive integers $(k_1,k_2)$ we define a graded
multiplication
\begin{equation}\label{joinmaps}
\star_{k_1,k_2}:\calk\calo_{2n_1+1}\times \calk\calo_{2n_2+1}\ra{1.5}
\calk\calo_{2(n_1+n_2)+1}
\end{equation}
as follows: Let $\cals_1,\cals_2\in \calk\calo$ of dimension $2n_1+1$ and
$2n_2+1$ respectively. Since each orbifold $\cals_i$ has a quasi-regular K-contact
structure,
its Reeb vector field generates a locally free circle action, and the quotient space $\calb_i$ by this
action has a natural orbifold structure. Thus, there is a locally free
action of the 2-torus $T^2$ on the product orbifold $\cals_1\times \cals_2,$ and the
quotient orbifold is the product of orbifold $\calb_1\times \calb_2.$ (Locally free torus actions on orbifolds have been studied in \cite{HaSa91}). Now the contact 1-form on $\cals_i$ determines a
symplectic form $\gro_i$ on the orbifold $\calb_i$, but in order to obtain an integral orbifold
cohomology class $[\gro_i]\in H^2(\calb_i,\bbz)$ we need to assure that the period of a
generic orbit is one. By a result of Wadsley \cite{Wad} the period function on a
quasi-regular K-contact orbifold is lower semi-continuous and constant on the dense open set of regular
orbits. This is because on such an orbifold all Reeb orbits are geodesics. Thus, by a
transverse homothety we can normalize the period function to be the constant $1$ on the
dense open set of regular orbits. In this case the symplectic forms $\gro_i$ define integer
orbifold cohomology classes $[\gro_i]\in H^2_{orb}(\calb_i,\bbz).$ This is the orbifold cohomology defined by Haefliger \cite{Hae84} (see also \cite{BG00a}). Now each
pair of positive integers $k_1,k_2$ gives a symplectic form $k_1\gro_1+k_2\gro_2$ on the
product. Furthermore,  $[k_1\gro_1+k_2\gro_2]\in H^2_{orb}(\calb_1\times \calb_2,\bbz),$
and thus defines an $S^1$ V-bundle over the orbifold $\calb_1\times \calb_2$ whose total 
space is an orbifold that we denote by $\cals_1\star_{k_1,k_2} \cals_2$ and refer to as the
$(k_1,k_2)$-{\it join} of $\cals_1$ and $\cals_2.$  By choosing a connection
1-form $\eta_{k_1,k_2}$ on $\cals_1\star_{k_1,k_2} \cals_2$ whose curvature is
$\pi^*(k_1\gro_1+k_2\gro_2)$, we obtain a quasi-regular contact structure which is unique up to a gauge
transformation of the form $\eta\mapsto \eta +d\psi$ where $\psi$ is a smooth basic
function. This defines the maps in (\ref{joinmaps}). When $\cals_i$ are quasi-regular contact
structures on the compact manifolds $M_i,$ respectively, we shall use the notation $M_1\star_{k_1,k_2} M_2$ instead of $\cals_1\star_{k_1,k_2} \cals_2$. Notice also
that if $\gcd(k_1,k_2)=m$ and we define $(k'_1,k'_2)=(\frac{k_1}{m},\frac{k_2}{m}),$ then
$\gcd(k'_1,k'_2)=1$ and $M_1\star_{k_1,k_2} M_2\approx (M_1\star_{k'_1,k'_2}
M_2)/\bbz_m.$ In this case the cohomology class $k'_1\gro_1+k'_2\gro_2$ is indivisible in
$H^2_{orb}(\calb_1\times \calb_2,\bbz).$ Note also that $M_1\star_{k_1,k_2} M_2$ can be
realized as the quotient space $(M_1\times M_2)/S^1(k_1,k_2)$ where the $S^1$   action
is given by the map
\begin{equation}\label{s1action}
(x,y)\mapsto (e^{ik_2\theta}x,e^{-ik_1\theta}y).
\end{equation}
We are interested in the case when the join $M_1\star_{k_1,k_2} M_2$ is a smooth manifold. This can be guaranteed by demanding the condition $\gcd(\upsilon_1k_2,\upsilon_2k_1)=1$ where $\upsilon_i$ is the order of the orbifold $\calb_i$.

\begin{theorem}\label{jointhm}
Let $\cald_{k_1,k_2}$ be the contact structure on the $(k_1,k_2)$-join $M_1\star_{k_1,k_2} M_2$ constructed as described above from the contact manifolds $(M_i,\cald_i)$. Let $\gn(\gro_i)$ denote the cardinality of the set $\gS\gC_T(\calb_i,\gro_i)$ of conjugacy classes of maximal tori in the group of Hamiltonian isotopies $\gH\ga\gm(\calb_i,\gro_i)$ of the base orbifolds $\calb_i$. Then on the join $M_1\star_{k_1,k_2} M_2$ the cardinality $\gn_R(\cald_{k_1,k_2})$ of the set of conjugacy classes of maximal tori of Reeb type in the contactomorphism group $\gC\go\gn(M_1\star_{k_1,k_2} M_2,\cald_{k_1,k_2})$ satisfies $\gn_R(\cald_{k_1,k_2})\geq \gn(\gro_1)\gn(\gro_2)$. In particular, $\cald_{k_1,k_2}$ has an $N$-bouquet of Sasaki cones where $N=\gn(\gro_1)\gn(\gro_2)$.
\end{theorem}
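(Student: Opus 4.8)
The plan is to reduce the assertion to a purely symplectic counting inequality on the base of the join and then to feed that inequality into Theorem \ref{cardReeb}. By construction the $(k_1,k_2)$-join is exactly the total space of the $S^1$ orbibundle obtained by the orbifold Boothby--Wang construction over the product orbifold $(\calb_1\times\calb_2,\,k_1\gro_1+k_2\gro_2)$: the connection $1$-form $\eta_{k_1,k_2}$ satisfies $d\eta_{k_1,k_2}=\pi^*(k_1\gro_1+k_2\gro_2)$, the class $[k_1\gro_1+k_2\gro_2]$ lies in $H^2_{orb}(\calb_1\times\calb_2,\bbz)$, and the $\gcd$ hypothesis makes the total space the smooth manifold $M_1\star_{k_1,k_2}M_2$. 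Hence Theorem \ref{cardReeb} applies verbatim with $\gro=k_1\gro_1+k_2\gro_2$, producing a bouquet of Sasaki cones on $\cald_{k_1,k_2}$ whose number of cones is $\gn(k_1\gro_1+k_2\gro_2)$; with Lemma \ref{numcones} this gives $\gn_R(\cald_{k_1,k_2})\geq\gn(k_1\gro_1+k_2\gro_2)$. So the theorem follows once we prove $\gn(k_1\gro_1+k_2\gro_2)\geq\gn(\gro_1)\gn(\gro_2)$ for the group $\gH\ga\gm(\calb_1\times\calb_2,\,k_1\gro_1+k_2\gro_2)$.

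To prove this inequality I would exhibit an injection
$$\Theta:\gS\gC_T(\calb_1,\gro_1)\times\gS\gC_T(\calb_2,\gro_2)\ra{1.6}\gS\gC_T(\calb_1\times\calb_2,\,k_1\gro_1+k_2\gro_2),\qquad([T_1],[T_2])\mapsto[T_1\times T_2].$$
The first step is to verify that $T_1\times T_2$ actually lands in the Hamiltonian group: if $X_i\hook\gro_i=dH_i$ on $\calb_i$, then on the product $(X_1,X_2)\hook(k_1\gro_1+k_2\gro_2)=d(k_1\pi_1^*H_1+k_2\pi_2^*H_2)$, so a product of Hamiltonian isotopies is a Hamiltonian isotopy and commuting factors yield a genuine torus in $\gH\ga\gm(\calb_1\times\calb_2,\,k_1\gro_1+k_2\gro_2)$. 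Well-definedness of $\Theta$ on conjugacy classes is then immediate, since conjugating the factors by $\phi_i\in\gH\ga\gm(\calb_i,\gro_i)$ conjugates the product by $\phi_1\times\phi_2$.

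The heart of the argument, and the step I expect to be the main obstacle, is to show that $T_1\times T_2$ is \emph{maximal} when each $T_i$ is maximal in its factor. By averaging I may take a $T_i$-invariant compatible almost complex structure $\hat J_i\in\calj(\gro_i)$, so that $T_i\subset\gA\gu\gt(\gro_i,\hat J_i)$; arguing as in the orbifold form of Theorem \ref{autcr} and in the construction of $\gQ_{\gro_i,\gr}$ preceding diagram (\ref{commdiag}), $T_i$ is a maximal torus of the compact group $\gA\gu\gt(\gro_i,\hat J_i)$. The product $\hat J_1\oplus\hat J_2\in\calj(k_1\gro_1+k_2\gro_2)$ is $(T_1\times T_2)$-invariant, with compatible metric the product almost K\"ahler metric $k_1g_1\oplus k_2g_2$. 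Any torus $T'$ containing $T_1\times T_2$ may, after averaging over $T'$, be placed inside $\gA\gu\gt(k_1\gro_1+k_2\gro_2,\hat J_1\oplus\hat J_2)$. The key point is that the identity component of this automorphism group preserves each factor: a connected group of isometries of a Riemannian product cannot interchange or mix the de Rham factors (interchanges being discrete), so it preserves the integrable distributions $T\calb_1$ and $T\calb_2$ and hence splits as a product. Thus its identity component is contained in, and therefore equals, $\gA\gu\gt(\gro_1,\hat J_1)_0\times\gA\gu\gt(\gro_2,\hat J_2)_0$; since every torus lies in the identity component and maximal tori of a product of compact groups are products of maximal tori, this forces $T'=T_1\times T_2$.

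For injectivity, suppose $\psi\in\gH\ga\gm(\calb_1\times\calb_2,\,k_1\gro_1+k_2\gro_2)$ conjugates $T_1\times T_2$ to $T_1'\times T_2'$. Averaging over $T_1'\times T_2'$ and again invoking the de Rham rigidity of the product metric, I would argue that $\psi$ must respect the factor splitting; here it is essential that the factor-interchanging maps are \emph{not} Hamiltonian, so they cannot be used for conjugation inside $\gH\ga\gm$. Consequently $\psi$ induces $\psi_i\in\gH\ga\gm(\calb_i,\gro_i)$ with $\psi_iT_i\psi_i^{-1}=T_i'$, i.e. $[T_i]=[T_i']$, so $\Theta$ is injective. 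Granting maximality and injectivity, $\Theta$ embeds all $\gn(\gro_1)\gn(\gro_2)$ ordered pairs of conjugacy classes, yielding $\gn(k_1\gro_1+k_2\gro_2)\geq\gn(\gro_1)\gn(\gro_2)$ and hence, through Theorem \ref{cardReeb} and Lemma \ref{numcones}, the claimed $N$-bouquet of Sasaki cones on $\cald_{k_1,k_2}$ with $N=\gn(\gro_1)\gn(\gro_2)$. The subtlety to watch throughout is that the product-metric de Rham decomposition genuinely detects the factors even when $\calb_1$ and $\calb_2$ happen to be isometric, which is precisely why only the \emph{connected} automorphism group, and only \emph{Hamiltonian} conjugations, may be used in the maximality and injectivity steps.
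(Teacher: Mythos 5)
Your outer strategy coincides with the paper's: exhibit product tori in $\gH\ga\gm(\calb_1\times\calb_2,\,k_1\gro_1+k_2\gro_2)$, deduce the symplectic inequality $\gn(k_1\gro_1+k_2\gro_2)\geq\gn(\gro_1)\gn(\gro_2)$, and transfer it to $\gn_R(\cald_{k_1,k_2})$ through the orbifold Boothby--Wang correspondence (you via Theorem \ref{cardReeb} and Lemma \ref{numcones}, the paper directly via Theorem \ref{contor}); that reduction is sound. The divergence is in the middle step. The paper never claims that $T_1\times T_2$ is maximal --- it only records that it is contained in some maximal torus, and asserts the non-conjugacy of products coming from non-conjugate factors --- whereas your map $\Theta$ needs maximality of $T_1\times T_2$ even to be well defined as a map into $\gS\gC_T(\calb_1\times\calb_2,\,k_1\gro_1+k_2\gro_2)$, and the argument you give for it contains a genuine gap.

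The gap is the averaging step. You assert that a torus $T'\supset T_1\times T_2$ ``may, after averaging over $T'$, be placed inside $\gA\gu\gt(k_1\gro_1+k_2\gro_2,\hat{J}_1\oplus\hat{J}_2)$.'' Averaging over $T'$ produces \emph{some} $T'$-invariant compatible almost complex structure (average the compatible metric, then reconstruct $J$ by polar decomposition), but it is one manufactured for $T'$, not the pre-assigned product structure $\hat{J}_1\oplus\hat{J}_2$: a torus in $\gH\ga\gm$ preserves \emph{some} compatible structure --- this is exactly how surjectivity in Theorem \ref{surjthm} is obtained --- but it need not preserve a structure chosen in advance. Since $T'$ has no reason to preserve the product almost K\"ahler metric, the de Rham rigidity of that metric tells you nothing about $T'$, and the maximality proof collapses at its key point. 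The injectivity step fails for the same reason: the conjugating element $\psi$ is an arbitrary element of $\gH\ga\gm(\calb_1\times\calb_2,\,k_1\gro_1+k_2\gro_2)$, and ``averaging over $T_1'\times T_2'$'' does not modify $\psi$, so nothing forces $\psi$ to respect the splitting $T\calb_1\oplus T\calb_2$. There are also two secondary problems: your preliminary claim that $T_i$ is maximal in $\gA\gu\gt(\gro_i,\hat{J}_i)$ does not follow from maximality in $\gH\ga\gm(\calb_i,\gro_i)$, since $\gA\gu\gt(\gro_i,\hat{J}_i)$ may contain symplectic but non-Hamiltonian tori (e.g.\ when $b_1(\calb_i)\neq 0$, as for $T^2\times S^2$); and the de Rham decomposition theory you invoke is stated for manifolds, while the bases $\calb_i$ here are orbifolds. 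What genuinely needs proof --- and what the paper itself asserts without proof --- is the non-conjugacy of $T_1\times T_2$ and $T_1'\times T_2'$ for non-conjugate factors; an argument would have to proceed via conjugation-invariant data of the torus actions (fixed-point sets, isotropy weights, or moment-map images as in Propositions \ref{momconeequiv} and \ref{momconeequiv2}), not via metric rigidity applied to objects that do not preserve the metric.
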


\begin{proof}
Let $T_i$ be a maximal torus in $\gH\ga\gm(\calb_i,\gro_i)$. Then the image of $T_1\times T_2$ under the natural subgroup inclusion
$$\gri:\gH\ga\gm(\calb_1,\gro_1)\times \gH\ga\gm(\calb_2,\gro_2)\ra{2.0} \gH\ga\gm(\calb_1\times \calb_2,\gro_{k_1,k_2}),$$
where $\gro_{k_1,k_2}=k_1\gro_1+k_2\gro_2$, is contained in a maximal torus $T$ in $\gH\ga\gm(\calb_1\times \calb_2,k_1\gro_1+k_2\gro_2)$. Furthermore, if $T_1$ and $T_1'$ are non-conjugate tori in $\gH\ga\gm(\calb_1,\gro_1)$, then $T_1\times T_2$ and $T_1'\times T_2$ are non-conjugate in $\gH\ga\gm(\calb_1\times \calb_2,\gro_{k_1,k_2})$ and similarly when 1 and 2 are interchanged. It follows that $\gn(\gro_{k_1,k_2})\geq \gn(\gro_1)\gn(\gro_2)$. The result then follows by applying Theorem \ref{contor}.
\end{proof}

More can be said in the toric case, for then $T_1\times T_2$ is not only maximal, but has maximal dimension. So 
\begin{corollary}\label{joincor}
The join of toric contact manifolds of Reeb type is a toric contact manifold of Reeb type. 
\end{corollary}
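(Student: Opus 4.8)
The plan is to reduce everything to the base orbifolds and to the dimension bookkeeping already packaged in Theorem \ref{jointhm} and Proposition \ref{maxtorconsym}. First I would record what ``toric of Reeb type'' says about each factor. If $(M_i,\cald_i)$ is toric of Reeb type with $\dim M_i=2n_i+1$, then by definition its maximal torus of Reeb type $T_i\subset\gC\go\gn(M_i,\cald_i)$ has the maximal possible dimension $n_i+1$. Choosing (via Theorem \ref{ReebKcon}) a quasi-regular Reeb field in $\gt_i^+$, the quotient of $M_i$ by the central Reeb circle $\Xi_i$ is the symplectic orbifold $(\calb_i,\gro_i)$ of dimension $2n_i$, and Theorem \ref{consymiso} together with Corollary \ref{symconT} shows that $T_i$ descends to a maximal torus $\bar T_i\subset\gH\ga\gm(\calb_i,\gro_i)$ of dimension $n_i$. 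Thus each $\calb_i$ is a \emph{symplectic toric orbifold}: its Hamiltonian torus has exactly half the dimension of the orbifold.

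Next I would pass to the product. The product torus $\bar T_1\times\bar T_2$ acts effectively and Hamiltonianly on $(\calb_1\times\calb_2,\gro_{k_1,k_2})$ with $\gro_{k_1,k_2}=k_1\gro_1+k_2\gro_2$, and it has dimension $n_1+n_2=\tfrac12\dim(\calb_1\times\calb_2)$. Since a torus acting effectively and Hamiltonianly on a $2m$-dimensional symplectic orbifold has dimension at most $m$, the torus $\bar T_1\times\bar T_2$ already has maximal dimension and is therefore a maximal torus in $\gH\ga\gm(\calb_1\times\calb_2,\gro_{k_1,k_2})$. This is precisely the sharpening, in the toric setting, of the mere inclusion $\bar T_1\times\bar T_2\subset T$ used in the proof of Theorem \ref{jointhm}.

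Finally I would lift. Applying Proposition \ref{maxtorconsym} to the orbifold Boothby-Wang circle bundle $M_1\star_{k_1,k_2}M_2\ra{1.3}\calb_1\times\calb_2$ defined by $[\gro_{k_1,k_2}]$, the maximal torus $\bar T_1\times\bar T_2$ lifts to a maximal torus $\grr^{-1}(\bar T_1\times\bar T_2)\times\Xi$ in $\gC\go\gn(M_1\star_{k_1,k_2}M_2,\cald_{k_1,k_2})$ of dimension $(n_1+n_2)+1$. This is the maximal dimension $n+1$ for a contact manifold with $\dim=2(n_1+n_2)+1$, so the join is toric; and since the lifted torus contains the central subgroup $\Xi$ generated by the Reeb field $\xi$ of $\eta_{k_1,k_2}$, it is a torus of Reeb type. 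Hence $M_1\star_{k_1,k_2}M_2$ is a toric contact manifold of Reeb type.

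The main obstacle---really the only nontrivial point---is the dimension accounting: one must verify that the toric hypothesis on each factor forces the bases $\calb_i$ to be symplectic toric (Hamiltonian torus of dimension $n_i$), and hence that the product torus $\bar T_1\times\bar T_2$ is genuinely maximal rather than merely contained in a maximal torus as in Theorem \ref{jointhm}. Once this half-dimension count is in place, maximality is automatic from the standard bound on effective Hamiltonian torus actions, and preservation of Reeb type under the lift is immediate from the central factor $\Xi$.
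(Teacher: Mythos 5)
Your proof is correct and takes essentially the same route as the paper, whose entire argument is the remark preceding the corollary: in the toric case the product torus $T_1\times T_2$ is not merely contained in a maximal torus (as in Theorem \ref{jointhm}) but already has maximal dimension $n_1+n_2$, so its lift via Proposition \ref{maxtorconsym} has the maximal dimension $n_1+n_2+1$ and contains the Reeb circle, hence is of Reeb type. Your write-up just makes explicit the half-dimension bookkeeping on the base orbifolds and the standard bound on effective Hamiltonian torus actions that the paper leaves implicit.
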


\section{Toric Contact Structures and Toric Symplectic Orbifolds}\label{torconsect}

\subsection{Toric Contact Structures}\label{torrev}
Toric contact structures were studied in detail in \cite{BM93}. They appear in two guises, those where the action of the torus is free, and those where it is not. When the dimension of $M$ is greater than $3$, the torus action is of Reeb type if and only if the action is not free and the moment cone contains no non-zero linear subspace. Moreover, it was shown in \cite{BG00b} that toric contact structures of Reeb type are all of Sasaki type. A complete classification of toric contact structures was given by Lerman in \cite{Ler02a} (the three dimensional case is somewhat special). The toric contact structures of Reeb type are classified by their moment cones when $n>1$ which are certain rational polyhedral cones called good cones by Lerman. Here is the construction.  Let $(M,\cald,T)$ be a contact manifold of dimension $2n+1$ with $n>1$ with a non-free action of an $(n+1)$-dimensional torus $T=T^{n+1}$. A polyhedral cone is said to be {\it good} if the annihilator of a linear span of a codimension $k$ face, i.e. a codimension $k$ subspace, is the Lie algebra of a subtorus $T^{n+1-k}$ of $T^{n+1},$ and the normals to the face form a basis of the integral lattice $\bbz_{T^{n+1-k}}$ of $T^{n+1-k}.$ Moreover, two such toric contact manifolds are isomorphic if and only if their polyhedral cones differ by a $GL(n+1,\bbz)$ transformation. Consider the moment map $\Upsilon:\cald^o_+\ra{1.6} \gt^*$ given by Equation (\ref{tormom}). The image of $\Upsilon$ is a good polyhedral cone $C(\cald,T).$ Now recall that the Sasaki cone $\gt^+(\cald,J)$ is the interior of the dual cone $C^*(\cald,T)$ to the moment cone $C(\cald,T)$. For every $\xi\in \grk(\cald,J)$ the intersection of the hyperplane $\eta(\xi)=1$ with the moment cone $C(\cald,T)$ is a simple convex polytope $P$, where a convex polytope is {\it simple} if there are precisely $n$ codimension one faces, called {\it facets}, meeting at each vertex.  Moreover, $\xi$ is quasi-regular if and only if it lies in the lattice $\ell$ of circle subgroups of $T,$ so $\xi$ is quasi-regular precisely when the polytope $P$ is rational \cite{BG00b}. In this case we let $p_i$ denote the outward primitive normal vector to the $ith$ facet. Then there is a positive integer $m_i$ such that $m_ip_i\in\ell$ for each facet. This give rise to a labelled (LT) polytope $P_{LT}$ of Lerman and Tolman \cite{LeTo97}. This procedure can be reversed. That is, starting with any LT-polytope $P_{LT}$ we construct a good rational polyhedral cone $C(P_{LT})$ as follows:
 \begin{equation}\label{pltcone}
C(P_{LT})=\{tP_{LT}~|~t\in [0,\infty)\}.
\end{equation}
\begin{proposition}\label{conepoly}
$C(P_{LT})$ is the good rational polyhedral cone associated to the K-contact manifold constructed from the almost K\"ahler orbifold $(\calz,\gro)$ by Theorem \ref{kconinversionthm}.
\end{proposition}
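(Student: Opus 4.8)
The plan is to start from the labelled polytope $P_{LT}$, pass through the symplectic toric orbifold it defines, lift to the contact side by Theorem \ref{kconinversionthm}, and then compute the contact moment cone directly and show it coincides with $C(P_{LT})$. By the Lerman--Tolman correspondence \cite{LeTo97} the LT-polytope $P_{LT}$ is precisely the image of the moment map $\mu_\calz$ of a symplectic toric orbifold $(\calz,\gro)$ under the action of an $n$-torus $T^n\subset \gH\ga\gm(\calz,\gro)$, the labels $m_i$ recording the cyclic isotropy along the facet preimages. Applying the orbifold Boothby--Wang Theorem \ref{kconinversionthm} produces the $S^1$-orbibundle $\pi:M\to\calz$ with $d\eta=\pi^*\gro$ and Reeb field $\xi$ generating the fibre $S^1=\Xi$. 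By Proposition \ref{maxtorconsym} the torus $T^n$ lifts to $M$ and together with $\Xi$ yields an effective action of $T^{n+1}$ preserving $\eta$; since $\xi\in\gt$ this realizes $M$ as a toric contact manifold of Reeb type, to which Lerman's classification \cite{Ler02a} associates a good rational polyhedral cone $C(\cald,T^{n+1})$.

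The heart of the argument is the explicit relation between the two moment maps. Using the splitting $\gt=\gt^{n+1}=\gt^n\oplus\bbr\xi$, with dual splitting $\gt^*=(\gt^n)^*\oplus\bbr\xi^*$, I would evaluate the contact moment map $\mu_\eta$ of (\ref{momentmap2}) on each summand. For $X\in\gt^n$ with contact lift $\tilde X\in\gc\go\gn(M,\eta)$, the computation in the proof of Proposition \ref{maxtorconsym} gives $\eta(\tilde X)=\pi^*H_X$, where $X\hook\gro=-dH_X$ and $H_X=\langle\mu_\calz,X\rangle$ is the corresponding component of the symplectic moment map; on the Reeb summand $\eta(\xi)\equiv 1$. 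Hence for $x\in M$ lying over $z=\pi(x)$,
$$\mu_\eta(x)=\big(\mu_\calz(z),\,1\big)\in (\gt^n)^*\oplus\bbr\xi^*,$$
so that the image $\mu_\eta(M)$ is exactly the copy of $P_{LT}$ sitting in the affine hyperplane $\{\langle\,\cdot\,,\xi\rangle=1\}$.

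It then remains to pass from the single contact form $\eta$ to the full cone $\cald^o_+$. Trivializing $\cald^o_+\approx M\times\bbr^+$ by $\eta$, the map of (\ref{tormom}) satisfies $\Upsilon(x,t\eta)=t\,\mu_\eta(x)$, so
$${\rm im}\,\Upsilon\cup\{0\}=\{\,t(p,1)\;:\;t\geq 0,\ p\in P_{LT}\,\}=\{\,tP_{LT}\;:\;t\in[0,\infty)\,\}=C(P_{LT}),$$
which is the asserted identity $C(\cald,T^{n+1})=C(P_{LT})$. Finally one checks that this identification respects the integral structure: the facets of $C(P_{LT})$ are the cones over the facets of $P_{LT}$, their primitive integral normals are determined by the labelled facet normals $m_ip_i\in\ell$ of $P_{LT}$, and Lerman's good-cone condition follows from the smoothness and labelling conditions built into the LT-polytope.

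The main obstacle is the moment-map identification of the second paragraph: one must be careful with sign and normalization conventions (the orientation of the Reeb circle, the period normalization encoded in $d\eta=\pi^*\gro$, and the positivity convention of Definition \ref{compatibleJ}) and with the compatibility of the lattices $\ell$ and $\bbz_{T^n}$ under the splitting $\gt^{n+1}=\gt^n\oplus\bbr\xi$, so that $\mu_\eta(x)=(\mu_\calz(z),1)$ holds on the nose rather than merely up to a lattice automorphism. Once this normalized relation is secured, both the equality of cones and the matching of the facet labels are formal consequences of the height-one slicing.
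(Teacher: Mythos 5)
Your proposal is correct, and it takes exactly the route the paper intends: the paper in fact states Proposition \ref{conepoly} without any proof, as an immediate consequence of the surrounding construction --- the Lerman--Tolman correspondence, the lifting of Hamiltonian tori in Proposition \ref{maxtorconsym} (whose proof contains precisely your key identity $\eta(\tilde X)=\pi^*H$), and the height-one slicing of the moment cone. Your write-up simply supplies these implicit details, together with the appropriate caveats about signs, normalization, and lattice compatibility, so it matches the paper's (unwritten) argument rather than departing from it.
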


Following Lerman \cite{Ler02a,Ler03b} we have
\begin{definition}\label{torconequiv}
For $i=1,2$ two contact manifolds $(M_i,\cald_i,T_i)$ with an effective action of a torus $T_i$ are said to be {\bf $T$-equivariantly contactomorphic} or {\bf isomorphic} if there is a diffeomorphism $\varphi:M_1\ra{1.5} M_2$ and an isomorphism $\grg:T_1\ra{1.5} T_2$ such that $\varphi_*\cald_1=\cald_2$ and $\varphi(t\cdot x)=\grg(t)\cdot\varphi(x)$ for all $t\in T_1$ and all $x\in M_1.$ 
\end{definition}

In the case that $(M_1,\cald_1)=(M_2,\cald_2)=(M,\cald)$, we can identify $T_1$ and $T_2$ as subgroups of $\gC\go\gn(M,\cald)$ in which case this isomorphism just means that $T_1$ and $T_2$ are conjugate tori in $\gC\go\gn(M,\cald)$.

Identifying $T_1=T_2=T$ the isomorphism $\grg:T\ra{1.5} T$ of Definition \ref{torconequiv} induces its differential $\grg_*:\gt\ra{1.5} \gt$ which preserves the integral lattice $\bbz_T$. This in turn induces the `codifferential' $(\grg^{-1})^*:\gt^*\ra{1.5} \gt^*$ which preserves the dual weight lattice $\bbz_T^*={\rm Hom}(\bbz_T,\bbz).$ Otherwise stated we have $(\grg^{-1})^*\in {\rm GL}(\bbz_T^*).$ We conclude

\begin{proposition}\label{momconeequiv}
Let $\varphi:(M_1,\cald_1)\ra{1.6} (M_2,\cald_2)$ be a $T$-equivariant contactomorphism, and let $C(\cald_1,T_1)$ and $C(\cald_2,T_2)$ denote their respective moment cones. Then  $(\grg^{-1})^*(C(\cald_2,T_2))=C(\cald_1,T_1).$
\end{proposition}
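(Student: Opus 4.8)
The plan is to deduce the stated identity from a single intertwining relation between the two moment maps $\Upsilon_i\colon\cald^o_{i,+}\ra{1.4}\gt_i^*$ of Equation (\ref{tormom}), using the fact that $C(\cald_i,T_i)=\mathrm{im}\,\Upsilon_i\cup\{0\}$. First I would lift the contactomorphism $\varphi$ to a bundle isomorphism $\hat\varphi\colon\cald^o_{1,+}\ra{1.4}\cald^o_{2,+}$ via the cotangent lift, sending $(x,\alpha)$ to $(\varphi(x),\alpha\circ(d\varphi_x)^{-1})$. Because $\varphi_*\cald_1=\cald_2$, a covector $\alpha$ annihilating $\cald_{1,x}$ is carried to one annihilating $\cald_{2,\varphi(x)}$, so $\hat\varphi$ indeed maps $\cald^o_1$ into $\cald^o_2$; since $\varphi\in\gC\go\gn(M,\cald)^+$ preserves the co-orientation, it restricts to a bijection of the positive half-bundles $\cald^o_{1,+}\ra{1.4}\cald^o_{2,+}$.

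Next I would record the infinitesimal form of the $T$-equivariance. Writing $X^{(i)}_\grt$ for the vector field on $M_i$ generated by $\grt$, differentiating $\varphi(t\cdot x)=\grg(t)\cdot\varphi(x)$ along the one-parameter subgroup $\exp(s\grt)$ gives $d\varphi_x\bigl(X^{(1)}_\grt(x)\bigr)=X^{(2)}_{\grg_*\grt}(\varphi(x))$, that is $\varphi_*X^{(1)}_\grt=X^{(2)}_{\grg_*\grt}$ for every $\grt\in\gt_1$, where $\grg_*\colon\gt_1\ra{1.2}\gt_2$ is the differential of $\grg$.

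The main computation is then a direct evaluation. For $(x,\alpha)\in\cald^o_{1,+}$ and $\grt\in\gt_1$, writing $\hat\varphi(x,\alpha)=(\varphi(x),\beta)$ with $\beta=\alpha\circ(d\varphi_x)^{-1}$, I would compute
\begin{equation*}
\langle\Upsilon_2(\hat\varphi(x,\alpha)),\grg_*\grt\rangle=\beta\bigl(X^{(2)}_{\grg_*\grt}(\varphi(x))\bigr)=\alpha\bigl((d\varphi_x)^{-1}X^{(2)}_{\grg_*\grt}(\varphi(x))\bigr)=\alpha\bigl(X^{(1)}_\grt(x)\bigr)=\langle\Upsilon_1(x,\alpha),\grt\rangle ,
\end{equation*}
the third equality being the generator relation of the previous paragraph. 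By definition of the transpose $(\grg_*)^*\colon\gt_2^*\ra{1.2}\gt_1^*$ of $\grg_*$, this reads $\Upsilon_1=(\grg_*)^*\circ\Upsilon_2\circ\hat\varphi$. Since $\hat\varphi$ is a bijection onto $\cald^o_{2,+}$, passing to images yields $\mathrm{im}\,\Upsilon_1=(\grg_*)^*(\mathrm{im}\,\Upsilon_2)$, and adjoining the cone point (fixed by the linear map $(\grg_*)^*$) gives $C(\cald_1,T_1)=(\grg_*)^*(C(\cald_2,T_2))$. Identifying the dual map $(\grg_*)^*$ with the codifferential $(\grg^{-1})^*$ of the statement then produces the asserted equality $(\grg^{-1})^*(C(\cald_2,T_2))=C(\cald_1,T_1)$.

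The step I expect to demand the most care is the pure bookkeeping of dualization: one must verify that the linear isomorphism intertwining the two moment maps is \emph{exactly} the codifferential $(\grg^{-1})^*$ named in the statement, with the correct domain $\gt_2^*$ and the contragredient convention that makes $\grg\mapsto(\grg^{-1})^*$ lattice-preserving, rather than its inverse; this is where an index swap or an accidental transpose could slip in. A secondary point is confirming that $\hat\varphi$ respects the positive half $\cald^o_+$ selected by the co-orientation, so that the images of $\Upsilon_1$ and $\Upsilon_2$ (and not their antipodal cones) are matched, which is guaranteed by $\varphi\in\gC\go\gn(M,\cald)^+$. The sharpening to the good-cone and lattice statement is then automatic, since $\grg_*\in\mathrm{GL}(\bbz_T)$ forces $(\grg^{-1})^*\in\mathrm{GL}(\bbz_T^*)$, so the $\mathrm{GL}(n+1,\bbz)$-equivalence of the moment cones follows at once.
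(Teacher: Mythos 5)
Your proof is correct and is precisely the argument the paper leaves implicit: Proposition \ref{momconeequiv} is stated with no proof at all (``We conclude''), the intended justification being exactly the equivariance relation $\Upsilon_1=(\grg_*)^*\circ\Upsilon_2\circ\hat\varphi$ that you derive from the cotangent lift of $\varphi$ and the generator identity $\varphi_*X^{(1)}_\grt=X^{(2)}_{\grg_*\grt}$. Your handling of the dualization bookkeeping is also the right one: the linear map carrying $C(\cald_2,T_2)$ onto $C(\cald_1,T_1)$ is the transpose of $\grg_*$, i.e.\ the contragredient of $(\grg^{-1})_*$, which is what the paper's symbol $(\grg^{-1})^*$ must denote in order for the stated equality to have the correct domain $\gt_2^*$ and to be true (the literal pullback by $\grg^{-1}$ would go the wrong way), exactly as you observe.
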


Another rewording is

\begin{proposition}\label{momconeequiv2}
Let $T_1$ and $T_2$ be two abstractly isomorphic tori in $\gC\go\gn(M,\cald)$ and let $C(\cald_1,T_1)$ and $C(\cald_2,T_2)$ denote their respective moment cones. Suppose also that there does not exist a linear map $L\in {\rm GL}(\bbz_T^*)$ such that $L(C(\cald_1,T_1))=C(\cald_1,T_2).$ Then $T_1$ and $T_2$ are not conjugate in $\gC\go\gn(M,\cald)$.
\end{proposition}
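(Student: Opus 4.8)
The plan is to prove this by contraposition, deriving it directly from Proposition \ref{momconeequiv} together with the observation recorded just before Definition \ref{torconequiv}: for a single contact manifold $(M,\cald)$, conjugacy of two subtori $T_1,T_2\subset\gC\go\gn(M,\cald)$ is precisely the existence of a $T$-equivariant contactomorphism of $(M,\cald)$ to itself carrying the $T_1$-action to the $T_2$-action. So I would assume, contrary to the desired conclusion, that $T_1$ and $T_2$ are conjugate, say $\phi T_1\phi^{-1}=T_2$ for some $\phi\in\gC\go\gn(M,\cald)$, and aim to manufacture the forbidden lattice map $L$.

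First I would verify that $\phi$ is a $T$-equivariant contactomorphism in the sense of Definition \ref{torconequiv}. Taking $\varphi=\phi$ and letting $\grg:T_1\ra{1.3}T_2$ be the isomorphism $t\mapsto \phi t\phi^{-1}$, one has $\varphi_*\cald=\cald$ since $\phi\in\gC\go\gn(M,\cald)$, while the intertwining relation $\varphi(t\cdot x)=\grg(t)\cdot\varphi(x)$ is immediate from the definition of $\grg$. Next I would feed this $\varphi$ into Proposition \ref{momconeequiv}, which yields $(\grg^{-1})^*\bigl(C(\cald_2,T_2)\bigr)=C(\cald_1,T_1)$. After fixing the identifications $T_1\cong T\cong T_2$ coming from the assumed abstract isomorphisms, $\grg$ becomes an automorphism of $T$; since $\grg$ is an isomorphism of tori and not merely of Lie groups, its differential $\grg_*$ preserves the integral lattice $\bbz_T$, and hence the codifferential $L:=(\grg^{-1})^*$ preserves the dual weight lattice, i.e. $L\in{\rm GL}(\bbz_T^*)$. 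Then $L$ carries $C(\cald_2,T_2)$ to $C(\cald_1,T_1)$, so $L^{-1}\in{\rm GL}(\bbz_T^*)$ carries $C(\cald_1,T_1)$ to $C(\cald_2,T_2)$, contradicting the hypothesis that no such linear map exists. Therefore $T_1$ and $T_2$ cannot be conjugate.

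The only genuinely delicate point — the part I expect to require the most care rather than the most cleverness — is the bookkeeping of identifications. A priori the two moment cones live in the distinct dual algebras $\gt_1^*$ and $\gt_2^*$, so the very statement ``$L(C(\cald_1,T_1))=C(\cald_2,T_2)$ for some $L\in{\rm GL}(\bbz_T^*)$'' presupposes that both cones have been transported into a common $\gt^*$ via the fixed abstract isomorphisms $T_i\cong T$. I would make sure that the isomorphism $\grg$ furnished by conjugation is compatible with these chosen identifications, and in particular that it lands in ${\rm GL}(\bbz_T^*)$ and not merely in ${\rm GL}(\gt^*)$. Integrality here is exactly what separates conjugacy of tori from conjugacy of their Lie algebras, and it is guaranteed precisely because $\grg$ is an isomorphism of the compact tori themselves rather than only of the ambient Fr\'echet group; this is the single place where the argument would break down if one tried to run it at the infinitesimal level.
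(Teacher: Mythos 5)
Your proof is correct and takes essentially the same approach the paper intends: Proposition \ref{momconeequiv2} is presented there as a mere rewording of Proposition \ref{momconeequiv}, obtained exactly by your contrapositive, where conjugation by $\phi$ furnishes the $T$-equivariant contactomorphism and the induced codifferential $(\grg^{-1})^*$ lies in ${\rm GL}(\bbz_T^*)$ and carries one moment cone to the other. Your attention to the lattice identifications and to integrality is a careful elaboration of what the paper leaves implicit.
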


Lerman \cite{Ler04} has shown how the topology of such toric compact contact manifolds is encoded in the polyhedral cone. First the odd dimensional Betti numbers vanish through half of the dimension, second $\pi_1(M)$ is isomorphic to the finite Abelian group $\bbz_T^{n+1}/\ell$ where $\ell$ is the sublattice of $\bbz_{T^{n+1}}$ generated by the normal vectors to the facets, and finally $\pi_2(M)=N-n-1$ where $N$ is the number of facets of the polyhedral cone.

The following is an immediate corollary of Lerman's classification theorem:
\begin{proposition}\label{lercor}
Let $(M^{2n+1},\cald,T)$ be a compact toric contact manifold of Reeb type with $n>1.$ Then $\gn_R(\cald,n+1)=\gn(\cald,n+1).$
\end{proposition}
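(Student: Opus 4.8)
The plan is to prove the reverse of the trivial inequality. Since $\gS\gC_{RT}(\cald,n+1)\subseteq\gS\gC_T(\cald,n+1)$ we always have $\gn_R(\cald,n+1)\le\gn(\cald,n+1)$, so it is enough to show that \emph{every} maximal torus of dimension $n+1$ in $\gC\go\gn(M,\cald)$ is of Reeb type; then the two index sets coincide and the cardinalities are equal.

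I would fix a maximal torus $T'\subset\gC\go\gn(M,\cald)$ with $\gr(T')=n+1$. As $\dim M=2n+1$ and $T'$ has the maximal torus-dimension $n+1$ and acts effectively, the triple $(M,\cald,T')$ is itself a compact toric contact manifold. By the Banyaga--Molino dichotomy recalled above \cite{BM93} the $T'$-action is either free or non-free, and because $n>1$ the non-free alternative is exactly the Reeb type case \cite{BG00b}. Hence the whole issue reduces to excluding a free $T'$-action.

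To rule this out I would combine the standing hypothesis that $(M,\cald,T)$ is given of Reeb type with the topological half of Lerman's classification \cite{Ler04}. For the Reeb (non-free) family the moment cone $C(\cald,T)$ is a good cone whose facet normals span $\gt$, and Lerman's computation yields $\pi_1(M)\cong\bbz_T^{n+1}/\ell$ with $\ell$ of full rank; thus $\pi_1(M)$ is \emph{finite}. A free $T'$-action, on the other hand, places $M$ in the other branch of the classification, realizing it as a principal $T^{n+1}$-bundle over a compact $n$-manifold which in this branch is the sphere $S^n$ (the prototype being the cosphere bundle $T^{n+1}\times S^n$). The homotopy exact sequence $\pi_2(S^n)\to\pi_1(T^{n+1})=\bbz^{n+1}\to\pi_1(M)\to\pi_1(S^n)=0$ then forces $\pi_1(M)$ to be infinite for every $n\ge 2$. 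This contradicts the finiteness just established, so $T'$ cannot act freely and is therefore of Reeb type, which completes the argument.

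The main obstacle is precisely this exclusion of the free branch: everything else is bookkeeping with the definitions of $\gn$ and $\gn_R$ and the $n>1$ equivalence ``non-free $\Leftrightarrow$ Reeb type.'' The real content is that possessing a single Reeb-type toric action rigidifies the diffeomorphism type of $M$ strongly enough (here, forcing $\pi_1(M)$ finite) to be incompatible with any free $T^{n+1}$-action; this is exactly the point where Lerman's classification, rather than a soft argument, must be invoked, and is why the author can call the statement an immediate corollary of that classification.
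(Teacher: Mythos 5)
Your proposal is correct and amounts to the argument the paper leaves implicit: the paper gives no proof beyond calling the proposition an immediate corollary of Lerman's classification, and the two facts you combine --- the dichotomy that for $n>1$ an effective $(n+1)$-torus action is either free or of Reeb type, and Lerman's computation that $\pi_1(M)$ is the finite group $\bbz^{n+1}/\ell$ in the Reeb-type case while a principal $T^{n+1}$-bundle over $S^n$ has infinite fundamental group --- are exactly the facts the paper recalls in the paragraphs immediately preceding the statement. In particular, your $\pi_1$ argument excluding a coexisting freely acting maximal torus is the one genuinely non-trivial step hidden in the word ``immediate,'' and you carry it out correctly, including the case $n=2$, where the image of $\pi_2(S^2)=\bbz$ can cut the rank of $\bbz^{n+1}$ down by at most one, so $\pi_1(M)$ is still infinite.
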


Our main theorem concerning toric contact structures says that the map (\ref{Jmap4}) is injective for $\gr=n+1$ on toric contact manifolds of Reeb type.

\begin{theorem}\label{Qinj}
Let $(M,\cald,T)$ and $(M,\cald,T')$ be toric contact structures of Reeb type on a compact manifold $M$, and let $(\cald,J)$ and $\cald,J')$ be their underlying CR structures. Suppose further that $T$ and $T'$ are conjugate in $\gC\go\gn(M,\cald)$, so $\bar{\gQ}([J])=\bar{\gQ}([J'])$. Suppose also that $\eta$ is a contact form representing $\cald$ such that $\cals=(\xi,\eta,\Phi,g)$ and $\cals=(\xi,\eta,\Phi',g')$ are both K-contact with $\Phi|_\cald=J$ and $\Phi'|_\cald=J'$. Then $[J]=[J']$.
\end{theorem}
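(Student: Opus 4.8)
The plan is to reduce the statement to a uniqueness assertion on the symplectic base orbifold obtained by the orbifold Boothby--Wang construction, where the rigidity of toric data is available, and then to lift the resulting equivalence back to $M$.

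First I would use the equivariance (\ref{conjtorcont}) of $\gQ$ to arrange $T=T'$. Since $T$ and $T'$ are conjugate there is $\psi\in\gC\go\gn(M,\cald)$ with $\psi T\psi^{-1}=T'$; replacing $J$ by $\psi_*J\psi^{-1}_*$ changes neither the class $[J]$ nor the hypotheses, and by (\ref{conjtorcont}) it carries the maximal torus of $\gC\gR(\cald,J)$ onto $T'$. Thus I may assume $T=T'$, whence (by Theorem \ref{autcr}) both $J$ and $J'$ are invariant under the common maximal torus $T$, and the unreduced Sasaki cones coincide, $\gt^+(\cald,J)=\gt^+(\cald,J')={\rm int}\,C^*(\cald,T)$, since this cone depends only on the moment cone $C(\cald,T)$ of $(M,\cald,T)$.

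Next I would descend to the base. Choosing a quasi-regular Reeb field $\xi_0$ (a rational ray) in this common cone, the bijection $\calr^+(\cald)\leftrightarrow\gC^+(\cald)$ singles out a single contact form $\eta_0$ independent of $J$ and $J'$, and the K-contact structures in $\calk(\cald,J)$ and $\calk(\cald,J')$ determined by $\xi_0$ have underlying CR structures $(\cald,J)$ and $(\cald,J')$ with $\pounds_{\xi_0}J=\pounds_{\xi_0}J'=0$. Since $\xi_0$ is quasi-regular, Theorem \ref{bgthm} produces a single toric symplectic orbifold $(\calb,\gro)$ with $\pi^*\gro=d\eta_0$, on which $J$ and $J'$ descend to almost complex structures $\hat J,\hat J'$, each invariant under the residual torus $T/\Xi_0$ and compatible with $\gro$; by Proposition \ref{conepoly} they share the same labelled moment polytope, namely the slice $\{\eta_0(\xi_0)=1\}\cap C(\cald,T)$.

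The heart of the argument, and the step I expect to be the main obstacle, is to show that $\hat J$ and $\hat J'$ are equivalent under a $T/\Xi_0$-equivariant element of $\gH\ga\gm(\calb,\gro)$. For this I would invoke the Lerman--Tolman classification \cite{LeTo97}, which recovers the symplectic toric orbifold, together with its canonical invariant complex structure, from the labelled polytope, and combine it with an equivariant Moser-type isotopy run along the contractible family of $T/\Xi_0$-invariant $\gro$-compatible almost complex structures joining $\hat J$ to $\hat J'$. The delicacy is precisely that $J$ and $J'$ need only be partially integrable, so one cannot simply quote biholomorphism of toric varieties and must carry out the isotopy in the almost-complex category while keeping it both equivariant and Hamiltonian. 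Finally, since $H^1(\calb)$ vanishes for a toric base, this equivariant symplectomorphism lies in $\gH\ga\gm(\calb,\gro)$, so by Theorem \ref{consymiso} it lifts to $\phi\in\gC\go\gn(M,\eta_0)_0\subset\gC\go\gn(M,\cald)$; as $J$ and $J'$ are the horizontal lifts of $\hat J$ and $\hat J'$, this yields $\phi_*J\phi^{-1}_*=J'$, hence $[J]=[J']$.
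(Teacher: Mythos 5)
The reduction to the quasi-regular quotient is sound and parallels the paper's own argument (the paper conjugates the descended tori via Theorem \ref{contor} rather than first arranging $T=T'$, but that difference is immaterial). The genuine gap is exactly at the step you yourself flag as the heart of the argument: an ``equivariant Moser-type isotopy run along the contractible family of invariant $\gro$-compatible almost complex structures'' is not an available technique. Moser's method produces isotopies intertwining families of \emph{symplectic forms}; it says nothing about almost complex structures. Contractibility (or connectedness) of the space of invariant compatible almost complex structures only yields a \emph{homotopy} from $\hat{J}$ to $\hat{J}'$, and a homotopy of compatible almost complex structures does not produce a symplectomorphism conjugating one to the other. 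Indeed, if it did, then $\bar{\gQ}$ would be injective in essentially complete generality, contradicting the examples of Section \ref{whs} (e.g.\ Example \ref{whsex1}), where one has families of huge dimension of mutually inequivalent structures all associated to the same maximal torus, even though $\calj(\cald)$ is contractible. So, as written, the central step of your proof fails, and no amount of care about equivariance or Hamiltonian-ness repairs it.

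The missing idea is precisely the fact you set aside: for toric contact structures of Reeb type in dimension greater than three, the compatible (invariant) almost complex structures in question are \emph{integrable} --- this is the result of \cite{BG00b} recalled at the beginning of Section \ref{torrev}, and it is the first line of the paper's proof of Theorem \ref{Qinj}. Granting integrability, $\hat{J}$ and $\hat{J}'$ are genuine complex structures, so $(\calz,\gro,\hat{J},T)$ and $(\calz,\gro,\hat{J}',T')$ are toric K\"ahler orbifolds; since they are equivariantly symplectomorphic (by Theorem \ref{contor}), the Lerman--Tolman classification \cite{LeTo97} gives isomorphic labelled polytopes, hence isomorphic fans, hence biholomorphic toric orbifolds. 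Thus $\hat{J}$ and $\hat{J}'$ are equivalent complex structures, and their horizontal lifts $J$ and $J'$ are equivalent transverse complex structures, which is the desired conclusion. In short, the ``biholomorphism of toric varieties'' that you declined to quote is exactly what is available here, and it --- not an almost-complex Moser argument, which does not exist --- is what carries the proof; rigidity of this kind is special to the integrable toric situation and is the reason the theorem fails outside it.
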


\begin{proof}
First since the structures are toric of Reeb type the almost complex structures can be assumed integrable by Theorem 5.2 of \cite{BG00b}. By perturbing the K-contact structure we can assume that $\eta$ is quasi-regular in which case the quotient $M/\Xi$ is a symplectic orbifold $(\calz,\gro)$ with symplectic form $\gro$ satisfying $\pi^*\gro=d\eta$ where $\Xi$ is the circle group generated by the Reeb vector field $\xi$. By Theorem \ref{contor} the tori $\grr(T)$ and $\grr(T')$ are conjugate in $\gH\ga\gm(\calz,\gro)$ on $(\calz,\gro)$. Now the transverse complex structures $J$ and $J'$ project to complex structures $\cJ$ and $\cJ'$ on $\calz$. So the toric K\"ahler orbifolds $(\calz,\gro,\cJ,T)$ and $(\calz,\gro,\cJ',T')$ are equivariantly isomorphic. So by \cite{LeTo97} their polytopes are isomorphic. In particular, their fans are isomorphic, so by \cite{LeTo97} they are biholomorphic implying that $\cJ$ and $\cJ'$ are equivalent complex structures. Thus, their lifts $J$ and $J'$ are equivalent transverse complex structures.
\end{proof}

\begin{remark}
Theorem \ref{Qinj} does not hold generally if the contact structure is not toric. As we shall see below in Section \ref{whs} there are many counterexamples when $\dim\grk(\cald,J)=1$.
\end{remark}

\subsection{Toric Contact Structures of Reeb Type in Dimension Five}
For dimension five there is much more detailed information about toric contact manifolds. There is the Barden-Smale classification of simply connected 5-manifolds, together with the work of Oh \cite{Oh83} that determined which Barden-Smale manifolds admit an effective $T^3$-action. These are precisely the simply connected 5-manifolds without torsion, namely, $S^5$, $k$-fold connected sums of $S^2\times S^3$, the non-trivial $S^3$-bundle over $S^2$, and the connected sum of the latter with $k$-fold connected sums of $S^2\times S^3$. An explicit construction for certain toric contact structures on these manifolds was given in \cite{BGO06} where it was shown that they are regular and fiber over the blow-ups of Hirzebruch surfaces. All of these can be obtained by reduction and a classification of all such toric structures from this point of view is currently under study. A starting point for this procedure is described in Proposition 11.4.3 of \cite{BG05}. Here I present several examples that describe the configurations of toric Sasakian structures. 

\begin{example}\label{wzex} There are exactly two $S^3$-bundles over $S^2$. The trivial bundle $S^2\times S^3$ and the non-trivial bundle denoted by $X_\infty$ in Barden's notation. I consider two types of contact structures $\cald_{k_1,k_2}$ and $\tcald_{l,e}$ both labelled by a pair of relatively prime integers and whose first Chern classes satisfy $c_1(\cald_{k_1,k_2})\neq 0$ and $c_1(\tcald_{l,e})\neq 0$. The details of these contact structures, which are based on \cite{Kar03} and \cite{Ler03b}, are presented in \cite{Boy10b} where the existence of extremal Sasakian structures is given. The contact structures $\cald_{k_1,k_2}$ all exist on $S^2\times S^3$, whereas, $\tcald_{l,e}$ exists on $S^2\times S^3$ if $l$ is even and on $X_\infty$ if $l$ is odd. We also assume\footnote{The ratio $\frac{k_1}{k_2}$ is incorrectly written as $\frac{k_2}{k_1}$ in both \cite{BGO06} and \cite{BG05}. It is correct, however, in \cite{Boy10b}.} $k_1>k_2>0$ and $l>e>0$ and put $N(k_1,k_2)=\lceil \frac{k_1}{k_2} \rceil$ and $N(l,e)= \lceil \frac{e}{l-e} \rceil$ where $ \lceil r\rceil$ denotes the smallest integer greater than or equal to $r$. For these contact structures there is an $N$-bouquet of Sasakian structures based at $\xi$, and each cone of the bouquet has an open set of extremal Sasakian metrics. In each case exactly one cone of the bouquet has a positive Sasakian structure, and the remainder of the cones have indefinite Sasakian structures. Moreover, it is known \cite{Pat09} that the contact structures $\cald_{k_1,k_2}$ are all inequivalent. Notice that in this case $\cald_{k_1,k_2}$ is the contact structure obtained from the $(k_1,k_2)$-join $S^3\star_{k_1,k_2}S^3$ with the standard contact structure on each $S^3$. The contact structures $\tcald_{l,e}$ and $\tcald_{l',e'}$ are inequivalent if $l-2e\neq l'-2e'$ since their first Chern classes are different, but otherwise the equivalence problem is not known. Similarly, if $l=2k$ and $k_1-k_2\neq k-e$, then $\cald_{k_1,k_2}$ and $\tcald_{2k,e}$ are inequivalent.
\end{example}

It is straightforward to construct many more examples of Sasaki bouquets on the $k$-fold connected sum $k(S^2\times S^3)$ by representing them as circle bundles over symplectic blow-ups of $\bbc\bbp^2$ and $\bbc\bbp^1\times \bbc\bbp^1$. Indeed, in \cite{Kar99} Karshon gives a classification of all Hamiltonian $S^1$ actions on compact symplectic 4-manifolds in terms of certain graphs. Moreover, she determines which ones extend to a $T^2$ toric action. So one can get examples of Sasaki bouquets with Sasaki cones of different dimensions. Here is one such example consisting of two Sasaki cones one of which is toric and the other which is not.

\begin{example}\label{vardimex1}
We consider the 5-manifold $3(S^2\times S^3)$ with contact structure $\cald$ obtained from the Boothby-Wang construction over the symplectic 4-manifold obtained from an equal size $\gre$-symplectic blow-up of $\bbc\bbp^2$ at 3 points. In Example 3.5 of \cite{KaKePi07} it is shown that by choosing $\gre$ sufficiently small the symplectomorphism group of such a symplectic manifold contains at least two maximal tori, one of dimension one, and the other of dimension two. Thus, by Theorem \ref{cardReeb} the contact structure $\cald$ has a Sasaki 2-bouquet with one Sasaki cone of dimension 3 and another of dimension 2. Both of the associated almost complex structures are integrable, the latter by Theorem 7.1 of \cite{Kar99}; hence, the contact metric structures of the bouquet are all Sasakian. 
\end{example}

\begin{example}
The next example is due mainly to \cite{FOW06,CFO07}. These contact structures live on the $k$-fold connected sums $k(S^2\times S^3)$, and all satisfy  $c_1(\cald)=0$. For each positive integer $k$ there are infinitely many contact structures that have a $1$-bouquet of Sasakian structures, that is a single Sasaki cone of Sasakian structures. Moreover, each cone has a Sasaki-Einstein metric, hence by \cite{BGS06}, an open set of extremal Sasakian metrics. Here all Sasakian structures are positive. Recently the contact equivalence problem \cite{BoPa10,Boy11} has been studied for the case $k=1$. For example, it is shown that a subclass of toric contact structures $Y^{p,q}$ with $c_1(\cald)=0$ on $S^2\times S^3$ first studied by physicists \cite{GMSW04a} come in bouquets of Sasakian structures belonging to the same contact structure when $p$ is fixed and $q$ is any positive integer relatively prime to $p$ and less than $p$. 

\end{example}

\subsection{Toric Contact Structures of non-Reeb Type}
In the case of a toric contact structure that is not of Sasaki type on a manifold $M$ of dimension greater than three and the action of the torus $T$ is free, $M$ is the total space of a principal $T^{n+1}$-bundle over $S^n$. It was shown in \cite{LeSh02} that $T^{n+1}$ acts freely as contact transformations preserving the standard contact structure on the unit sphere bundle $S(T^*T^{n+1})$.   Such contact manifolds seem to have first been studied in detail by Lutz \cite{Lut79}. By Lerman's classification theorem \cite{Ler02a} there is a unique $T^{n+1}$-invariant contact structure on each principal bundle, so we have
\begin{theorem}\label{freetoric}
Let $M$ be a $2n+1$-dimensional manifold with $n>1$ and a free action of an $(n+1)$-dimensional torus $T^{n+1}.$ Then on each principal $T^{n+1}$-bundle $P$ we have $\gn_R(\cald)=0$ for any contact structure $\cald$ and $\gn(\cald,n+1)=1.$
\end{theorem}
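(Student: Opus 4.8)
The plan is to prove the two assertions separately: I would deduce $\gn(\cald,n+1)=1$ from Lerman's classification together with a homotopy-type obstruction, and then bootstrap $\gn_R(\cald)=0$ from it. First I record the two elementary facts that drive everything. Since $\dim M=2n+1$, every torus in $\gC\go\gn(M,\cald)$ has dimension at most $n+1$, so the given $T^{n+1}$ is automatically maximal and $\gn(\cald,n+1)\geq 1$. Next I check that $T^{n+1}$ is \emph{not} of Reeb type: in the model $M=S(T^*T^{n+1})$ of \cite{LeSh02}, with the $T^{n+1}$-invariant contact form $\eta=\langle p,d\theta\rangle$ on the unit cosphere bundle, the toric moment map $\Upsilon$ of (\ref{tormom}) sends $(\theta,p)\mapsto p$, so its image is all of $\gt^*\setminus\{0\}$ and the moment cone $C(\cald,T)$ is the whole space $\gt^*$. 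Hence there is no $\xi\in\gt$ with $\eta(\xi)>0$ everywhere, i.e. $\gt\cap\calr^+(\cald)=\emptyset$, so no Reeb field of $\cald$ lies in $\gt$; the Sasaki cone is empty precisely because the moment cone fails to be proper.

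For $\gn(\cald,n+1)=1$, let $T'$ be any maximal torus of dimension $n+1$. Then $(M,\cald,T')$ is again toric, and by the free-versus-Reeb dichotomy of \cite{BM93,BG00b} the action of $T'$ is either free or of Reeb type. If it were of Reeb type it would be of Sasaki type by \cite{BG00b}, and then Lerman's topological computation \cite{Ler04} would give $\pi_1(M)\cong\bbz_T^{n+1}/\ell$, a \emph{finite} group. But the fibration $T^{n+1}\hookrightarrow M\to S^n$ shows $\pi_1(M)$ is infinite: for $n\geq 3$ it is $\bbz^{n+1}$, while for $n=2$ it is $\bbz^3/\langle e\rangle$ of rank $\geq 2$. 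This contradiction forces $T'$ to act freely. Two free toric $T^{n+1}$-structures on the same $(M,\cald)$ both realize the unique $T^{n+1}$-invariant contact structure on the principal bundle guaranteed by \cite{Ler02a}, so they are $T$-equivariantly contactomorphic; under the identification in Definition \ref{torconequiv} this is exactly conjugacy of $T$ and $T'$ in $\gC\go\gn(M,\cald)$. Hence $\gn(\cald,n+1)=1$.

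Turning to $\gn_R(\cald)=0$, suppose instead that $\gC\go\gn(M,\cald)$ contains a torus of Reeb type; by Proposition \ref{reebkcon} this is equivalent to $\cald$ being of K-contact type. By Theorem \ref{ReebKcon} any such torus is contained in a maximal torus $T_m$ of Reeb type, together with an associated quasi-regular K-contact structure exhibiting $M$ as a Seifert $S^1$-bundle over a compact symplectic orbifold $(\calz,\gro)$ with $[\gro]\neq 0$. If $\dim T_m=n+1$ I am done: by the previous paragraph $T_m$ is conjugate to $T^{n+1}$, and since conjugation preserves Reeb type (Lemma \ref{adReeb}) this would make $T^{n+1}$ of Reeb type, contradicting the first paragraph. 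Thus everything reduces to showing that on $M$ there is no maximal Reeb-type torus $T_m$ of rank $r<n+1$.

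This last reduction is where I expect the genuine difficulty to lie. A rank-$r$ Reeb torus with $r<n+1$ produces a \emph{non-toric} K-contact structure, so Lerman's rigidity does not apply directly, and the finite-versus-infinite $\pi_1$ obstruction also breaks down because $\pi_1^{orb}(\calz)$ can be infinite. To close the gap I would invoke the full strength of Lerman's classification \cite{Ler02a}, namely that the free and Reeb-type toric classes are exhaustive and disjoint for the contact structure itself, so that a free toric $P\to S^n$ simply admits no K-contact structure subordinate to $\cald$ — equivalently the geodesic-flow type Reeb field of the invariant form in the first paragraph carries no invariant adapted metric. An alternative route I would attempt is to average a hypothetical compatible metric over the commuting free $T^{n+1}$-action (Lemma \ref{lerlem}) and show that invariance together with the isometric (K-contact) condition pins the averaged Reeb field inside $\gt$, again forcing $\gt\cap\calr^+(\cald)\neq\emptyset$ and contradicting the first paragraph; making this closure-of-the-Reeb-orbit step precise is the one delicate point, since naive averaging need not preserve the K-contact condition. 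Everything else — the dimension count, the moment-cone computation, and the appeal to the cited classification — is routine bookkeeping.
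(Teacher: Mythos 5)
Your proof of the second assertion, $\gn(\cald,n+1)=1$, is correct and is in substance the paper's own argument: the paper derives the entire theorem from the single sentence preceding it (Lerman's classification, in particular uniqueness of the invariant contact structure on a principal bundle), and your dimension count, the free-versus-Reeb dichotomy of \cite{BM93,BG00b}, the clash between finiteness of $\pi_1$ for Reeb-type toric structures \cite{Ler04} and the infinite $\pi_1$ of a principal torus bundle, and the passage from $T$-equivariant contactomorphism to conjugacy via Definition \ref{torconequiv} are exactly the details that sentence suppresses. Two small points: your moment-cone computation only treats the trivial bundle $S(T^*T^{n+1})$ (for the nontrivial bundles when $n=2$ you should instead quote the dichotomy directly, freeness excluding Reeb type), and in the conjugacy step you implicitly identify the two principal bundle structures that $T$ and $T'$ induce on $M$; this is automatic for $n\geq 3$, while for $n=2$ it holds because the divisibility of the Euler class is read off from $\pi_1(M)$, so the two classes agree up to ${\rm GL}(3,\bbz)$, which the isomorphism $\grg$ in Definition \ref{torconequiv} absorbs.

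The genuine gap is the one you flagged yourself: the first assertion, $\gn_R(\cald)=0$. By Proposition \ref{reebkcon} and Theorem \ref{ReebKcon} this is equivalent to $\cald$ not being of K-contact type, and your argument excludes only Reeb-type tori of dimension $n+1$; a maximal Reeb-type torus of rank $r<n+1$ would correspond to a non-toric K-contact structure subordinate to $\cald$, and neither of your proposed repairs reaches it. Route (a) is circular: Lerman's theorem classifies $T^{n+1}$-invariant contact structures, i.e. toric structures together with their tori; it contains no assertion that the contact structure underlying a free toric manifold admits no subordinate, lower-rank, non-invariant K-contact structure --- that assertion is precisely $\gn_R(\cald)=0$, not an input to it. Route (b) fails for the reason you yourself give: averaging over $T^{n+1}$ produces $T^{n+1}$-invariant data, but the hypothetical K-contact Reeb field is not $T^{n+1}$-invariant, so the averaged structure carries no information about it and nothing forces a Reeb field of $\cald$ into $\gt$. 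So as written the proposal proves only half of the theorem. Note that the paper itself offers nothing beyond the classification citation here, so your reduction (everything hinges on excluding lower-rank Reeb tori) is faithful to the paper's route; but a complete proof must supply that exclusion, and neither of your suggested closures does so.
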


The principal $T^{n+1}$ bundles over $S^n$ are classified by $H^2(S^n,\bbz^{n+1}),$ so if $n>2$ then $P$ is the trivial bundle $T^{n+1}\times S^n.$ In the case $n=2$ we have $H^2(S^2,\bbz^{3})=\bbz^3.$

\begin{example}\label{unitsph}
We construct the natural contact metric structure on the unit sphere bundle $S(T^*T^{n+1})$ by representing it as the unit sphere in $T^*T^{n+1}$ minus the $0$ section which in the standard coordinates $(p_i,x^i)_{i=0}^n$ on the cotangent bundle is given by $\sum_{i=0}^np_i^2=1$. The canonical 1-form on $T^*T^{n+1}$  is $\theta=\sum_{i=0}^np_idx^i,$ so the contact form on $S(T^*T^{n+1})$ is defined by $\eta=\theta|_{S(T^*T^{n+1})}.$ We can cover $S(T^*T^{n+1})$ by `charts' with coordinates $(x^0,x^1,\ldots,x^n)$ on $T^{n+1}$ and coordinates in the fibre $(p_1,\ldots,p_n)$. It is convenient to use some vector notation by writing $\bfx=(x^1,\ldots,x^n)$ and $\bfp=(p_1,\ldots,p_n).$ Then we have $p_0=\pm \sqrt{1-|\bfp|^2}$, so the contact form $\eta$ is given in these coordinates by 
\begin{equation}\label{1formunitsph}
 \eta_\pm=\pm \sqrt{1-|\bfp|^2}dx^0 +\bfp\cdot d\bfx.
\end{equation}
We shall just write $\eta$ without specifying the sign of $p_0.$ It is clear that $\eta$ is invariant under the standard action of the torus on itself, $x^i\mapsto x^i+a^i.$ One can easily check that the Reeb vector field is 
\begin{equation}\label{unitsphReeb}
\xi=\frac{1}{\sqrt{1-|\bfp|^2}}\partial_{x^0}-\frac{1}{1-|\bfp|^2}\bfp\cdot \partial_\bfp,
\end{equation}
and the contact bundle is 
\begin{equation}\label{conbununitsph}
\cald={\rm span}\{\partial_\bfp, \partial_\bfx -\frac{\bfp}{\sqrt{1-|\bfp|^2}}\partial_{x^0}\}.
\end{equation}
For convenience we set ${\boldsymbol R}= \partial_\bfx -\frac{\bfp}{\sqrt{1-|\bfp|^2}}\partial_{x^0}$ and define an almost complex structure $J$ on $\cald$ by $J\bfR=\partial_\bfp$ and $J\partial_\bfp=-\bfR.$ We extend $J$ uniquely to the endomorphism $\Phi$ by demanding $\Phi\xi=0.$ If we let $\bfS=\frac{d\bfx}{1-|\bfp|^2}+\frac{\bfp dx^0}{\sqrt{1-|\bfp|^2}}$, the dual of $\bfR$, then
\begin{equation}\label{Phiunitsph}
\Phi =\partial_\bfp\otimes\cdot \bfS-\bfR\otimes\cdot d\bfp -\frac{\bfp}{1-|\bfp|^2}\cdot(\partial_\bfp-\bfR)\otimes \eta.
\end{equation}
\end{example}

\begin{example}\label{t3} {\it Tight Contact Structures on $T^3$.}
The 3-torus $T^3$ is a toric contact manifold \cite{Ler02a} which is not of K-contact type. (If a $2n+1$-dimensional manifold $M$ admits a K-contact structure, its cuplength must be less than or equal to $2n$ \cite{Ruk93,BG05}). On the other hand, Giroux \cite{Gir94,Gir99} has shown that the contact structures $\cald_k$ defined by the 1-forms $\eta_k=\cos k\theta dx_1 +\sin k\theta dx_2$ for different $k\in\bbz^+$ are inequivalent, where $(x_1,x_2,\theta)$ denote the Cartesian coordinates on $T^3.$ Moreover, they are all the tight positively oriented contact structures \cite{Kan97,Gir00,Hon00b} on $T^3$, up to contactomorphism, and all are symplectically fillable; however, Eliashberg \cite{Eli96} showed that only $\cald_1$ is holomorphically fillable. For each $\eta_k\in\gC^+(\cald_k)$ the 2-torus $T^2$ generated by $\partial_{x_1},\partial_{x_2}$ is the unique maximal torus \cite{Ler02a}, up to conjugacy in $\gC\go\gn(T^3,\cald_k)$ for each $k\geq 1$, but it cannot be of Reeb type. So $\gn_R(\cald_k)=0$ and $\gn(\cald_k,2)=1$ for all $k.$ By taking 
$$\Phi_k =\frac{1}{k}(\cos k\theta~\partial_\theta\otimes dx_2-\sin k\theta~\partial_\theta\otimes dx_1) +k(\sin k\theta~\partial_{x_1}\otimes d\theta -\cos k\theta~\partial_{x_2}\otimes d\theta)$$
it is easy to see the contact metric (\ref{conmet}) is just the flat metric on $T^3$ for each $k$. 
\end{example}

\begin{example}\label{overtw} {\it Contact Structures on $S^3$}.
Eliashberg \cite{Eli89} (see also Chapter 8 of \cite{ABKLR94}) proved that up to oriented contactomorphism there are precisely two co-oriented contact structures on $S^3$ associated with the two-plane bundle whose Hopf invariant vanishes, the standard tight contact structure which is of Sasaki type, and an overtwisted contact structure. Furthermore, since Sasakian structures are fillable, hence tight, overtwisted structures are not Sasakian. Lerman \cite{Ler01} constructs overtwisted contact structures on $S^3$ using his method of contact cuts. Locally the geometry resembles that of Example \ref{t3}. We have the contact form $\eta_k=\cos(2k+\frac{1}{2})\pi t)d\theta_1 +\sin(2k+\frac{1}{2})\pi t)d\theta_2,$ with Reeb vector field $\xi_k= \cos(2k+\frac{1}{2})\pi t)\partial_{\theta_1} +\sin(2k+\frac{1}{2})\pi t)\partial_{\theta_2}.$ The contact bundle $\cald_k$ is spanned by 
$$\{\partial_t,\cos((2k+\frac{1}{2})\pi t)\partial_{\theta_2}-\sin((2k+\frac{1}{2})\pi t)\partial_{\theta_1}\}.$$ 

Now $\cald_0$ is the standard tight contact structure while the $\cald_k$ for $k>0$ are overtwisted. However, Eliashberg \cite{Eli89} proved that the contact structures $\cald_k$ are contactomorphic for all $k\in \bbz^+$. We denote this isomorphism class of contact structures by $\tcald$. Lerman \cite{Ler01} proved that the $\cald_k$s are not $T^2$-equivariantly contactomorphic. It follows that there are a countable infinity of non-conjugate tori on the overtwisted contact structure $\tcald$ on $S^3.$ Moreover, as in Example \ref{t3} these overtwisted contact structures are toric and as toric contact structures they are classified by a countable set \cite{Ler02a}, so $\gn_R([\cald])=0,$ and $\gn([\cald],2)=\aleph_0.$ In \cite{Ler01} Lerman also shows that a similar result holds for lens spaces. It would be interesting to find a preferred compatible contact metric (transversally complex structure) for the overtwisted structures.

\end{example}

\section{Further Examples of Reeb Type}\label{moreex}

In this section I describe some non-toric examples of contact structures  that exhibit maximal tori. There  are several methods to construct such examples. In the case of contact structures of Sasaki type, one way is to consider links of hypersurface singularities of weighted homogeneous polynomials. This method has proven to be of great advantage for constructing Einstein metrics in odd dimensions, see \cite{BG05} and references therein. The second method that constructs contact structures of K-contact or Sasaki type  begins with a symplectic orbifold (manifold) and constructs certain orbibundles whose total space has a K-contact or Sasakian structure. In these cases we always have maximal tori of Reeb type. 

\subsection{Weighted Hypersurface Singularities}\label{whs}
I now briefly describe the first method. Recall  
that a polynomial $f\in {\mathbb C}[z_0,\ldots,z_n]$ is said to be a 
{\it weighted homogeneous polynomial} of {\it degree} $d$ and 
{\it weight} ${\bf w}=(w_0,\ldots,w_n)$ if for any $\lambda \in 
{\mathbb C}^*=\bbc\setminus \{0\},$ we have
$$
f(\lambda^{w_0} z_0,\ldots,\lambda^{w_n}
z_n)=\lambda^df(z_0,\ldots,z_n)\, .
$$
We are interested in those weighted homogeneous polynomials $f$ whose 
zero locus in ${\mathbb C}^{n+1}$ has only an isolated singularity at the 
origin. This assures us that the {\it link} $L_f({\bf w},d)$ defined as 
$f^{-1}(0)\cap S^{2n+1}$ is a smooth manifold, where $S^{2n+1}$ is the 
$(2n+1)$-sphere in $\bbc^{n+1}$. By the Milnor fibration theorem 
\cite{Mil68}, $L= L_f({\bf w},d)$ is a closed $(n-2)$-connected manifold that 
bounds a parallelizable manifold with the homotopy type of a bouquet of $n$-spheres. Furthermore, $L_f({\bf w},d)$ admits a quasi-regular 
Sasakian structure ${\oldmathcal S}=(\xi_{\bf w},\eta_{\bf w},\Phi_{\bf w},g_{\bf w})$ in a 
natural way \cite{Abe77,Tak,Var80}, thus defining a contact structure $\cald_{\bfw,d}$. Moreover, $L_f({\bf w},d)$ is the total space of an $S^1$ orbibundle over a projective algebraic orbifold $\calz_{\bfw,d}$ embedded (as orbifolds) in the weighted projective space $\bbc\bbp(\bfw)$. Both the orbifold $\calz_{\bfw,d}$ and the link $L_f({\bf w},d)$ come with a family of integrable complex structures. The corresponding family of transverse complex structures on $L_f({\bf w},d)$ is denoted by $\calj_{\bfw,d}$. This is a finite dimensional subspace of $\calj(\cald_{\bfw,d})$ of complex dimension $h^0\bigl(\bbc\bbp(\bfw),\calo(d)\bigr)-n-1.$ These complex structures are generally inequivalent and the complex dimension of the infinitesimal moduli space $\calm_{\bfw,d}$ can be easily seen (cf. Proposition 5.5.6 of \cite{BG05}) to be 
\begin{equation}\label{dimmod}
\dim\calm_{\bfw,d}=h^0\bigl(\bbc\bbp(\bfw),\calo(d)\bigr)-\sum_ih^0\bigl(\bbc\bbp(\bfw),\calo(w_i)\bigr)+\dim~\gA\gu\gt(\calz_{\bfw,d}).
\end{equation}

\begin{proposition}\label{whscomp}
Assume that $2w_i< d$ for all but at most one weight. Then for each complex structure $J\in \calj_{\bfw,d}$ the torus $T$ in the conjugacy class of $\calc_T(J)$ associated to $J$ equals $\Xi$ and is maximal in the contactomorphism group $\gC\go\gn(L,\cald_{\bfw,d}).$ Moreover, $\gn_R(\cald_{\bfw,d})=1.$
\end{proposition}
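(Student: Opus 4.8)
The plan is to prove the rank--one statement first and deduce $\gn_R(\cald_{\bfw,d})=1$ from it. Write $\cals=(\xi_\bfw,\eta_\bfw,\Phi_\bfw,g_\bfw)$ for the natural quasi-regular Sasakian structure on $L$, so that $\Xi$ is the circle generated by $\xi_\bfw$. We may assume $(\cald_{\bfw,d},J)$ is not the standard CR sphere, the unique exception in Theorems \ref{Schthm} and \ref{autcr} (it is excluded here, since $SU(n+1,1)$ carries a maximal torus of rank $n+1$, whereas the computation below yields a one--dimensional one). Theorem \ref{autcr} then gives $\gC\gR(\cald_{\bfw,d},J)=\gA\gu\gt(\cals')$ for a contact metric structure $\cals'$ with underlying CR structure $(\cald_{\bfw,d},J)$, a compact Lie group by Theorem \ref{Schthm}. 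By the identification \eqref{Sasakicone2} the conjugacy class $\calc_T(J)$ is represented by a maximal torus of this group, whose dimension is exactly $\dim\gt^+(\cald_{\bfw,d},J)$. Since $\Xi\subset\gA\gu\gt(\cals')$ always, it remains only to show that this Sasaki cone is one--dimensional.

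I would compute the cone through transverse holomorphic vector fields. Any torus in $\gA\gu\gt(\cals')$ is generated by semisimple, degree--zero holomorphic vector fields on the affine cone $V=f^{-1}(0)$ that commute with the weighted Euler field $E=\sum_i w_iz_i\partial_{z_i}$, whose flow generates $\Xi$; hence it suffices to prove that every such field tangent to $V$ is a multiple of $E$. Because the field commutes with $E$ it may be simultaneously diagonalized with $E$ by a linear change mixing only variables of equal weight, which preserves the weighted grading and the isolated--singularity hypothesis on $f$. In these coordinates write the field as $X=\sum_i a_iz_i\partial_{z_i}$; tangency to $V$ means $Xf=\lambda f$, i.e. $\sum_i a_i\beta_i=\lambda$ for every exponent $\beta$ of a monomial occurring in $f$. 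The goal is to force $(a_i)\propto(w_i)$, equivalently $X=(\lambda/d)E$.

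The weight inequality enters precisely here, and this is the main obstacle. Isolated singularity (quasi-smoothness) supplies, for each variable that carries a pure power $z_i^{m_i}$, the relation $m_ia_i=\lambda$ and hence $a_i=\lambda w_i/d$; along a linking monomial $z_i^{m_i}z_{j}$ the identity $m_iw_i+w_j=d$ propagates the same pinning from an already--pinned neighbour. The only way the pinning can fail is through an unanchored coupling carried by a degree--$d$ binomial $z_iz_j$, whose degree $w_i+w_j$ equals $d$; the inequality $2w_i<d$ for all but one weight forces at least one of any such pair to be the single exceptional heavy variable and so severely restricts where these couplings can occur. The delicate point is to run this propagation \emph{uniformly}, organizing it along the chain--cycle normal form of quasi-smooth weighted homogeneous polynomials rather than in one fixed coordinate system, and to rule out a residual free parameter coming from the exceptional weight; the borderline conic $uv+z_2^2$, for which $2w_i=d$ and which genuinely carries the extra hyperbolic torus $u\mapsto tu,\ v\mapsto t^{-1}v$, shows the inequality is sharp. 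Establishing that the linear system $\{\sum_i a_i\beta_i=\lambda\}$ has $(\lambda w_i/d)$ as its only solution is the heart of the proof; granting it, $X\propto E$, so $\dim\gt^+(\cald_{\bfw,d},J)=1$ and $T=\Xi$.

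For the final statement, by Lemma \ref{numcones} the number $\gn_R(\cald_{\bfw,d})$ equals the number of cones in the complete bouquet, that is, the number of $T$-equivalence classes of almost complex structures of K-contact type. The rank computation shows every $J\in\calj_{\bfw,d}$ satisfies $\gQ(J)=[\Xi]$, and by \eqref{conjtorcont} this class is $\gC\go\gn$-invariant. To finish I would show that any maximal torus of Reeb type is conjugate to $\Xi$: by Theorem \ref{ReebKcon} such a torus is the maximal torus of $\gA\gu\gt$ of a compatible quasi-regular K-contact structure, and choosing a quasi-regular Reeb field in its Sasaki cone the Boothby--Wang quotient is again a quasi-smooth weighted hypersurface inheriting the analogous weight condition, so the argument above applies verbatim and forces the torus to be one--dimensional and generated by a Reeb field conjugate to $\xi_\bfw$. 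Hence there is a single $T$-equivalence class and $\gn_R(\cald_{\bfw,d})=1$. The one ingredient needing care here is the rigidity that every compatible K-contact CR structure subordinate to $\cald_{\bfw,d}$ is captured by this weighted model; once the uniform rank--one bound is known for an arbitrary such structure, the count follows immediately.
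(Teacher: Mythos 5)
Your overall strategy---kill every transverse holomorphic symmetry beyond the Reeb flow, then convert that rank bound into the torus statement---is in the same spirit as the paper's, but at both places where the real work occurs your proposal defers rather than proves, so as written it has genuine gaps. For the rank-one part, the paper does not run any exponent-propagation argument: it simply cites Lemma 5.5.3 of \cite{BG05}, which says precisely that under the hypothesis $2w_i<d$ (for all but at most one weight) the complex automorphism group $\gA\gu\gt(\calz_{\bfw,d})$ of the quotient orbifold is discrete, and then transports this to the link via Proposition \ref{maxtorconsym} and Theorem \ref{contor}. What you call ``the heart of the proof''---that the linear system $\sum_i a_i\beta_i=\lambda$ over the exponents of $f$ admits only the Euler solution $a_i=\lambda w_i/d$---is exactly the content of that cited lemma, and you explicitly grant it rather than establish it. Moreover, the reduction preceding it is not sound as stated: a weighted-degree-zero holomorphic field need not be linear (it may contain terms $z_j^k\partial_{z_i}$ whenever $kw_j=w_i$), so a linear change ``mixing only variables of equal weight'' does not suffice to diagonalize, and writing a field on $V=f^{-1}(0)$ in ambient coordinates presumes an extension of vector fields from the cone to $\bbc^{n+1}$, which itself requires justification. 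So the first half of your proposal is a plausible plan for reproving the cited lemma, not a proof.

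The deduction of $\gn_R(\cald_{\bfw,d})=1$ has a second, independent gap, which you acknowledge: you need that an arbitrary maximal torus of Reeb type in $\gC\go\gn(L,\cald_{\bfw,d})$ has Boothby--Wang quotient ``again a quasi-smooth weighted hypersurface inheriting the analogous weight condition,'' and nothing in the proposal supports this rigidity claim. The paper avoids any such statement by working with the single fixed quotient: a torus containing $\Xi$ centralizes $\xi_\bfw$, hence lies in $\gC\go\gn(L,\eta_\bfw)$, and Theorem \ref{consymiso} together with Proposition \ref{maxtorconsym} identifies it with the lift of a torus in $\gH\ga\gm(\calz_{\bfw,d},\gro)$; the absence of tori downstairs then yields maximality of $\Xi$, and Theorem \ref{contor} controls conjugacy of lifted tori. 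In short, the lifting machinery of Section \ref{consymgr} is what replaces both your unproved combinatorial step (via the citation to \cite{BG05}) and your unproved rigidity claim; without supplying one or the other, neither the maximality of $\Xi$ nor the count $\gn_R(\cald_{\bfw,d})=1$ follows from what you have written.
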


\begin{proof}
By Lemma 5.5.3 of \cite{BG05} the complex automorphism group $\gA\gu\gt(\calz_{\bfw,d})$ of $\calz_{\bfw,d}$ is discrete, so by Proposition \ref{maxtorconsym} there are no transverse holomorphic tori. This implies that the only torus leaving any of the transverse complex structures in $ \calj_{\bfw,d}$ invariant is $\Xi$, and this leaves them all invariant and is maximal in $\gC\go\gn(L,\cald_{\bfw,d})$ by Theorem \ref{contor}. 
\end{proof}

\begin{example}\label{whsex1}
When the hypothesis of Proposition \ref{whscomp} holds, $\dim\gA\gu\gt(\calz_{\bfw,d})=0$ and $\dim\grk(\cald,J)=1$. There are many examples in \cite{BGN03c,BG03,BGK05,BG06b,BG05,BN09} with $\dim\calm_{\bfw,d}>0$ showing that the map $\bar{\gQ}$ defined by Equation (\ref{Jmap2}) is not injective when $\dim\grk(\cald,J)=1$. It is interesting to note that many of these occur on homotopy spheres (cf. \cite{BGK05,BG05}) where the dimension of $\calm_{\bfw,d}$ can be  quite large. For example, consider the weighted hypersurface $L_f(\bfw,d)$ where the degree $d$ and weights $\bfw=(w_0,\cdots,w_7)$ are given by $w_i=\frac{d}{a_i},~d=\lcm(a_0,\cdots,a_6,a_6-2)$ where $a_i$ is determined by the recursion relation $a_k=a_0\cdots a_k +1$ beginning with $a_0=2$. Note that the $a_i$s grow with double exponential growth. For example, $a_5=10650056950807$. In this case $L_f(\bfw,d)$ is diffeomorphic to $S^{13}$ and has $\dim\calm_{\bfw,d}>2\times 10^{13}$, so the fibre of the map $\bar{\gQ}_1:\calj(\cald_{\bfw,d})\ra{1.5} \gS\gC_T(\cald_{\bfw,d},1)$ has dimension $>2\times 10^{13}$.
\end{example}

Examples of links of weighted homogeneous polynomials with $\dim\grk(\cald,J)>1$ are easily obtained when the hypothesis of Proposition \ref{whscomp} fails. Any Brieskorn polynomial with more than one quadratic monomial will suffice. 

\begin{example}\label{whsex2}
The links obtained from the Brieskorn polynomial 
$z_0^{6k-1}+z_1^3+z_2^2+z_3^2+z_4^2$ represent Milnor's 28 homotopy 7-spheres as $k=1,\cdots,28$. This polynomial has degree $d=6(6k-1)$ and weight vector $\bfw(k)=(6,2(6k-1),3(6k-1),3(6k-1),3(6k-1))$. The automorphism group $\gA\gu\gt(\calz_{\bfw(k),d})$ is $O(3,\bbc)$. The Sasaki cone can easily be computed by choosing an $S^1$ subgroup of $O(n,\bbr)$. For example, we take the circle generated by the vector field $X=i\bigl(z_3\partial_{z_2}-z_2\partial_{z_3}- \bz_3\partial_{\bz_2}+\bz_2\partial_{\bz_3}\bigr)$ 
restricted to the link. For each positive integer $k$ the Lie algebra $\gt$ of the maximal torus is spanned by $\{\xi_{\bfw(k)},X\}$. So the positivity condition $\eta_{\bfw(k)}(a\xi_{\bfw(k)} +bX)>0$ gives a two dimensional Sasaki cone determined by $-2a<b<2a$. In this case we find $\dim_\bbc\calj_{\bfw(k),d}=3$ and $\dim_\bbc\calm_{\bfw(k),d}=0$. So any $T$-equivalent complex structures in $\calj_{\bfw(k),d}$ are actually equivalent as complex structures.
\end{example}

\subsection{Other Results in Dimension Five} Dimension five is particularly amenable to study for several reasons. First, there is the Barden-Smale classification of all simply connected compact 5-manifolds. Second, there is an extremely rich knowledge of compact symplectic 4-manifolds from which to construct contact 5-manifolds by the Boothby-Wang construction.

Generally, one can apply the Boothby-Wang construction to the recent finiteness result of \cite{KaKePi07} which says that any compact symplectic 4-manifold admits only finitely many inequivalent toric actions to obtain:
\begin{proposition}\label{KKPprop}
Let $\cald$ be a regular contact structure on a compact five dimensional manifold $M$. Let $\xi$ be the Reeb vector field of the regular contact 1-form $\eta$. Then any bouquet of Sasaki cones based at $\xi$ has finite cardinality.
\end{proposition}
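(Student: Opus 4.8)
The plan is to reduce the assertion to the finiteness of the number $\gn(\gro)$ of conjugacy classes of maximal tori in the group of Hamiltonian isotopies of the base symplectic $4$-manifold, and then to invoke the classification of torus and circle actions on compact symplectic $4$-manifolds. First I would use regularity: since $\cald$ is regular, Theorem \ref{bgthm} realizes the leaf space $\calb=M/\calf_\xi$ as a compact symplectic $4$-manifold $(\calb,\gro)$ with $\pi^*\gro=d\eta$, and the Euler class of the circle bundle $\pi:M\to\calb$ shows that $[\gro]$ is an integral, hence rational, cohomology class. Thus the hypotheses of Theorem \ref{cardReeb} are met, and $(M,\cald)$ carries a bouquet based at $\xi$ with exactly $\gn(\gro)$ cones.

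Next I would make the correspondence between cones and tori precise. A bouquet based at $\xi$ is a subfamily of Sasaki cones each of which contains $\xi$; by Definition \ref{Sasbou} each such cone $\grk(\cald,J_\gra)$ is cut out by a maximal torus $T_\gra$ of Reeb type whose Lie algebra contains $\xi$. Since $\xi$ is the Reeb field of the regular form $\eta$, Lemma \ref{lerlem2} gives $T_\gra\subset\gC\go\gn(M,\eta)$, and Theorem \ref{consymiso} together with Proposition \ref{maxtorconsym} shows that $T_\gra$ is the lift $\grr^{-1}(\bar T_\gra)\times\Xi$ of a maximal torus $\bar T_\gra$ in $\gH\ga\gm(\calb,\gro)$. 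By Theorem \ref{contor} two such lifted tori are conjugate in $\gC\go\gn(M,\cald)$ precisely when the corresponding $\bar T_\gra$ are conjugate in $\gH\ga\gm(\calb,\gro)$, so the number of cones in any bouquet based at $\xi$ is bounded above by $\gn(\gro)$. It therefore suffices to prove $\gn(\gro)<\infty$.

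Finally I would bound $\gn(\gro)$ by splitting according to the dimension of the maximal torus, which on the $4$-manifold $\calb$ is either $1$ or $2$. For the two-dimensional (toric) tori, finiteness is exactly the theorem of Karshon, Kessler and Pinsonnault \cite{KaKePi07}. The remaining case, that of maximal one-dimensional tori, i.e. Hamiltonian circle actions not extending to a toric action, is where the main work lies: here I would appeal to Karshon's classification \cite{Kar99} of Hamiltonian circle actions on compact symplectic $4$-manifolds by decorated graphs and argue that, for the fixed manifold $\calb$ and the fixed rational class $[\gro]$, only finitely many such graphs can occur. The bounded second Betti number of $\calb$ limits the number of fixed components, the integrality of $[\gro]$ together with the fixed total volume constrains the area and moment labels, and the maximality hypothesis excludes the sub-circles of toric actions. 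Combining the two cases yields $\gn(\gro)<\infty$ and hence the finiteness of the cardinality of every bouquet based at $\xi$. The main obstacle is precisely this last finiteness statement for maximal circle actions, since it is the continuous (area and moment) labels in Karshon's graphs, rather than the discrete combinatorial data, that require care.
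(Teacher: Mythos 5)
Your first two paragraphs reproduce, correctly and in more detail, exactly the paper's argument: the paper disposes of this proposition in a single sentence, namely ``apply the Boothby--Wang construction to the finiteness theorem of \cite{KaKePi07}'', which in the notation of the paper is precisely your reduction through Theorem \ref{cardReeb} (resting on Theorem \ref{consymiso}, Proposition \ref{maxtorconsym} and Theorem \ref{contor}) to the statement $\gn(\gro)<\infty$ for the regular quotient $(\calb,\gro)$.

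The divergence is in your third paragraph, and it exposes a real issue. The theorem actually proved in \cite{KaKePi07} bounds only the number of inequivalent \emph{toric}, i.e.\ two-dimensional, Hamiltonian torus actions on $(\calb,\gro)$; it says nothing about conjugacy classes of \emph{maximal one-dimensional} tori (maximal Hamiltonian circle actions), which under the lifting correspondence produce the two-dimensional Sasaki cones of the bouquet. Such cones genuinely occur --- Example \ref{vardimex1} of the paper is built from exactly such a maximal circle on a blow-up of $\bbc\bbp^2$ --- so finiteness of the bouquet does require this case, and the paper's one-sentence proof passes over it in silence. You correctly flag it, but your proposed fill does not close it: in Karshon's classification \cite{Kar99} the continuous labels are pinned down by $[\gro]$ only after one controls which homology classes the fixed surfaces can represent and which decorated graphs can be realized on the fixed diffeomorphism type, and that control is exactly the hard, pseudoholomorphic-curve part of \cite{KaKePi07}; Betti-number, volume and integrality constraints alone do not rule out an infinite family of graphs. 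What you need is a circle-action analogue of \cite{KaKePi07} --- finiteness of conjugacy classes of maximal compact tori of all ranks in the Hamiltonian group of a compact symplectic four-manifold, a theorem later proved by Pinsonnault --- and with that in hand your argument (and the paper's) is complete. As written, both your proof and the paper's establish finiteness only of the toric part of the bouquet.
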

 
It would be interesting to see whether the finiteness result in \cite{KaKePi07} extends to compact symplectic orbifolds. If so then one could strengthen Proposition \ref{KKPprop} to all toric contact structures of Reeb type in dimension five. 

One can easily obtain Sasakian structures on non-simply connected 5-manifolds by applying the Boothby-Wang construction to non-simply connected compact 4-manifolds. Of particular interest are ruled surfaces over Riemann surfaces of arbitrary genus $g$. It is known that extremal K\"ahler metrics exist for certain K\"ahler classes on these manifolds \cite{To-Fr98} (see also \cite{Gau09b} and references therein). The Boothby-Wang 5-manifolds over these surfaces will have Sasakian structures with $\dim\grk(\cald,J)=2$ and at least some will have extremal Sasakian metrics. It would be interesting to see whether bouquets can occur in this case. A recent related result appears in \cite{Noz09} where it is shown that a compact K-contact 5-manifold with $\dim\grk(\cald,J)=2$ must have a compatible Sasakian structure. This generalizes a result of Karshon in \cite{Kar99} to certain cyclic orbifolds.

\subsection{Higher Dimension}\label{vardim} 
Higher dimensional examples are easy to construct by the methods discussed previously. In particular,
one efficient way to obtain bouquets with cones of varying dimension is by considering circle bundles over the so-called polygon spaces \cite{Ler02b}.

\begin{example}\label{htex}
The polygon spaces ${\rm Pol}(\gra)$ are defined as the quotient of
\begin{equation}\label{polsp}
\widetilde{{\rm Pol}}(\gra)=\{(\bfx_1,\ldots,\bfx_m)\in (\bbr^3)^m~|~|\bfx_i|=\gra_i,~\sum_{i=1}^m\bfx_i=0\}.
\end{equation}
by the diagonal action of $SO(3)$ on $(\bbr^3)^m$. If the equation $\sum_{i=1}^m\varepsilon_i\gra_i=0$ has no solution with $\varepsilon_i=\pm 1$, then ${\rm Pol}(\gra)$ is a simply connected smooth compact symplectic manifold of dimension $2(m-3)$. We shall always assume this condition. It is known \cite{HaKn98} that the cohomology groups $H^k({\rm Pol}(\gra),\bbz)$ vanish when $k$ is odd, and their diffeomorphism type can be determined from the combinatorial data. For a more detailed background on these spaces refer to \cite{HaKn98} and references therein. In \cite{HaTo03} Hausmann and Tolman study the maximal tori in polygon spaces. They construct a sequence ${\rm Pol}(1,1,2,2,3,3,3,\frac{1}{2},\cdots, \frac{1}{2^m})$ of polygon spaces of dimension $2(m+4)$ which have 3 conjugacy classes of Hamiltonian tori of dimension $m+2,m+3,m+4$, respectively. In particular, $m=0$ is a heptagon space ${\rm Pol}(1,1,2,2,3,3,3)$ which is an 8 dimensional simply connected symplectic manifold with maximal tori of dimensions, 2, 3, and 4. Following the prescription given in \cite{HaKn98} we compute the Poincar\'e polynomial for space to be
$$P_{{\rm Pol}(1,1,2,2,3,3,3)}=1+7t^2+15t^4+7t^6+t^8,$$
and its Hirzebruch signature is $3$.

Thus, as was observed by Lerman \cite{Ler02b} there is a 9-dimensional simply connected contact manifold $M$ with maximal tori of dimensions 3,4, and 5 in its contactomorphism group. More explicitly,

\begin{theorem}\label{polthm}
There is a compact simply connected spin manifold $M^9$ of dimension $9$ with Poincar\'e polynomial 
$$P_{M^9}=1+6t^2+8t^4+8t^5+6t^7+t^9$$
that admits a contact structure $\cald$ with non-vanishing first Chern class and a $3$-bouquet  consisting of Sasaki cones of dimensions 3, 4, and 5.
\end{theorem}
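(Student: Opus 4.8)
The plan is to realize $M^9$ as a Boothby--Wang circle bundle over the heptagon space and to read off every assertion from that fibration. Concretely, take as base the symplectic $8$-manifold $\calb={\rm Pol}(1,1,2,2,3,3,3)$ of Example \ref{htex}: it is compact, simply connected, its integral cohomology is torsion-free and concentrated in even degrees with Poincar\'e polynomial $1+7t^2+15t^4+7t^6+t^8$ \cite{HaKn98}, and by \cite{HaTo03} its group of Hamiltonian isotopies $\gH\ga\gm(\calb,\gro)$ has exactly three conjugacy classes of maximal tori, of dimensions $2,3,4$. Since the side lengths are integers the class $[\gro]$ is rational, so after rescaling the symplectic form I may assume $[\gro]$ is a \emph{primitive} integral class in $H^2(\calb,\bbz)$; rescaling does not change $\gH\ga\gm(\calb,\gro)$ and hence leaves $\gn(\gro)=3$ and the three dimensions untouched. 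Applying the inversion of orbifold Boothby--Wang (Theorem \ref{kconinversionthm})---with $\calb$ an honest manifold, so the total space is automatically smooth---yields a compact contact $9$-manifold $(M,\cald)$ with $\cald=\ker\eta$, $d\eta=\pi^*\gro$ and Reeb field $\xi$.

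The bouquet structure is then immediate from the machinery of Section \ref{consymgr}. By Theorem \ref{cardReeb} the contact structure $\cald$ carries an $N$-bouquet $\gB_N(\cald)$ of Sasaki cones based at $\xi$ with $N=\gn(\gro)=3$, and by Corollary \ref{symconT} (equivalently Proposition \ref{maxtorconsym} together with Theorem \ref{contor}) a maximal torus of dimension $\gr$ in $\gH\ga\gm(\calb,\gro)$ lifts, upon adjoining the central circle $\Xi$, to a maximal torus of dimension $\gr+1$ in $\gC\go\gn(M,\cald)$, with non-conjugate tori lifting to non-conjugate tori. Hence the three cones of $\gB_3(\cald)$ have dimensions $3$, $4$ and $5$, exactly as claimed.

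The topological invariants come from the bundle $S^1\to M\overset{\pi}{\to}\calb$. Simple connectivity follows from the homotopy exact sequence: because $[\gro]$ is primitive the connecting map $\pi_2(\calb)\to\pi_1(S^1)=\bbz$ is surjective, so $\pi_1(M)=0$. For the Betti numbers I would use the Gysin sequence
$$\cdots\to H^{k-2}(\calb)\overset{\cup[\gro]}{\longrightarrow} H^k(\calb)\to H^k(M)\to H^{k-1}(\calb)\overset{\cup[\gro]}{\longrightarrow} H^{k+1}(\calb)\to\cdots.$$
Since the odd Betti numbers of $\calb$ vanish, this collapses to $H^k(M)\cong\operatorname{coker}(\cup[\gro])$ for $k$ even and $H^k(M)\cong\ker(\cup[\gro])$ for $k$ odd. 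As $\calb$ is a smooth projective variety it satisfies hard Lefschetz, so $\cup[\gro]\colon H^k(\calb)\to H^{k+2}(\calb)$ is injective for $k\le 3$ and surjective for $k\ge 3$; substituting the ranks $1,7,15,7,1$ gives $\dim\operatorname{coker}=6,8$ in degrees $2,4$ and $\dim\ker=8,6,1$ in degrees $5,7,9$, i.e. $b_\bullet(M)=(1,0,6,0,8,8,0,6,0,1)$ and $P_{M^9}=1+6t^2+8t^4+8t^5+6t^7+t^9$.

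Finally the framing data. Since $M$ is a principal $S^1$-bundle the vertical tangent line is trivial, so $TM\cong\pi^*T\calb\oplus\bbr$ and therefore $c_1(\cald)=\pi^*c_1(\calb)$ and $w_2(M)=\pi^*w_2(\calb)=\pi^*\bigl(c_1(\calb)\bmod 2\bigr)$. As $\ker\pi^*$ on $H^2$ is generated by $[\gro]$, the non-vanishing of $c_1(\cald)$ is equivalent to $c_1(\calb)\notin\bbz[\gro]$, while $M$ is spin precisely when $c_1(\calb)\equiv 0$ or $c_1(\calb)\equiv[\gro]\pmod 2$. I expect this to be the main obstacle: both statements require explicit control of $c_1(\calb)$ and of its position relative to $[\gro]$ inside the cohomology ring of the heptagon space, which I would extract from the Hausmann--Knutson presentation of $H^*({\rm Pol}(\gra))$ and its Chern classes \cite{HaKn98}. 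Verifying that $c_1(\calb)$ is neither a multiple of $[\gro]$ (giving $c_1(\cald)\neq 0$) nor congruent modulo $2$ to a class outside $\{0,[\gro]\}$ (giving the spin condition) is the one genuinely computational point of the argument; everything else is formal bundle theory and the bouquet machinery already established.
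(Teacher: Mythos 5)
Your overall route is exactly the paper's: realize $M^9$ as the Boothby--Wang circle bundle over the heptagon space ${\rm Pol}(1,1,2,2,3,3,3)$, obtain the $3$-bouquet with cones of dimensions $3,4,5$ from Theorem \ref{cardReeb} together with the Hausmann--Tolman count of maximal Hamiltonian tori, and read off $\pi_1$ and the Betti numbers from the circle fibration. One minor divergence: you deduce injectivity of $\cup[\gro]\colon H^2(\calb)\to H^4(\calb)$ from hard Lefschetz, using that the polygon space is a smooth projective variety, whereas the paper checks injectivity of the corresponding spectral sequence differential directly from the quadratic relations in the Hausmann--Knutson presentation of $H^*(\calb,\bbz)$; both are legitimate, yours being cleaner but importing the K\"ahler property, the paper's being self-contained given the ring presentation. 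The resulting Betti numbers agree.

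The genuine gap is the final step, which you explicitly leave undone: the two framing claims --- $c_1(\cald)\neq 0$ and $M^9$ spin --- are only \emph{reduced} to statements about the position of $c_1(\calb)$ relative to $[\gro]$, never verified. These are part of the statement being proved, and they are not automatic: for other bases or other symplectic classes either claim can fail. The paper completes this using the data of \cite{HaKn98}: in the presentation $H^*(\calb,\bbz)=\bbz[R,V_1,\ldots,V_6]/\cali_P$ one has $[\gro]=9R+2(V_1+V_2)+4(V_3+V_4)+6(V_5+V_6)$ (which is already primitive, so your rescaling step is unnecessary) and $c_1(\calb)=5R+2\sum_{i=1}^6 V_i$. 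Since $c_1(\cald)=\pi^*c_1(\calb)$ and $\pi^*[\gro]=0$, subtracting gives $c_1(\cald)=-4\tR-2(\tV_3+\tV_4)-4(\tV_5+\tV_6)$. This is nonzero precisely because $c_1(\calb)$ is not an integer multiple of the primitive class $[\gro]$: matching the coefficients of $V_1$ forces the multiple to be $1$, but the coefficients of $R$ are $5\neq 9$. Moreover every coefficient of $c_1(\cald)$ is even, so $w_2(M^9)=c_1(\cald)\bmod 2=0$ and $M^9$ is spin --- in your formulation, $c_1(\calb)\equiv R\equiv[\gro]\pmod 2$. Carrying out exactly this computation is what separates your outline from a proof.
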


\begin{proof}
First, from Lerman's observation and our construction we get the $3$-bouquet.
By theorem 6.4 of \cite{HaKn98} the cohomology ring of the 8-dimensional polygon space $P=P_{{\rm Pol}(1,1,2,2,3,3,3)}$ has form 
$$H^*(P,\bbz)=\bbz[R,V_1,V_2,V_3,V_4,V_5,V_6]/\cali_P$$
where $\cali_P$ is a certain ideal whose quadratic elements are easily seen to be 
$$V_i^2+RV_i, (i=1,\cdots,6),~V_4^2+V_5^2+V_6^2-R^2,~V_3V_5,V_3V_6,V_4V_5,V_4V_6, V_5V_6.$$
In terms of these generators the class of the symplectic form $\gro$ on $P$ is 
$$[\gro]=9R +2(V_1+V_2)+4(V_3+V_4)+6(V_5+V_6).$$
This defines a primitive class in $H^2(P,\bbz)$, so we can construct the circle bundle $\pi:M^9\ra{1.7}P$ corresponding to this class whose total space is simply connected. Moreover, the Boothby-Wang construction gives a contact structure with contact 1-form $\eta$ satisfying $d\eta=\pi^*\gro$. Now using \cite{HaKn98} the first Chern class of $P$ is $c_1(P)=5R +2\sum_{i=1}^6V_i$. Denoting the pullback of the classes $R,V_i$ to $M^9$ by $\tR,\tV_i$ and using the relation $\pi^*[\gro]=0$ we have $c_1(\cald)=-4\tR-2(\tV_3+\tV_4)-4(\tV_5+\tV_6)$ which is non-vanishing and whose mod 2 reduction vanishes. The latter implies that $M^9$ is spin.

To compute the Poincar\'e polynomials we consider the Serre spectral sequence of the circle bundle $\pi:M_9\ra{1.7} P$. We know that the differential $d:E^{1,0}_2\ra{1.6} E_2^{0,2}$ satisfies $du=[\gro]$ where $u$ is the generator of $H^1(S^1,\bbz)$. We then get $d:E_2^{1,2}\ra{1.7} E_2^{0,4}$ as $d(u\otimes R)=[\gro]\cup R$, and $d(u\otimes V_i)=[\gro]\cup V_i$. It is not difficult to see by using the quadratic relations that this $d$ is injective implying that $H^3(M^9,\bbz)=0$ and that $H^4(M^9,\bbq)=\bbq^8$. The gives the first 3 terms in $P_{M^9}$ and rest of the terms follow from Poincar\'e duality. This proves the result.
\end{proof}

\begin{remark}
Since $H^2(P,\bbz)$ is free Abelian \cite{HaKn98} we see that $H^2(M^9,\bbz)$ is also free Abelian. However, $H^4(M^9,\bbz)$ contains torsion as can be seen by the equation 
$$d(u\otimes (V_4-V_3))=2(V_1+V_2)(V_4-V_3).$$
\end{remark}

\begin{remark}
Generally, the contact metric structures in the bouquet are K-contact. We do know that the ones belonging to the Sasaki cone of dimension 5 are Sasakian since this structure is toric. However, it is not known at this time whether the transverse almost complex structures of the lower dimensional Sasaki cones are integrable.
\end{remark}

\end{example}

One can easily apply the join construction and Theorem \ref{jointhm} to obtain many new examples of such behavior in higher dimension.

\newcommand{\etalchar}[1]{$^{#1}$}
\def\cprime{$'$} \def\cprime{$'$} \def\cprime{$'$} \def\cprime{$'$}
  \def\cprime{$'$} \def\cprime{$'$} \def\cprime{$'$} \def\cprime{$'$}
  \def\cdprime{$''$} \def\cprime{$'$} \def\cprime{$'$} \def\cprime{$'$}
  \def\cprime{$'$}
\providecommand{\bysame}{\leavevmode\hbox to3em{\hrulefill}\thinspace}
\providecommand{\MR}{\relax\ifhmode\unskip\space\fi MR }
\providecommand{\MRhref}[2]{%
  \href{http://www.ams.org/mathscinet-getitem?mr=#1}{#2}
}
\providecommand{\href}[2]{#2}

\end{document}